\newcommand{\pFq}[5]{\ensuremath{{}_{#1}F_{#2} \left( \genfrac{}{}{0pt}{}{#3}
{#4} \bigg| {#5} \right)}}
\newcommand{\df}{\dfrac}
\newcommand{\tf}{\tfrac}
 \renewcommand{\a}{\alpha}
\renewcommand{\b}{\beta}
\newcommand{\e}{\epsilon}
\newcommand{\g}{\gamma}
\newcommand{\G}{\Gamma}
\renewcommand{\l}{\lambda}
\renewcommand{\(}{\left\(}
\renewcommand{\)}{\right\)}
\renewcommand{\[}{\left\[}
\renewcommand{\]}{\right\]}
\let\dotlessi=\i
\let\dotlessi=\i
\numberwithin{equation}{section}
 \theoremstyle{plain}
\newtheorem{theorem}{Theorem}[section]
\newtheorem{corollary}[theorem]{Corollary}
\newtheorem{remark}[]{Remark}
\def\proof{\@ifnextchar[{\@oproof}{\@nproof}}
\def\@oproof[#1][#2]{\trivlist\item[\hskip\labelsep\textit{#2 Proof of\
#1.}~]\ignorespaces}
\def\@nproof{\trivlist\item[\hskip\labelsep\textit{Proof.}~]\ignorespaces}
\begin{document}
\title[Analogue of a Fock-type integral arising from electromagnetism]{Analogue of a Fock-type integral arising from electromagnetism and its applications in number theory}
\author{Atul Dixit and Arindam Roy}\thanks{2010 \textit{Mathematics Subject Classification.} Primary 11M06, 33E20; Secondary 33C10.\\
\textit{Keywords and phrases.} Bessel functions, generalized sum-of-divisors function, Vorono\"{\dotlessi} summation formula, analytic continuation}
\address{Discipline of Mathematics, Indian Institute of Technology Gandhinagar, Palaj, Gandhinagar 382355, Gujarat, India}\email{adixit@iitgn.ac.in}
\address{Department of Mathematics, University of North Carolina at Charlotte, 9201 University City Blvd., Charlotte, NC 28223, USA} \email{aroy15@uncc.edu}
\begin{abstract}
Closed-form evaluations of certain integrals of $J_{0}(\xi)$, the Bessel function of the first kind, have been crucial in the studies on the electromagnetic field of alternating current in a circuit with two groundings, as can be seen from the works of Fock and Bursian, Schermann etc. Koshliakov's generalization of one such integral, which contains $J_s(\xi)$ in the integrand, encompasses several important integrals in the literature including Sonine's integral. Here we derive an analogous integral identity where $J_{s}(\xi)$ is replaced by a kernel consisting of a combination of $J_{s}(\xi)$, $K_{s}(\xi)$ and $Y_{s}(\xi)$ that is of utmost importance in number theory. Using this identity and the Vorono\"{\dotlessi} summation formula, we derive a general transformation relating infinite series of products of Bessel functions $I_{\l}(\xi)$ and $K_{\l}(\xi)$ with those involving the Gaussian hypergeometric function. As applications of this transformation, several important results are derived, including what we believe to be a corrected version of the first identity found on page $336$ of Ramanujan's Lost Notebook.
\end{abstract}
\maketitle

\section{Introduction}\label{intro}
In his famous memoir on the propagation of waves in wireless telegraphy,  Sommerfeld \cite{sommerfeld} developed a method of integral representation for calculating the electromagnetic field on a flat boundary where the solution sought is expressed in terms of a Fourier integral consisting of Bessel functions. The Sommerfeld integral, given by \cite[p.~366]{praus}
\begin{equation}\label{popovintfock}
\int_{0}^{\infty}tJ_{0}(\rho t)\frac{e^{-a\sqrt{t^2+\xi^2}}}{\sqrt{t^2+\xi^2}}\, dt=\frac{e^{-\xi\sqrt{a^2+\rho^2}}}{\sqrt{a^2+\rho^2}},
\end{equation}
is valid for Re$(a)>|$Im$(\rho)|$ and Re$(\xi)>0$, and is actually a special case of \cite[p.~416, Equation (2)]{watson-1966a} (see also \cite[p.~693, formula 6.596.7]{grn})
\begin{equation}\label{grn693}
\int_{0}^{\infty}t^{s+1}J_{s}(\rho t)\frac{K_{\nu}\left(a\sqrt{t^2+\xi^2}\right)}{(t^2+\xi^2)^{\nu/2}}\, dt=\frac{\rho^{s}}{a^{\nu}}\left(\frac{\sqrt{a^2+\rho^2}}{\xi}\right)^{\nu-s-1}K_{\nu-s-1}\left(\xi\sqrt{a^2+\rho^2}\right),
\end{equation}
where\footnote{These conditions, which are more general than the ones given in \cite[p.~416, Equation (2)]{watson-1966a} and \cite[p.~693, formula 6.596.7]{grn}, are established in Section \ref{ex}.} Re$(a)>|$\textup{Im}$(\rho)|$, $|\arg\rho|<\pi$, Re$(s)>-1$, and Re$(\xi)>0$. Here $J_{s}(\xi)$, the Bessel function of the first kind of order $s$, is defined by \cite[p.~40]{watson-1966a}
\begin{align}\label{sumbesselj}
	J_{s}(\xi):=\sum_{m=0}^{\infty}\frac{(-1)^m(\xi/2)^{2m+s}}{m!\Gamma(m+1+s)}, \quad |\xi|<\infty,
	\end{align}
and $K_{s}(\xi)$ is the modified Bessel function of the second kind of order $s$ defined by \cite[p.~78, eq.~(6)]{watson-1966a},
\begin{equation*}
K_{s}(\xi):=\frac{\pi}{2}\frac{\left(I_{-s}(\xi)-I_{s}(\xi)\right)}{\sin\pi s},
\end{equation*}
where $I_{s}(\xi)$ is the modified Bessel function of the first kind of order $s$ given by \cite[p.~77]{watson-1966a}
\begin{equation}\label{besseli}
I_{s}(\xi):=
\begin{cases}
e^{-\frac{1}{2}\pi s i}J_{s}(e^{\frac{1}{2}\pi i}\xi), & \text{if $-\pi<$ arg $\xi\leq\frac{\pi}{2}$,}\\
e^{\frac{3}{2}\pi s i}J_{s}(e^{-\frac{3}{2}\pi i}\xi), & \text{if $\frac{\pi}{2}<$ arg $\xi\leq \pi$}.
\end{cases}
\end{equation}
Seventeen years after Sommerfeld's aforementioned work, Fock and Bursian \cite[p.~361--363]{fockbursian} encountered integrals of the type of \eqref{popovintfock} in their study on the electromagnetic field of alternating current in a circuit with two groundings. Their study rested on the following important result \cite[Equation (20)]{fockbursian} valid for Re$(z)>$ Re$(w)\geq 0$ and Re$(\nu)>-3/4$:
\begin{align}\label{fock}
\int_{0}^{\infty}tJ_{0}(\rho t)\left(\frac{\sqrt{t^2+z^2}-\sqrt{t^2+w^2}}{\sqrt{t^2+z^2}+\sqrt{t^2+w^2}}\right)^{\nu}\frac{\, dt}{\sqrt{t^2+z^2}\sqrt{t^2+w^2}}=I_{\nu}\left(\frac{\rho(z-w)}{2}\right)K_{\nu}\left(\frac{\rho(z+w)}{2}\right).
\end{align}
Identity \eqref{fock} was given without proof in \cite{fockbursian} and it was claimed there that its proof will be published later. Seven years later, Fock proved it in his paper \cite{fock}. The integrals of Fock have been successfully used to explore the electrical properties of earth's subsurface, for example, see \cite{praus}. See also Schermann's work on electromagnetism \cite[Equation (11)]{schermann} which makes use of \eqref{fock}.

Popov \cite{popovfock} obtained a short proof of \eqref{fock} and Koshliakov \cite{kosh34} obtained its remarkable generalization, namely, for Re$(s)>-1$, Re$\left(s+2\nu+\tfrac{3}{2}\right)>0$ and Re$(z)>$ Re$(w)\geq0$,
\begin{align}\label{koshfock}
&\int_{0}^{\infty}t^{s+1}J_{s}(\rho t)\left(\frac{\sqrt{t^2+z^2}-\sqrt{t^2+w^2}}{\sqrt{t^2+z^2}+\sqrt{t^2+w^2}}\right)^{\nu}\frac{1}{\sqrt{t^2+z^2}
\sqrt{t^2+w^2}}\left(\frac{1}{\sqrt{t^2+z^2}}+\frac{1}{\sqrt{t^2+w^2}}\right)^{2s}\nonumber\\
&\quad\times\pFq21{\nu-s, -s}{\nu+1}{\left(\frac{\sqrt{t^2+z^2}-\sqrt{t^2+w^2}}{\sqrt{t^2+z^2}+\sqrt{t^2+w^2}}\right)^{2}}\, dt\nonumber\\
&=\frac{\G(\nu+1)}{\G(\nu+s+1)}(2\rho)^{s}I_{\nu}\left(\frac{\rho(z-w)}{2}\right)K_{\nu}\left(\frac{\rho(z+w)}{2}\right),
\end{align}
where the hypergeometric function $_2F_1$ is defined by
\begin{equation*}
_2F_1\left(\begin{matrix} a,b\\c\end{matrix}\,\Bigr|\, \xi\right):=\sum_{n=0}^{\infty}\df{(a)_n(b)_n}{(c)_nn!}\xi^n,\qquad |\xi|<1,
\end{equation*}
with $(a)_n:=a(a+1)\cdots(a+n-1)$.

Indeed, if we let $s=0$ in \eqref{koshfock}, we recover \eqref{fock}. As Koshliakov remarks \cite[p.~146]{kosh34}, the above integral contains many important integral evaluations apart from \eqref{fock}. For example, if we let $s=\nu$, divide both sides of \eqref{koshfock} by $(z-w)^{\nu}$ and then let $z\to w$, he records that one obtains Sonine's integral, namely, for Re$(\rho)>0$, Re$(w)>0$ and Re$(\nu)>-1/2$,
\begin{equation}\label{sonine}
\int_{0}^{\infty}\frac{x^{\nu+1}J_{\nu}(\rho x)}{(x^2+w^2)^{2\nu+1}}\, dx=\left(\frac{\rho}{2\sqrt{w}}\right)^{2\nu}\frac{K_{\nu}(\rho w)}{\G(2\nu+1)}.
\end{equation}

In a recent paper \cite[Theorem 1.6]{bdkz}, Berndt, Kim, Zaharescu and one of the present authors successfully employed \eqref{koshfock} in the Vorono\"{\dotlessi} summation formula for $r_k(n)$, the number of representations of $n$ as sum of $k$ squares, where $k\geq2$, to obtain an important transformation which gives, as corollaries, several important results in analytic number theory, for example, those of Dixon and Ferrar \cite[Equation (3.12)]{dixfer2}, Hardy \cite[Equation (2.12)]{hardyqjpam1915} and Popov \cite[Equation (6)]{popov1935}.

Loosely speaking, the Vorono\"{\dotlessi} summation formula provides an avenue to investigate the order of magnitude of the error term in the asymptotic estimates of the summatory function of the product of an arithmetic function $a(n)$ and a function $f$ by linking it with certain infinite series whose summand is an integral transform of $f$ with respect to a certain kernel. For different arithmetic functions $a(n)$, the function $f$ needs to satisfy corresponding hypotheses which validate these summation formulas.

The Vorono\"{\dotlessi} summation formula for $a(n)=r_2(n)$ is given in \cite[p.~274]{landau} (or \cite[Thm.~A]{dixfer2}), whereas for $k\geq 2$, it is given by Popov \cite[Equation (3)]{popov}, and in an equivalent form by Guinand \cite[p.~264]{guinandconcord}. This equivalent form is
\begin{align}\label{guirknsuminfallk}
&\sum_{n=1}^{\infty}r_k(n)n^{\frac{1}{2}-\frac{k}{4}}f(n)-\frac{\pi^{\frac{k}{2}}}{\G(\frac{k}{2})}\int_{0}^{\infty}
x^{\frac{k}{4}-\frac{1}{2}}f(x)\, dx\nonumber\\
&=\sum_{n=1}^{\infty}r_k(n)n^{\frac{1}{2}-\frac{k}{4}}g(n)-\frac{\pi^{\frac{k}{2}}}{\G(\frac{k}{2})}\int_{0}^{\infty}
x^{\frac{k}{4}-\frac{1}{2}}g(x)\, dx,
\end{align}
where
\begin{equation*}
g(y)=\pi\int_{0}^{\infty}f(t)J_{\frac{k}{2}-1}(2\pi\sqrt{yt})\, dt.
\end{equation*}
Unfortunately, neither Popov nor Guinand gives any conditions on $f$ and $g$. One can refer to \cite[Theorem 1.5]{bdkz} for the same and also for a detailed discussion on this topic. 

While the integral transform of $f$ in \eqref{guirknsuminfallk} involves the Bessel function $J_{s}(t)$ in the kernel, the corresponding summation formula for the generalized sum-of-divisors function $\sigma_{-s}(n):=\sum_{d|n}d^{-s}$ involves the Koshliakov kernel 
\begin{equation}\label{kk}
F_{s}(t):=M_{s}(t)\cos\left(\frac{\pi s}{2}\right)-J_{s}(t)\sin\left(\frac{\pi s}{2}\right),
\end{equation}
where
\begin{equation}\label{msxi}
M_{s}(\xi):=\frac{2}{\pi}K_{s}(\xi)-Y_{s}(\xi),
\end{equation}
with $Y_{s}(\xi)$ being the Bessel function of the second kind defined by \cite[p.~64]{watson-1966a}
\begin{align*}
Y_{s}(\xi)=\frac{J_{s}(\xi)\cos(\pi s)-J_{-s}(\xi)}{\sin{\pi s}}.
\end{align*}
For a comprehensive discussion and recent new results on the Vorono\"{\dotlessi} summation formula for $\sigma_{-s}(n)$, the reader is referred to a paper by Berndt, Zaharescu and the present authors \cite[Section 6]{bdrz}. 

Guinand's version \cite[Theorem 6]{guinandsumself2}, \cite[Equation (1)]{guinand} of the Vorono\"{\dotlessi} summation formula for $\sigma_{-s}(n)$ reads, if $f(x)$ and  $f'(x)$ are integrals, $f$ tends to zero as $x\to\infty$, $f(x), xf'(x)$, and $x^2f''(x)$ belong to $L^{2}(0,\infty)$, $-\frac{1}{2}<$ Re$(s)<\frac{1}{2}$, and
\begin{equation}\label{recpairgui}
g(x) = 2\pi \int_{0}^{\infty} f(t) F_s(4\pi\sqrt{xt}) \,dt,
\end{equation}
where $F_s(t)$ is defined in \eqref{kk}, then
\begin{align}\label{guitra}
&\sum_{n=1}^{\infty} \sigma_{-s} (n) n^{\frac{s}{2}} f(n) - \zeta(1+s) \int_{0}^{\infty} y^{\frac{s}{2}} f(y) \,dy - \zeta(1-s) \int_{0}^{\infty} y^{-\frac{s}{2}} f(y) \,dy\nonumber\\
&= \sum_{n=1}^{\infty} \sigma_{-s} (n) n^{\frac{s}{2}} g(n) - \zeta(1+s) \int_{0}^{\infty} y^{\frac{s}{2}} g(y) \,dy - \zeta(1-s) \int_{0}^{\infty} y^{-\frac{s}{2}} g(y) \,dy.
\end{align}
The Vorono\"{\dotlessi} summation formula for $\sigma_0(n)=d(n)$, the number of divisors of $n$, has been found to be enormously useful in physics; for example, S.~Egger and F.~Steiner \cite{es1, es2} have shown that it plays the role of an exact trace formula for a Schr\"{o}dinger operator on a certain non-compact quantum graph.

The two notoriously difficult, and still unsolved problems, in analytic number theory are the Gauss circle and the Dirichlet divisor problems. They ask for the correct order of magnitude of the error terms $P(x)$ and $\Delta(x)$ involved in the summatory functions of $r_2(n)$ and $d(n)$ respectively. A natural tool to tackle these problems is to use the Vorono\"{\dotlessi} summation formula corresponding to each of these arithmetic functions. While they are inherently distinct in that one counts the number of lattice points inside a circle and the other the number of lattice points under a hyperbola, they have lots of similarities. For example, if an upper bound on $P(x)$ is obtained, then a similar one can be found for $\Delta(x)$. The reader is referred to a recent survey article by Berndt, Kim and Zaharescu \cite{bkz-unsolved} on these two problems. For the generalized Gauss circle problem, which counts the number of lattice points on multidimensional spheres and where the associated arithmetic function is naturally $r_k(n)$, the reader is referred to the two papers of Lursmana$\check{s}$vili \cite{lurs1}, \cite{lurs2}. 	

Considering the fact that Fock's integral evaluation \eqref{fock} is one of the key steps in his and Bursian's work \cite{fockbursian} on electromagnetism, the first goal of this paper is to derive an analogue of Koshliakov's formula \eqref{koshfock} (of which \eqref{fock} is a special case) where the kernel $J_{s}(\rho t)$ is replaced by the Koshliakov kernel $F_s(\rho t)$. The second goal is to apply this evaluation in \eqref{guitra} to obtain a general summation formula from which many interesting and important identities result as special cases.  

However, before we proceed to establish the former, it is important to mention the thoughts of Dixon and Ferrar \cite[p.~168]{dixfer3} on obtaining analogues of known integral identities where the $J$-Bessel function is replaced by the corresponding kernel $M_0(\xi)$, where $M_s(\xi)$ is defined in \eqref{msxi}. They say\footnote{We have inserted some comments in square brackets and changed some notations to conform them with those used in our paper.}:\\

\textit{Our recent work on summation-formulae suggests that it would be of interest to find a formula corresponding to (1) [Equation \eqref{grn693} here] when $J_{s}(\rho t)t^{s+1}$ is replaced by $tM_{0}(\rho t)$. The problem of finding such a formula has proved to be a difficult one: we have found that any single integral of the type which occurs in (1) leads to a complicated and ill-balanced formula; the only way we have discovered of obtaining a reasonably well-balanced formula is to combine two integrals.}

To give an example of their principle, consider the following identity due to Koshliakov \cite[Equation (5)]{koshliakov} for Re$(\b)>0$:
\begin{equation}\label{koshvor}
2 \sum_{n=1}^{\infty} d(n) \left( K_{0} \left( 4 \pi e^{\frac{i \pi}{4}} \sqrt{n\b} \right) + K_{0} \left( 4 \pi e^{-\frac{i \pi}{4}} \sqrt{n\b} \right)\right)=- \gamma - \frac{1}{2} \log\b - \frac{1}{4 \pi\b} +
\frac{\b}{\pi} \sum_{n=1}^{\infty} \frac{d(n)}{\b^{2} + n^{2}},
\end{equation}
which is instrumental in his clever proof of the Vorono\"{\dotlessi} summation formula for the divisor function $d(n)$. The latter is, in fact, shown in \cite{soni} to be equivalent to the above formula. Here $\g$ denotes Euler's constant. Koshliakov's identity can be proved by replacing $a$, first by $\b e^{i\frac{\pi}{2}}$, and then by $\b e^{-i\frac{\pi}{2}}$, in the identity \cite[Equations (5), (6)]{voronoi}, \cite[p.~254]{lnb}
\begin{equation}\label{vorram}
2\sum_{n=1}^{\infty}d(n)K_{0}(4\pi\sqrt{an})=\frac{a}{\pi^2}\sum_{n=1}^{\infty}\frac{d(n)\log(a/n)}{a^2-n^2}-\frac{\gamma}{2}-\left(\frac{1}{4}+\frac{1}{4\pi^2a}\right)\log a-\frac{\log 2\pi}{2\pi^2a},
\end{equation}
and then by adding the two resulting identities. Note that even though each of the individual series $\sum_{n=1}^{\infty} d(n) K_{0} \left( 4 \pi e^{\pm\frac{i \pi}{4}} \sqrt{n\b} \right)$ admits a representation through \eqref{vorram}, adding the two gives a simpler and a well-balanced formula \eqref{koshvor}.

A generalization of Koshliakov's above identity, which is also a key ingredient in obtaining a short proof of Vorono\"{\dotlessi} summation formula for $\sigma_{\l}(n)$ \cite[Section 6.1]{bdrz}, was first studied by Moll and one of the present authors in \cite[Section 6]{dixitmoll}. For Re$(\b)>0$ and Re$(\l)>-1$, it is given by
\begin{align}\label{dixmol}
&2 \sum_{n=1}^{\infty} \sigma_{-\l}(n) n^{\frac{\l}{2}} 
\left( e^{\frac{\pi i \l}{4}} K_{\l}( 4 \pi e^{\frac{\pi i}{4}} \sqrt{n\b} ) +
 e^{-\frac{\pi i \l}{4}} K_{\l}( 4 \pi e^{-\frac{\pi i}{4}} \sqrt{n\b} ) \right)\nonumber\\
&=- \frac{\Gamma(\l) \zeta(\l)}{(2 \pi \sqrt{\b})^{\l}}  +
\frac{\b^{\frac{\l}{2}-1}}{2 \pi} \zeta(\l) - \frac{\b^{\frac{\l}{2}}}{2} \zeta(\l+1)  +\frac{\b^{\frac{\l}{2}+1}}{\pi} \sum_{n=1}^{\infty} \frac{\sigma_{-\l}(n)}{n^{2}+\b^{2}},
\end{align}
Here one can observe the same aforementioned principle. To see this, we first note the following generalization of \eqref{vorram}, due to Cohen \cite[Theorem 3.4]{cohen}, valid for Re$(a)>0$ and $\l\notin\mathbb{Z}$:
\begin{align*}
&8\pi a^{\l/2}\sum_{n=1}^{\infty}\sigma_{-\l}(n)n^{\l/2}K_{\l}(4\pi\sqrt{n a})=
A(\l, a)\zeta(\l)+B(\l, a)\zeta(\l+1)\\
&\quad+\frac{2}{\sin\left(\pi \l/2\right)}\left(\sum_{1\leq j\leq k}\zeta(2j)\zeta(2j-\l)a^{2j-1}+a^{2k+1}\sum_{n=1}^{\infty}\sigma_{-\l}(n)\frac{n^{\l-2k}-a^{\l-2k}}{n^2-a^2}\right),\notag
\end{align*}%
where
\begin{align*}
A(\l, a)&=\frac{a^{\l-1}}{\sin\left(\pi \l/2\right)}-(2\pi)^{1-\l}\Gamma(\l),\nonumber\\
B(\l, a)&=\frac{2}{a}(2\pi)^{-\l-1}\Gamma(\l+1)-\frac{\pi a^{\l}}{\cos\left(\pi \l/2\right)}.
\end{align*}
Now as shown in \cite[p.~844]{bdrz}, \eqref{dixmol} follows from the above result by first replacing $a$ by $i\b$ for $-\pi<\arg\b<\frac{1}{2}\pi$, then by $-i\b$ for $-\frac{1}{2}\pi<\arg\b<\pi$, and then by adding the resulting two identities. Now observe that compared to the representations for each of $\sum_{n=1}^{\infty} \sigma_{-\l}(n) n^{\frac{\l}{2}} 
K_{\l}( 4 \pi e^{\pm\frac{\pi i}{4}} \sqrt{n\b} )$, the one derived in \eqref{dixmol} for its left-hand side is much simpler and well-balanced.

Dixon and Ferrar's principle was also known to Ramanujan, for, on page $336$ of his Lost Notebook \cite{lnb}, we find the following identity: 
{\allowdisplaybreaks\begin{align}\label{ramin}
&\Gamma\left(s+\frac{1}{2}\right)
\bigg\{\frac{\zeta(1-s)}{(s-\frac{1}{2})\b^{s-\frac{1}{2}}}+\frac{\zeta(-s)\tan\frac{1}{2}\pi s}{2\b^{s+\frac{1}{2}}}\nonumber\\
&\quad+\sum_{n=1}^{\infty}\frac{\sigma_s(n)}{2i}\left((\b-in)^{-s-\frac{1}{2}}-(\b+in)^{-s-\frac{1}{2}}\right)\bigg\}
\nonumber\\
&=(2\pi)^s\bigg\{\frac{\zeta(1-s)}{2\sqrt{\pi\b}}-2\pi\sqrt{\pi\b}\zeta(-s)\tan\tfrac{1}{2}\pi s\nonumber\\
&\quad+\sqrt{\pi}\sum_{n=1}^{\infty}\frac{\sigma_s(n)}{\sqrt{n}}e^{-2\pi\sqrt{2n\b}}
\sin\left(\frac{\pi}{4}+2\pi\sqrt{2n\b}\right)\bigg\}.
\end{align}}
Clearly, one can see powers of $\b+in$ and $\b-in$ both playing a role in the above identity than just one of them. Note that in the summands of the infinite series on the right-hand sides of each of \eqref{koshvor} and \eqref{dixmol}, one has
\begin{equation*}
\frac{1}{\b^2+n^2}=\frac{1}{2\b}\left(\frac{1}{\b+in}+\frac{1}{\b-in}\right).
\end{equation*}
The identity \eqref{ramin}, however, incorrect since the series on its left-hand side diverges. We obtain a corrected version of \eqref{ramin} in Corollary \ref{ramanujan-type} of this paper. In fact, searching for a corrected version of \eqref{ramin} was one of the chief motivations behind our work.

Also, among other things, we carry out an in-depth analysis of the series
\begin{align*}
\sum_{n=1}^{\infty}\sigma_s(n)\bigg\{&I_{\l}\left(2\pi e^{-\frac{i\pi}{4}}\sqrt{n}(\sqrt{\a}-\sqrt{\b})\right)K_{\l}\left(2\pi e^{-\frac{i\pi}{4}}\sqrt{n} (\sqrt{\a}+\sqrt{\b})\right)\nonumber\\
&+I_{\l}\left(2\pi e^{\frac{i\pi}{4}}\sqrt{n}(\sqrt{\a}-\sqrt{\b})\right)K_{\l}\left(2\pi e^{\frac{i\pi}{4}}\sqrt{n}(\sqrt{\a}+\sqrt{\b})\right)\bigg\},
\end{align*}
for $|\arg\a|<\pi/2$, $|\arg\b|<\pi/2$ and Re$(\sqrt{\a})> $Re$(\sqrt{\b})$, whose limiting case $\a\to\b$ (after we divide it by $(\sqrt{\a}-\sqrt{\b})^{\l}$) is, as will be shown in the paper, the series on the left-hand side of \eqref{dixmol}. The convergence of the above series for the aforementioned values of $\a, \b$ can be seen from the fact that for large $n$
\begin{align*}
&I_{\l}\left(2\pi\overline{\epsilon}\sqrt{n}(\sqrt{\a}-\sqrt{\b})\right)K_{\l}\left(2\pi\overline{\epsilon}\sqrt{n} (\sqrt{\a}+\sqrt{\b})\right)\nonumber\\
&+I_{\l}\left(2\pi\epsilon\sqrt{n}(\sqrt{\a}-\sqrt{\b})\right)K_{\l}\left(2\pi\epsilon\sqrt{n}(\sqrt{\a}+\sqrt{\b})\right)\nonumber\\
&\sim\frac{e^{-2\pi\sqrt{2n\b}}}{2\pi\sqrt{n}\sqrt{\a-\b}}\sin\left(\frac{\pi}{4}-2\pi\sqrt{2n\b}\right),
\end{align*}
which, in turn, can be proved using \eqref{asymbess2} and \eqref{iasy} below.

Dixon and Ferrar \cite[p.~174]{dixfer3} gave the following integral evaluation which corresponds to \eqref{grn693} upon replacement of the kernel $J_{s}(\rho t)t^{s+1}$ by $tM_{0}(\rho t)$:
\begin{align}\label{st1}
&\int_{0}^{\infty}tM_{0}(\rho t)\left\{\frac{K_{\nu}\left(a\sqrt{\xi^2+t^2}\right)}{(\xi^2+t^2)^{\nu/2}}+\frac{K_{\nu}\left(a\sqrt{\xi^2-t^2}\right)}{(\xi^2-t^2)^{\nu/2}}\right\}\, dt\nonumber\\
&=\frac{i\xi^{1-\nu}}{a^{\nu}}\left\{\frac{K_{1-\nu}\left(\xi\sqrt{a^2+\rho^2}\right)}{(a^2+\rho^2)^{(1-\nu)/2}}+\frac{K_{1-\nu}\left(\xi\sqrt{a^2-\rho^2}\right)}{(a^2-\rho^2)^{(1-\nu)/2}}\right\}.
\end{align}
where $\xi>0, a>0, \rho>0, a\neq \rho$, and Re $\nu>0$. (They also give different extensions of the above formula for values of $\nu, a$ and $\rho$ other than those considered above, which ensure its validity as it is, or with some modifications.) Here, of course, the path of integration is indented to avoid the singularity of the integrand. It ensures that $\arg(z^2-t^2)=-\pi$ when $t>z$. At the end of their paper, they also say that the indented path may be replaced by a straight-line path provided Re$(\nu)<1$ where they always retain the conventions $\arg(z^2-t^2)=0$ when $t<z$ and $\arg(z^2-t^2)=-\pi$ when $t>z$.

Though Dixon and Ferrar evaluated certain integrals with $tM_{0}(\rho t)$ as the kernel, they did not work with integrals having the general kernel $t^{s+1}M_{s}(\rho t)$ in their integrands. However, Koshliakov \cite[p.~425]{kosh1938} obtained such a result\footnote{There are two typos in Koshliakov's version of \eqref{koshcertain}, namely, in \emph{his} version, the arguments of the $K$-Bessel functions inside the integral on the left contain $a$, and those on the right contain $x$. Both these typos are corrected, albeit with renaming of his parameters, in our version of his formula given in \eqref{koshcertain}.}, namely, for $\rho>0, b>0, \xi>0$ and $s>-1$,
\begin{align}\label{koshcertain}
&\int_{0}^{\infty}t^{s+1}M_{s}(\rho t)\left\{\frac{K_{\nu}\left(b \sqrt{\xi^2+it^2}\right)}{(\xi^2+it^2)^{\nu/2}}+\frac{K_{\nu}\left(b \sqrt{\xi^2-it^2}\right)}{(\xi^2-it^2)^{\nu/2}}\right\}\, dt\nonumber\\
&=\frac{\xi^{1+s-\nu}\rho^{s}}{b^{\nu}}\left\{e^{-\frac{i\pi}{4}\left(\nu+s-1\right)}\frac{K_{1+s-\nu}(\xi e^{-\frac{i\pi}{4}} \sqrt{\rho^2+ib^2})}{(\rho^2+ib^2)^{\frac{1+s-\nu}{2}}}+e^{\frac{i\pi}{4}\left(\nu+s-1\right)}\frac{K_{1+s-\nu}(\xi e^{\frac{i\pi}{4}} \sqrt{\rho^2-ib^2})}{(\rho^2-ib^2)^{\frac{1+s-\nu}{2}}}\right\}.
\end{align}
Henceforth, throughout the paper, we follow Koshliakov \cite{kosh1938} in using the notation\footnote{Note that $\epsilon^{-\left(\nu-1\right)}=\overline{\epsilon}^{\left(\nu-1\right)}$, so there is no possibility of confusion unless, of course, one does not put the parentheses. However, we wish to keep the results in the sequel symmetric with respect to $\epsilon$ and $\overline{\epsilon}$ whenever we can. We will use, without mention, the fact that $\overline{\epsilon}=1/\epsilon$.}
\begin{equation*}
\epsilon=e^{\frac{i\pi}{4}},\hspace{5mm} \overline{\epsilon}=e^{-\frac{i\pi}{4}},
\end{equation*}
for brevity and convenience. Note that in \eqref{koshcertain}, Koshliakov has avoided the necessity of indenting the contour, which was the requirement in Dixon and Ferrar's \eqref{st1}. Thus the path of integration in \eqref{koshcertain} is the straight line from $0$ to $\infty$.

\section{Main Results}\label{mr}
All of our results in this paper abide by the principle by Dixon and Ferrar of combining two integrals while working with the kernel $t^{s+1}F_{s}(t)$, which is, of course, a generalization of the kernel $tM_{0}(t)$ that Dixon and Ferrar work with in \cite{dixfer3}. 

An application of \eqref{grn693} and \eqref{koshcertain} leads to the following desired analogue of \eqref{koshfock}.
\begin{theorem}\label{tfockathm}
Let $|\arg z|<\pi/4, |\arg w|<\pi/4$ and \textup{Re}$(z)>$ \textup{Re}$(w)$. Let $F_{s}(t)$ be defined in \eqref{kk}. Define
\begin{align}\label{funa}
\mathcal{A}(t):=\mathcal{A}(s, \l, z, w, t)&:=\frac{1}{\sqrt{z^2+it}\sqrt{w^2+it}}\left(\frac{1}{\sqrt{z^2+it}}+\frac{1}{\sqrt{w^2+it}}\right)^{2s}\left(\frac{\sqrt{z^2+it}-\sqrt{w^2+it}}{\sqrt{z^2+it}+\sqrt{w^2+it}}\right)^{\l}\nonumber\\
&\quad\times\pFq21{\l-s, -s}{\l+1}{\left(\frac{\sqrt{z^2+it}-\sqrt{w^2+it}}{\sqrt{z^2+it}+\sqrt{w^2+it}}\right)^2}.
\end{align}
Then for $\rho>0$, \textup{Re}$(s)>-1$, \textup{Re}$(\l)>-1/2$ and \textup{Re}$(s+\l)>-1$, we have
\begin{align}\label{tfockaeqn}
&\int_{0}^{\infty}t^{s+1}F_{s}(\rho t)\left(\mathcal{A}(t^2)+\mathcal{A}(-t^2)\right)\, dt\nonumber\\
&=\frac{\G(\l+1)}{\G(s+\l+1)}(2\rho)^{s}\left\{I_{\l}\left(\frac{\rho\overline{\epsilon}(z-w)}{2}\right)K_{\l}\left(\frac{\rho\overline{\epsilon}(z+w)}{2}\right)+I_{\l}\left(\frac{\rho\epsilon(z-w)}{2}\right)K_{\l}\left(\frac{\rho\epsilon(z+w)}{2}\right)\right\}.
\end{align}
\end{theorem}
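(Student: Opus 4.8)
The plan is to exploit the decomposition of the Koshliakov kernel in \eqref{kk}, writing $F_{s}(\rho t)=\cos(\pi s/2)\,M_{s}(\rho t)-\sin(\pi s/2)\,J_{s}(\rho t)$, so that the left-hand side of \eqref{tfockaeqn} splits as $\cos(\pi s/2)\,I_{M}-\sin(\pi s/2)\,I_{J}$, where $I_{J}:=\int_{0}^{\infty}t^{s+1}J_{s}(\rho t)(\mathcal{A}(t^{2})+\mathcal{A}(-t^{2}))\,dt$ and $I_{M}$ is the same integral with $J_{s}$ replaced by $M_{s}$. I would evaluate $I_{J}$ and $I_{M}$ separately and reassemble. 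The structural observation driving everything is this: writing $\mathcal{K}(u)$ for the Koshliakov integrand occurring in \eqref{koshfock} (the bracketed product there, in the variable $u$, built from $\sqrt{u^{2}+z^{2}},\sqrt{u^{2}+w^{2}}$), one has $\mathcal{A}(t^{2})=\mathcal{K}(\epsilon t)$ and $\mathcal{A}(-t^{2})=\mathcal{K}(\overline{\epsilon}t)$, since $\epsilon^{2}=i$ and $\overline{\epsilon}^{\,2}=-i$ turn $(\epsilon t)^{2}+z^{2}$ into $z^{2}+it^{2}$ and $(\overline{\epsilon}t)^{2}+z^{2}$ into $z^{2}-it^{2}$. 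This identification is exactly what links the $\pm it^{2}$ arguments in \eqref{funa} both to the real-argument integrand of \eqref{koshfock} and to the $\pm it^{2}$ kernel of \eqref{koshcertain}. Throughout I abbreviate the two target products as
\begin{equation*}
P_{\overline{\epsilon}}:=\frac{\Gamma(\l+1)}{\Gamma(s+\l+1)}(2\rho)^{s}I_{\l}\bigl(\tfrac{\rho\overline{\epsilon}(z-w)}{2}\bigr)K_{\l}\bigl(\tfrac{\rho\overline{\epsilon}(z+w)}{2}\bigr),
\end{equation*}
and $P_{\epsilon}$ defined by replacing $\overline{\epsilon}$ with $\epsilon$, so that the right-hand side of \eqref{tfockaeqn} is precisely $P_{\overline{\epsilon}}+P_{\epsilon}$.

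For $I_{J}$, I would substitute $u=\epsilon t$ (respectively $u=\overline{\epsilon}t$) in the two pieces, rotate the path of integration back onto the positive real axis, and recognize the outcome as the right-hand side of Koshliakov's formula \eqref{koshfock} with $\rho$ replaced by $\rho\overline{\epsilon}$ (respectively $\rho\epsilon$). Collecting the powers of $\epsilon$ produced by $t^{s+1}\,dt$ and by the factor $(2\rho\overline{\epsilon})^{s}$ gives
\begin{align*}
\int_{0}^{\infty}t^{s+1}J_{s}(\rho t)\mathcal{A}(t^{2})\,dt&=e^{-i\pi(s+1)/2}P_{\overline{\epsilon}}, & \int_{0}^{\infty}t^{s+1}J_{s}(\rho t)\mathcal{A}(-t^{2})\,dt&=e^{i\pi(s+1)/2}P_{\epsilon}.
\end{align*}
For $I_{M}$, the integral carries exactly the symmetric $+it^{2}$/$-it^{2}$ shape of the left-hand side of \eqref{koshcertain}; after expressing $\mathcal{A}$ as a superposition, over an auxiliary parameter $b$, of the elementary Macdonald kernels $K_{\nu}(b\sqrt{z^{2}\pm it^{2}})(z^{2}\pm it^{2})^{-\nu/2}$ (the representation underlying Koshliakov's own passage from \eqref{grn693} to \eqref{koshfock}), I would apply \eqref{koshcertain} under the integral sign and integrate the weight in $b$. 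The expected result is
\begin{align*}
\int_{0}^{\infty}t^{s+1}M_{s}(\rho t)\mathcal{A}(t^{2})\,dt&=e^{-i\pi s/2}P_{\overline{\epsilon}}, & \int_{0}^{\infty}t^{s+1}M_{s}(\rho t)\mathcal{A}(-t^{2})\,dt&=e^{i\pi s/2}P_{\epsilon}.
\end{align*}

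Assembling these through $\cos(\pi s/2)\,I_{M}-\sin(\pi s/2)\,I_{J}$, the coefficient of $P_{\overline{\epsilon}}$ is $\cos(\pi s/2)\,e^{-i\pi s/2}-\sin(\pi s/2)\,e^{-i\pi(s+1)/2}=e^{-i\pi s/2}\bigl(\cos(\pi s/2)+i\sin(\pi s/2)\bigr)=1$, using $e^{-i\pi(s+1)/2}=-i\,e^{-i\pi s/2}$, and the conjugate computation gives the coefficient of $P_{\epsilon}$ as $1$ as well. Hence the left-hand side collapses to $P_{\overline{\epsilon}}+P_{\epsilon}$, which is \eqref{tfockaeqn}. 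I expect the main obstacle to be analytic rather than formal: justifying the contour rotations in the evaluation of $I_{J}$ and the interchange of the $b$- and $t$-integrations in the evaluation of $I_{M}$. This demands control of the analyticity and decay of $\mathcal{A}$ and of the Bessel factors throughout the sectors $|\arg|\le\pi/4$ swept out by the substitutions $u=\epsilon t$ and $u=\overline{\epsilon}t$ --- which is precisely where the hypotheses $|\arg z|,|\arg w|<\pi/4$ and $\textup{Re}(z)>\textup{Re}(w)$ are consumed --- together with absolute convergence at both endpoints under $\textup{Re}(s)>-1$, $\textup{Re}(\l)>-1/2$, and $\textup{Re}(s+\l)>-1$, so that Cauchy's theorem and Fubini's theorem genuinely apply. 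The single most delicate point is establishing the superposition representation of $\mathcal{A}$ and matching the hypergeometric integrand in \eqref{funa} term-by-term to the elementary kernels of \eqref{grn693} and \eqref{koshcertain}.
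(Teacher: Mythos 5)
Your structural identification $\mathcal{A}(t^{2})=\mathcal{K}(\epsilon t)$, $\mathcal{A}(-t^{2})=\mathcal{K}(\overline{\epsilon}t)$ is correct, the final phase bookkeeping is correct, and your plan for $I_{M}$ (superpose $\mathcal{A}$ over Macdonald kernels in an auxiliary parameter and apply \eqref{koshcertain} under the integral sign) is in substance the paper's own device. The proof breaks, however, at the evaluation of $I_{J}$. After substituting $u=\epsilon t$ the integral lives on the ray $\arg u=\pi/4$, and you propose to rotate it onto the positive real axis so as to read off \eqref{koshfock} with $\rho$ replaced by $\rho\overline{\epsilon}$. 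This rotation is not merely delicate to justify; it is impossible. By \eqref{asymbess}, for $u>0$ the factor $J_{s}(\rho\overline{\epsilon}u)$ grows like $e^{\rho u/\sqrt{2}}/\sqrt{\rho u}$ (the $J$-Bessel function grows exponentially off the real axis of its argument, and it does so throughout the interior of the sector $0\le\arg u<\pi/4$), while by \eqref{asssy} the Koshliakov integrand $\mathcal{K}(u)$ decays only like the power $u^{-2\,\mathrm{Re}(s+\lambda+1)}$. Hence the rotated integral $\int_{0}^{\infty}u^{s+1}J_{s}(\rho\overline{\epsilon}u)\mathcal{K}(u)\,du$ diverges, the circular arcs joining the two rays blow up rather than vanish, and ``\eqref{koshfock} with $\rho\mapsto\rho\overline{\epsilon}$'' has no meaning: \eqref{koshfock} admits no analytic continuation to $\arg\rho=\mp\pi/4$ as an integral identity, because its left-hand side ceases to exist there. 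No amount of decay of $\mathcal{A}$ can rescue this, since the obstruction is the exponential growth of the $J$-factor. This is precisely the difference between \eqref{grn693}, whose exponentially decaying Macdonald kernel permits complex parameters with $\mathrm{Re}(a)>|\mathrm{Im}(\rho)|$ (Section \ref{ex}), and \eqref{koshfock}, whose integrand has no such protection.

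The gap is repairable, and your claimed values for the $J$-integrals are in fact true: treat $I_{J}$ exactly as you treat $I_{M}$. Represent $\mathcal{A}(\pm t^{2})$ as the $y$-superposition of the kernels $K_{s+1/2}\left(y\sqrt{w^{2}\pm it^{2}}\right)(w^{2}\pm it^{2})^{-\frac{s}{2}-\frac{1}{4}}$ with weight $y^{s+1/2}J_{2\lambda}\left(y\sqrt{z^{2}-w^{2}}\right)$ --- this is the paper's \eqref{prudev1s}, obtained from the Prudnikov--Brychkov--Marichev evaluation together with Pfaff's and a quadratic hypergeometric transformation, and it is here that the hypotheses $\mathrm{Re}(\lambda)>-1/2$ and $\mathrm{Re}(s+\lambda)>-1$ are consumed --- and then evaluate the inner $t$-integral by the complex-parameter form of \eqref{grn693}, i.e.\ by \eqref{grn6931}--\eqref{grn6932}, which is legitimate because there the Macdonald kernel supplies decay $e^{-\mathrm{Re}(a)t}$ dominating $J_{s}(\rho t)$. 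The remaining $y$-integral is \eqref{grn708-1}, and one indeed gets $\int_{0}^{\infty}t^{s+1}J_{s}(\rho t)\mathcal{A}(t^{2})\,dt=e^{-i\pi(s+1)/2}P_{\overline{\epsilon}}$ as you predicted. With this replacement your argument becomes a correct, mildly reorganized version of the paper's proof: the paper merges the $J_{s}$-piece \eqref{grn69312} and the $M_{s}$-piece \eqref{koshcertain} into the single $F_{s}$-identity \eqref{grn69312f} \emph{before} superposing, whereas you superpose the two pieces separately and recombine the phases at the end. One smaller inaccuracy: your two displayed individual evaluations of the $M_{s}$-integrals do not follow from \eqref{koshcertain}, which evaluates only their sum; since only the sum enters your assembly, that point is presentational rather than fatal.
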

Similar to \eqref{koshfock}, the above theorem also contains many important integral evaluations as its special cases. For example, letting $s=\l$, dividing both sides by $(z-w)^{\l}$ and then letting $z\to w$ leads to the following analogue of Sonine's integral \eqref{sonine} for $\rho>0$, $|\arg w|<\pi/4$ and Re$(\l)>-1/2$:
\begin{align*}
&\int_{0}^{\infty}t^{\l+1}F_{\l}(\rho t)\left\{\frac{1}{(w^2+it)^{2\l+1}}+\frac{1}{(w^2-it)^{2\l+1}}\right\}\, dt\nonumber\\
&=\frac{1}{\G(2\l+1)}\left(\frac{\rho}{2\sqrt{w}}\right)^{2\l}\left\{e^{\frac{i\pi\l}{4}}K_{\l}\left(e^{\frac{i\pi}{4}}\rho w\right)+e^{-\frac{i\pi\l}{4}}K_{\l}\left(e^{-\frac{i\pi}{4}}\rho w\right)\right\}.
\end{align*}
The transformation between two infinite series involving $\sigma_{s}(n)$ that we obtain upon using Theorem \ref{tfockathm} in \eqref{guitra} is now given. The corresponding result for $r_k(n)$, the number of representations of $n$ as sum of $k$ squares, $k\in\mathbb{N}$, was established in \cite[Theorem 1.6]{bdkz}, although we emphasize that the level of difficulty in obtaining the result for $\sigma_s(n)$ is quite high as compared to the one for $r_k(n)$.
\begin{theorem}\label{anathmbdkz}
Let $\mathcal{A}(s, \l, z, w, t)$ be defined in \eqref{funa}. Let $|\arg \a|<\pi/2, |\arg \b|<\pi/2$ with \textup{Re}$(\sqrt{\a})>$ \textup{Re}$(\sqrt{\b})$. Let \textup{Re}$(\l)>0$. Define $h(s,\l)$ by
\begin{equation}\label{hsl}
h(s, \l):=\begin{cases}
\hspace{6mm}0,\hspace{9mm}\text{if}\hspace{2mm}\textup{Re}(s+\l)>0,\\
\displaystyle\frac{(\a-\b)^{\l}}{2^{4\l+1}},\hspace{2mm}\text{if}\hspace{1mm}s=-\l.
\end{cases}
\end{equation}
Then for all $s\in\mathbb{C}$ such that \textup{Re}$(s+\l)>0$ or $s=-\l$, we have
\begin{align}\label{anathmbdkzeqn}
&\sum_{n=1}^{\infty}\sigma_s(n)\bigg\{I_{\l}\left(2\pi\overline{\epsilon}(\sqrt{n\a}-\sqrt{n\b})\right)K_{\l}\left(2\pi\overline{\epsilon}(\sqrt{n\a}+\sqrt{n\b})\right)\nonumber\\
&\qquad\qquad+I_{\l}\left(2\pi\epsilon(\sqrt{n\a}-\sqrt{n\b})\right)K_{\l}\left(2\pi\epsilon(\sqrt{n\a}+\sqrt{n\b})\right)\bigg\}\nonumber\\
&=-\frac{\G(s+\l+1)\zeta(1-s)}{2(8\pi)^s\G(\l+1)}h(s, \l)-\frac{\zeta(-s)}{2\l}\left(\frac{\sqrt{\a}-\sqrt{\b}}{\sqrt{\a}+\sqrt{\b}}\right)^{\l}+\frac{\zeta(-s)}{2^{3s+2}\pi^{s+1}\sqrt{\a\b}}\frac{\G(\l+s+1)}{\G(\l+1)}\nonumber\\
&\quad\times\left(\frac{\sqrt{\a}-\sqrt{\b}}{\sqrt{\a}+\sqrt{\b}}\right)^{\l}\left(\frac{1}{\sqrt{\a}}+\frac{1}{\sqrt{\b}}\right)^{2s}\pFq21{\l-s, -s}{\l+1}{\left(\frac{\sqrt{\a}-\sqrt{\b}}{\sqrt{\a}+\sqrt{\b}}\right)^2}\nonumber\\
&\quad+\frac{\G(\l+s+1)}{2^{3s+2}\pi^{s+1}\G(\l+1)}\sum_{n=1}^{\infty}\sigma_s(n)\left(\mathcal{A}\left(s, \l, \sqrt{\a}, \sqrt{\b}, n\right)+\mathcal{A}\left(s, \l, \sqrt{\a}, \sqrt{\b}, -n\right)\right).
\end{align}
\end{theorem}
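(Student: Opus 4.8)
The plan is to feed a single test function into Guinand's summation formula \eqref{guitra}, arranging for its $F_s$-transform \eqref{recpairgui} to be furnished by Theorem \ref{tfockathm}. Comparing the Bessel products on the left of \eqref{anathmbdkzeqn} with the right-hand side of \eqref{tfockaeqn} forces the identifications $z=\sqrt{\a}$, $w=\sqrt{\b}$ and $\rho=4\pi\sqrt{x}$; accordingly I set
\begin{equation*}
f(x):=\df{\G(s+\l+1)}{2^{3s+2}\pi^{s+1}\G(\l+1)}\,x^{s/2}\left(\mathcal{A}(s,\l,\sqrt{\a},\sqrt{\b},x)+\mathcal{A}(s,\l,\sqrt{\a},\sqrt{\b},-x)\right),
\end{equation*}
with $\mathcal{A}$ as in \eqref{funa}. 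Before anything else I would verify that $f$ meets the regularity hypotheses of \eqref{guitra} (absolute continuity of $f,f'$, decay at infinity, and membership of $f,xf',x^{2}f''$ in $L^{2}(0,\infty)$) for $-\tfrac12<\Re(s)<\tfrac12$, using the large-argument asymptotics of the Bessel functions to control $\mathcal{A}(\pm x)=O(x^{-1-s-\l})$.

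With $f$ in hand I would compute $g$ from \eqref{recpairgui}. The substitution $u=t^{2}$ turns $g(x)$ into the constant above times $4\pi\int_{0}^{\infty}t^{s+1}F_s(4\pi\sqrt{x}\,t)(\mathcal{A}(t^{2})+\mathcal{A}(-t^{2}))\,dt$, which is exactly the integral evaluated in Theorem \ref{tfockathm} with $\rho=4\pi\sqrt{x}$. Since $(2\rho)^{s}=(8\pi)^{s}x^{s/2}$ and $\tfrac12\rho(z\mp w)=2\pi\sqrt{x}(\sqrt{\a}\mp\sqrt{\b})$, the constant has been chosen precisely so that $g(n)$ equals $n^{s/2}$ times the paired $I_{\l}K_{\l}$ terms of \eqref{anathmbdkzeqn}; because $\s_{-s}(n)n^{s/2}\cdot n^{s/2}=\s_{s}(n)$, the series $\sum_{n}\s_{-s}(n)n^{s/2}g(n)$ collapses to the left-hand side of \eqref{anathmbdkzeqn}. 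By the same token $\sum_{n}\s_{-s}(n)n^{s/2}f(n)$ equals the constant times $\sum_{n}\s_{s}(n)(\mathcal{A}(n)+\mathcal{A}(-n))$, which is the last series on the right of \eqref{anathmbdkzeqn} with its stated prefactor.

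It then remains to evaluate the four ``boundary'' integrals $\int_{0}^{\infty}y^{\pm s/2}f(y)\,dy$ and $\int_{0}^{\infty}y^{\pm s/2}g(y)\,dy$ occurring in \eqref{guitra} and to reassemble them, weighted by $\z(1\pm s)$, into the three explicit terms of \eqref{anathmbdkzeqn}. For the $g$-moments I would interchange the order of integration and insert the (self-reciprocal) Mellin transform of the Koshliakov kernel $F_s$, reducing everything to moments of $f$. The moment $\int_{0}^{\infty}y^{s/2}f(y)\,dy$ I would evaluate by writing $\mathcal{A}(\pm y)=\Phi(\pm iy)$ and rotating the ray of integration onto the positive real axis: the rotation produces a factor $-2\sin(\tfrac{\pi s}{2})$ times the real Euler-type integral $\int_{0}^{\infty}\zeta^{s}\Phi(\zeta)\,d\zeta$, and this $\sin(\tfrac{\pi s}{2})$ combines with the weight $\z(1+s)$, through the functional equation of $\z$, to produce the factor $\z(-s)$; the closed form of the $\zeta$-integral is expected to yield both the hypergeometric term $\z(-s)\,C\,\mathcal{A}(s,\l,\sqrt{\a},\sqrt{\b},0)$ and the elementary term $-\z(-s)(\sqrt{\a}-\sqrt{\b})^{\l}/\bigl(2\l(\sqrt{\a}+\sqrt{\b})^{\l}\bigr)$.

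The delicate point, and the one I expect to be the main obstacle, is the remaining moment $\int_{0}^{\infty}y^{-s/2}f(y)\,dy=C\int_{0}^{\infty}(\mathcal{A}(y)+\mathcal{A}(-y))\,dy$, which carries the weight $\z(1-s)$ and sits exactly at its abscissa of convergence when $s=-\l$. The same contour rotation shows that for $\Re(s+\l)>0$ the two rotated rays cancel, so this moment vanishes and no $h$-term survives; this is why $h(s,\l)=0$ there in \eqref{hsl}. At the borderline $s=-\l$ the decay that justified the rotation is lost, the quarter-circle arc at infinity no longer has vanishing contribution, and it is precisely this arc that should deliver the finite residual $(\a-\b)^{\l}/2^{4\l+1}$ and hence the first term of \eqref{anathmbdkzeqn}. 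Making this limiting argument rigorous — balancing the borderline divergence of the integral against a compensating term and extracting the arc contribution — together with the analytic continuation in $s$ from the strip $-\tfrac12<\Re(s)<\tfrac12$ out to the full range $\Re(s+\l)>0$, is where the real work lies; the bookkeeping in the convergent regime is routine by comparison.
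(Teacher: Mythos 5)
Your skeleton is the paper's: feed $f(x)=C\,x^{s/2}\left(\mathcal{A}(x)+\mathcal{A}(-x)\right)$ into Guinand's formula \eqref{guitra}, obtain $g$ from Theorem \ref{tfockathm} with $\rho=4\pi\sqrt{x}$, and assemble four moments. Your normalization of $f$ is correct, the identification of the two Bessel/$\mathcal{A}$ series is correct, and your treatment of the $f$-moments matches the paper: the rotation of $\int_{0}^{\infty}y^{\nu}\mathcal{A}(\pm y)\,dy$ onto $\int_{0}^{\infty}y^{\nu}\mathcal{A}(-iy)\,dy$, producing the factor $i^{-\nu-1}+i^{\nu+1}=-2\sin(\pi\nu/2)$, is exactly \eqref{hintfirst}--\eqref{nuinte}, and the conversion $\sin(\tfrac{\pi s}{2})\G(1+s)\z(1+s)\mapsto\z(-s)$ via \eqref{asyfe} is how the paper proceeds. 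Your account of the $h$-term is also correct in substance, and in fact cleaner than the paper's: at $s=-\l$, $\nu=0$ the prefactors $\mp i$ cancel the rotated real-axis integrals exactly for every finite $R$, and the two quarter-circle arcs, on which $\mathcal{A}(-i\xi)\sim 4^{-2\l}(\a-\b)^{\l}/\xi$ by \eqref{asssy}, contribute $\pi\,4^{-2\l}(\a-\b)^{\l}$ in the limit; with the paper's normalization $f=\tfrac{1}{2\pi}t^{s/2}(\mathcal{A}(t)+\mathcal{A}(-t))$ this is precisely $(\a-\b)^{\l}/2^{4\l+1}$, i.e. \eqref{f21}, which the paper instead obtains by analytic continuation in $\nu$ through \eqref{nuinte1} and the binomial theorem.

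The genuine gap is in the $g$-moments, which you dismiss as routine. You propose to interchange integration and insert the Mellin transform of the Koshliakov kernel, reducing $\int_{0}^{\infty}y^{\pm s/2}g(y)\,dy$ to moments of $f$. After the substitution $u=4\pi\sqrt{yt}$ the inner integral is $\int_{0}^{\infty}u^{2\nu+1}F_{s}(u)\,du$ with $\nu=\pm s/2$, i.e. the Mellin transform of $F_s$ at $w=2\pm s$. Since $F_{s}(u)$ oscillates with amplitude $u^{-1/2}$ at infinity (see \eqref{asymbess}--\eqref{asymbess2}), that transform converges only for $\Re(w)<3/2$, whereas $\Re(2\pm s)\geq 3/2$ throughout the strip $-\tfrac12<\Re(s)<\tfrac12$ in which Guinand's formula is applied; moreover the companion $f$-moment, e.g. $\int_{0}^{\infty}f(t)t^{-s/2-1}\,dt$, diverges at $t=0$ because $\mathcal{A}(t)+\mathcal{A}(-t)\to 2\mathcal{A}(0)\neq 0$. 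So the proposed interchange produces divergent integrals on both sides and cannot be carried out as stated; rescuing it would need a meromorphic continuation in the moment exponent with pole/zero bookkeeping, which is not routine. This matters because it is exactly this moment that carries the main term: your claim that $\int_{0}^{\infty}y^{s/2}f(y)\,dy$ yields \emph{both} the elementary term and the hypergeometric term $\z(-s)\,C\,\mathcal{A}(s,\l,\sqrt{\a},\sqrt{\b},0)$ is wrong --- by \eqref{f1} that moment is elementary (its ${}_2F_1$ collapses to $(1-x)^{-1}$), and the hypergeometric term of \eqref{anathmbdkzeqn} comes solely from $\int_{0}^{\infty}y^{s/2}g(y)\,dy$. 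The paper avoids your obstruction entirely by using the closed Bessel-product form of $g$: since $g$ decays exponentially, its moments converge absolutely and are evaluated by the tabulated integral \eqref{xik} for $\int_{0}^{\infty}y^{a-1}I_{\l}(by)K_{\l}(cy)\,dy$, giving \eqref{xik3} and \eqref{xik4}. You should replace the Mellin-kernel step by this direct evaluation; you would also still need the final analytic continuation in $\a,\b$ and in $s$ beyond the strip, including the pole cancellation at $s=-1$ carried out in \eqref{pp1}--\eqref{pp2}.
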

\begin{remark}
We do not know if there exist transformations analogous to the above for the remaining values of $s$, that is, the ones not satisfying \textup{Re}$(s+\l)>0$ or $s=-\l$. However, we do show by analytic continuation that the above result can be extended to \textup{Re}$(\l)>-1$. This is done in Section \ref{ac}. Of course, one can analytically continue Theorem \ref{anathmbdkz} to \textup{Re}$(\nu)>-m$, where $m\in\mathbb{N}$, however, we refrain ourselves from doing this as the resulting transformation becomes complicated.
\end{remark}
The above theorem as well as its analytic continuation given in Theorem \ref{anathmbdkzac} give many interesting corollaries. We state below the ones resulting from Theorem \ref{anathmbdkz} and reserve those resulting from Theorem \ref{anathmbdkzac} for Section \ref{ac}.
\begin{corollary}\label{dixmolcor}
Equation \eqref{dixmol} holds for \textup{Re}$(\b)>0$ and \textup{Re}$(\l)>-1$.
\end{corollary}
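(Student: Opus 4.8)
The plan is to recover \eqref{dixmol} as the confluent limit $\a\to\b$ of Theorem \ref{anathmbdkz} in the case $s=-\l$. First I would put $s=-\l$ in \eqref{anathmbdkzeqn}, so that $\sigma_s(n)=\sigma_{-\l}(n)$, $\G(s+\l+1)=1$, and $h(-\l,\l)=(\a-\b)^{\l}/2^{4\l+1}$ by \eqref{hsl}. Dividing both sides by $(\sqrt{\a}-\sqrt{\b})^{\l}$ and letting $\a\to\b$ within the admissible sector (so that $\mathrm{Re}(\sqrt{\a})>\mathrm{Re}(\sqrt{\b})$ throughout), I would use the small-argument asymptotics $I_{\l}(z)\sim (z/2)^{\l}/\G(\l+1)$ to evaluate the left-hand side termwise:
\begin{equation*}
\frac{I_{\l}\left(2\pi\overline{\epsilon}\sqrt{n}(\sqrt{\a}-\sqrt{\b})\right)}{(\sqrt{\a}-\sqrt{\b})^{\l}}\longrightarrow\frac{(\pi\overline{\epsilon}\sqrt{n})^{\l}}{\G(\l+1)},\qquad K_{\l}\left(2\pi\overline{\epsilon}\sqrt{n}(\sqrt{\a}+\sqrt{\b})\right)\longrightarrow K_{\l}\left(4\pi\overline{\epsilon}\sqrt{n\b}\right),
\end{equation*}
and likewise for the $\epsilon$-terms. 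Since $\overline{\epsilon}^{\l}=e^{-\pi i\l/4}$ and $\epsilon^{\l}=e^{\pi i\l/4}$, the left-hand side of \eqref{anathmbdkzeqn}, after division by $(\sqrt{\a}-\sqrt{\b})^{\l}$, tends to $\pi^{\l}/(2\G(\l+1))$ times the left-hand side of \eqref{dixmol}.

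Next I would pass to the limit in the four pieces of the right-hand side of \eqref{anathmbdkzeqn}. As $\a\to\b$ the argument of every ${}_2F_1$ appearing (including the ones inside $\mathcal{A}$, see \eqref{funa}) tends to $0$, so each hypergeometric factor tends to $1$; the only indeterminacy is the ratio inside the $\mathcal{A}$-terms, which is resolved by
\begin{equation*}
\frac{\sqrt{\a+in}-\sqrt{\b+in}}{\sqrt{\a}-\sqrt{\b}}\longrightarrow\frac{\sqrt{\b}}{\sqrt{\b+in}}\qquad(\a\to\b),
\end{equation*}
together with its conjugate with $-in$. Bookkeeping the powers of $2,\pi,\b$ and the Gamma factors, and then multiplying through by $2\G(\l+1)/\pi^{\l}$ to undo the constant produced on the left, I expect the $h$-term to yield $-\tf{1}{2}\b^{\l/2}\z(\l+1)$, the term $-\z(-s)(\cdots)^{\l}/(2\l)$ to yield $-\G(\l)\z(\l)/(2\pi\sqrt{\b})^{\l}$, the third term to yield $\b^{\l/2-1}\z(\l)/(2\pi)$, and the $\mathcal{A}$-series (with $\mathcal{A}(\pm n):=\mathcal{A}(s,\l,\sqrt{\a},\sqrt{\b},\pm n)$) to yield $(\b^{\l/2+1}/\pi)\sum_{n\ge1}\sigma_{-\l}(n)/(n^2+\b^2)$, the last by combining $\mathcal{A}(n)+\mathcal{A}(-n)$ through $1/(\b+in)+1/(\b-in)=2\b/(\b^2+n^2)$. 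These are exactly the four terms on the right of \eqref{dixmol}.

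The main obstacle is justifying the termwise passage to the limit in the two infinite series. After division by $(\sqrt{\a}-\sqrt{\b})^{\l}$, the factor $I_{\l}$ (which vanishes like $(\sqrt{\a}-\sqrt{\b})^{\l}$) becomes bounded as $\a\to\b$, and the exponential decay of the $K_{\l}$-factor furnishes a summable majorant uniform for $\a$ in a neighbourhood of $\b$; for the $\mathcal{A}$-series one checks that $\mathcal{A}(n)+\mathcal{A}(-n)$, divided by $(\sqrt{\a}-\sqrt{\b})^{\l}$, is $O(n^{-2})$ uniformly near $\a=\b$. Since $\sigma_{-\l}(n)=O(1)$ for $\mathrm{Re}(\l)>0$, both series are dominated by summable majorants, so dominated convergence lets the limits be taken inside the sums.

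This establishes \eqref{dixmol} for $\mathrm{Re}(\l)>0$ and $\mathrm{Re}(\b)>0$. To reach $\mathrm{Re}(\l)>-1$ I would argue by analytic continuation in $\l$: for fixed $\b$ with $\mathrm{Re}(\b)>0$, both sides of \eqref{dixmol} are holomorphic on the half-plane $\mathrm{Re}(\l)>-1$. Indeed $\sum_{n\ge1}\sigma_{-\l}(n)/(n^2+\b^2)$ converges locally uniformly there because $\sigma_{-\l}(n)=O_{\eta}(n^{|\mathrm{Re}(\l)|+\eta})$ for every $\eta>0$ and $|\mathrm{Re}(\l)|<1$, while the apparent simple poles at $\l=0$ coming from $\G(\l)\z(\l)$ and from $\z(\l+1)$ cancel, leaving a removable singularity (at $\l=0$ one recovers precisely Koshliakov's identity \eqref{koshvor}). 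Two functions holomorphic on the connected region $\mathrm{Re}(\l)>-1$ that agree on $\mathrm{Re}(\l)>0$ agree throughout, which completes the proof.
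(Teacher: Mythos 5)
Your proposal is correct and follows essentially the same route as the paper: set $s=-\l$ in Theorem \ref{anathmbdkz}, divide both sides by $(\sqrt{\a}-\sqrt{\b})^{\l}$, pass to the limit $\a\to\b$ termwise (where the paper verifies the $I_{\l}$-limit carefully through the branch-sensitive definition \eqref{besseli} and the series \eqref{sumbesselj}, which your small-argument asymptotic shortcuts), multiply by $2\G(\l+1)/\pi^{\l}$, and finally extend from \textup{Re}$(\l)>0$ to \textup{Re}$(\l)>-1$ by analytic continuation. Your explicit dominated-convergence and holomorphy arguments simply flesh out steps the paper leaves implicit, so there is no substantive difference in method.
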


\noindent
Another important result is
\begin{corollary}\label{dixmolcorgen}
For \textup{Re}$(\b)>0$ and \textup{Re}$(s)>-\frac{1}{2}$,
\begin{align}\label{dixmolcorgeneqn}
&\sum_{n=1}^{\infty}\sigma_{s}(n)e^{-2\pi\sqrt{2n\b}}\cos\left(2\pi\sqrt{2n\b}\right)\nonumber\\
&=-\frac{1}{2}\zeta(-s)+\frac{\G\left(s+\frac{3}{2}\right)\zeta(-s)}{2\sqrt{\pi}(2\pi\b)^{s+1}}+\frac{\sqrt{\b}\G\left(s+\frac{3}{2}\right)}{2^{s+2}\pi^{s+\frac{3}{2}}}\sum_{n=1}^{\infty}\sigma_s(n)\left\{(\b+in)^{-s-\frac{3}{2}}+(\b-in)^{-s-\frac{3}{2}}\right\}.
\end{align}
\end{corollary}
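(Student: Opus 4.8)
The plan is to specialize Theorem \ref{anathmbdkz} to $\l=\tfrac12$, divide both sides of \eqref{anathmbdkzeqn} by $(\sqrt{\a}-\sqrt{\b})^{1/2}$, and then let $\a\to\b$. The value $\l=\tfrac12$ is forced by matching the exponent $-s-\tfrac32$ in \eqref{dixmolcorgeneqn}: from \eqref{funa} one checks that $\mathcal{A}(s,\l,\sqrt{\a},\sqrt{\b},t)/(\sqrt{\a}-\sqrt{\b})^{\l}$ tends, as $\a\to\b$, to a constant multiple of $(\b+it)^{-s-\l-1}$ (the ${}_2F_1$-factor $\to 1$ since its argument $\to 0$), and we need $s+\l+1=s+\tfrac32$. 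The hypotheses also match: for $\l=\tfrac12$ the conditions $\textup{Re}(\l)>0$ and $\textup{Re}(s+\l)>0$ of Theorem \ref{anathmbdkz} reduce exactly to $\textup{Re}(s)>-\tfrac12$, and the limit $\a\to\b$ is taken through values satisfying $\textup{Re}(\sqrt{\a})>\textup{Re}(\sqrt{\b})$.

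For the left-hand side I would use the elementary closed forms $I_{1/2}(z)=\sqrt{2/(\pi z)}\sinh z$ and $K_{1/2}(z)=\sqrt{\pi/(2z)}\,e^{-z}$, valid for complex $z$ by analytic continuation, to rewrite each product $I_{1/2}K_{1/2}$ in \eqref{anathmbdkzeqn} as a combination of exponentials. Collecting the $\epsilon$- and $\overline{\epsilon}$-terms and using $\epsilon=e^{i\pi/4}$, $\overline{\epsilon}=e^{-i\pi/4}$, the summand collapses to $\frac{1}{2\pi\sqrt{2n(\a-\b)}}\bigl[g(\b)-g(\a)\bigr]$, where $g(x)=e^{-2\pi\sqrt{2nx}}\bigl(\cos(2\pi\sqrt{2nx})-\sin(2\pi\sqrt{2nx})\bigr)$. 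After dividing by $(\sqrt{\a}-\sqrt{\b})^{1/2}$, the factorization $\a-\b=(\sqrt{\a}-\sqrt{\b})(\sqrt{\a}+\sqrt{\b})$ converts this into a difference quotient in the variable $\sqrt{\a}$; since $\frac{d}{du}\bigl[e^{-u}(\cos u-\sin u)\bigr]=-2e^{-u}\cos u$, the limit of each summand is a constant multiple of $\sigma_s(n)\,e^{-2\pi\sqrt{2n\b}}\cos(2\pi\sqrt{2n\b})$, and the whole left side tends to $\frac{\sqrt{2}}{\b^{1/4}}$ times the series on the left of \eqref{dixmolcorgeneqn}.

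For the right-hand side I would pass to the limit term by term. The $h(s,\l)$-term vanishes because $\textup{Re}(s+\tfrac12)>0$ forces $h(s,\tfrac12)=0$ in \eqref{hsl}; the term $-\frac{\zeta(-s)}{2\l}\bigl(\tfrac{\sqrt{\a}-\sqrt{\b}}{\sqrt{\a}+\sqrt{\b}}\bigr)^{\l}$ contributes $-\zeta(-s)/(\sqrt{2}\,\b^{1/4})$; the ${}_2F_1$-term, whose argument tends to $0$, contributes the explicit $\zeta(-s)$-power term; and the series term, using the limit of $\mathcal{A}/(\sqrt{\a}-\sqrt{\b})^{1/2}$ at $t=\pm n$ noted above, produces $\sum_n\sigma_s(n)\{(\b+in)^{-s-3/2}+(\b-in)^{-s-3/2}\}$ with the correct constant. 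Multiplying the resulting identity through by $\b^{1/4}/\sqrt{2}$ and simplifying the $\Gamma$- and powers-of-$2\pi$ factors then yields \eqref{dixmolcorgeneqn}.

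The main obstacle is rigour in the two limiting passages rather than the algebra: one must justify interchanging $\lim_{\a\to\b}$ with the infinite sums on both sides. On the left this is a $0/0$ situation resolved by differentiation, so uniform control of the difference quotients in $n$ is needed; the asymptotic estimate for the product of Bessel functions recorded in the introduction guarantees that the differentiated summand decays like $e^{-2\pi\sqrt{2n\b}}$, giving uniform convergence for $\a$ in a neighbourhood of $\b$. On the right the series term is dominated, uniformly near $\a=\b$, by $\sum_n\sigma_s(n)\,|\b+in|^{-\textup{Re}(s)-3/2}$, which converges for $\textup{Re}(s)>-\tfrac12$, so a dominated-convergence argument legitimizes the term-by-term limit.
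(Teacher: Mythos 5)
Your proposal is correct and takes essentially the same route as the paper: both specialize Theorem \ref{anathmbdkz}, divide by $(\sqrt{\a}-\sqrt{\b})^{\l}$, and pass to the confluent limit $\a\to\b$ with $\l=\tfrac12$, where the half-integer Bessel functions collapse to exponentials. The only (immaterial) difference is the order of operations: the paper first takes the limit for general $\l$, obtaining an intermediate identity in $K_{\l}(4\pi e^{\pm i\pi/4}\sqrt{n\b})$ complementary to \eqref{dixmol}, and only then sets $\l=\tfrac12$ via \eqref{khalf}, whereas you set $\l=\tfrac12$ at the outset and evaluate the limit through the closed forms of $I_{1/2}$ and $K_{1/2}$ by a difference-quotient argument.
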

The reason this identity is important because, specializing $s$ to be zero in the above identity gives an analogue of a result of Hardy \cite[Equation (2.12)]{hardyqjpam1915} which he used in his study of the Gauss circle problem to prove that
\begin{equation}\label{harob}
\sum_{n\leq x}r_2(n)-\pi x=\Omega(x^{1/4}).
\end{equation}
This analogue is
\begin{align}\label{dixmolcorgeneqns0}
\sum_{n=1}^{\infty}d(n)e^{-2\pi\sqrt{2n\b}}\cos\left(2\pi\sqrt{2n\b}\right)=\frac{1}{4}-\frac{1}{16\pi\b}+\frac{\sqrt{\b}}{8\pi}\sum_{n=1}^{\infty}d(n)\left\{(\b+in)^{-\frac{3}{2}}+(\b-in)^{-\frac{3}{2}}\right\}.
\end{align}
We end this section with a rather exotic transformation that results from Theorem \ref{anathmbdkz}.
\begin{corollary}\label{exotic}
For $|\arg \a|<\pi/2, |\arg \b|<\pi/2$ and \textup{Re}$(\sqrt{\a})>$ \textup{Re}$(\sqrt{\b})>0$,
\begin{align}\label{exoticeqn}
&\sum_{n=1}^{\infty}\sigma_{-1}(n)\bigg\{I_{1}\left(2\pi\overline{\epsilon}(\sqrt{n\a}-\sqrt{n\b})\right)K_{1}\left(2\pi\overline{\epsilon}(\sqrt{n\a}+\sqrt{n\b})\right)\nonumber\\
&\qquad\qquad\quad+I_{1}\left(2\pi\epsilon(\sqrt{n\a}-\sqrt{n\b})\right)K_{1}\left(2\pi\epsilon(\sqrt{n\a}+\sqrt{n\b})\right)\bigg\}\nonumber\\
&=-\frac{\pi^3(\a-\b)}{48}+\frac{1}{2}\left(\frac{\sqrt{\a}-\sqrt{\b}}{\sqrt{\a}+\sqrt{\b}}\right)\left\{\g+\log(2\pi\sqrt{\a\b})-1-\left(\frac{\sqrt{\a}+\sqrt{\b}}{\sqrt{\a}-\sqrt{\b}}\right)^2\log\left(\frac{4\sqrt{\a\b}}{(\sqrt{\a}+\sqrt{\b})^2}\right)\right\}\nonumber\\
&\quad+\frac{1}{2}\sum_{n=1}^{\infty}\sigma_{-1}(n)\left(\frac{\sqrt{\a+in}-\sqrt{\b+in}}{\sqrt{\a+in}+\sqrt{\b+in}}+\frac{\sqrt{\a-in}-\sqrt{\b-in}}{\sqrt{\a-in}+\sqrt{\b-in}}\right).
\end{align}
\end{corollary}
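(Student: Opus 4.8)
The plan is to obtain Corollary \ref{exotic} as the limiting case $\lambda\to1$ of Theorem \ref{anathmbdkz}, taken along the branch $s=-\lambda$. For $\lambda$ in a punctured neighbourhood of $1$ (so that $\textup{Re}(\lambda)>0$ and $\lambda\neq1$), the pair $(s,\lambda)=(-\lambda,\lambda)$ satisfies the hypothesis $s=-\lambda$, whence \eqref{anathmbdkzeqn} holds with $h(s,\lambda)=(\alpha-\beta)^{\lambda}/2^{4\lambda+1}$ from \eqref{hsl}. On the left-hand side, letting $\lambda\to1$ replaces $\sigma_{-\lambda}(n),I_{\lambda},K_{\lambda}$ by $\sigma_{-1}(n),I_{1},K_{1}$; the interchange of limit and summation is justified by the exponential-decay asymptotics for the summand recorded in the introduction, which dominate the series uniformly for $\lambda$ near $1$. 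Hence the left-hand side tends to that of \eqref{exoticeqn}, and it remains to compute the limit of each term on the right.

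Write $T:=(\sqrt{\alpha}-\sqrt{\beta})/(\sqrt{\alpha}+\sqrt{\beta})$. The first term of \eqref{anathmbdkzeqn} and the series term are regular at $\lambda=1$, so one simply substitutes $s=-1,\lambda=1$. In the first term, using $\zeta(2)=\pi^{2}/6$ gives exactly $-\pi^{3}(\alpha-\beta)/48$. For the series term I would first simplify $\mathcal{A}(-\lambda,\lambda,\sqrt{\alpha},\sqrt{\beta},t)$ from \eqref{funa} at $\lambda=1$: since the Gauss function collapses by $\pFq{2}{1}{2,1}{2}{x}=(1-x)^{-1}$, a short calculation reduces $\mathcal{A}(-1,1,\sqrt{\alpha},\sqrt{\beta},t)$ to $\tfrac{1}{4}(\sqrt{\alpha+it}-\sqrt{\beta+it})/(\sqrt{\alpha+it}+\sqrt{\beta+it})$; combined with the prefactor $2^{3\lambda-2}\pi^{\lambda-1}/\Gamma(\lambda+1)\to2$ this yields precisely the last line of \eqref{exoticeqn}.

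The delicate point is the middle. The second and third terms both carry the factor $\zeta(-s)=\zeta(\lambda)$, which has a simple pole at $\lambda=1$, so I would group them as $\zeta(\lambda)\,T^{\lambda}B(\lambda)$ with
\begin{equation*}
B(\lambda)=-\frac{1}{2\lambda}+\frac{2^{3\lambda-2}\pi^{\lambda-1}}{\sqrt{\alpha\beta}\,\Gamma(\lambda+1)}\left(\frac{1}{\sqrt{\alpha}}+\frac{1}{\sqrt{\beta}}\right)^{-2\lambda}\pFq{2}{1}{2\lambda,\lambda}{\lambda+1}{T^{2}}.
\end{equation*}
Using $(1-T^{2})^{-1}=(\sqrt{\alpha}+\sqrt{\beta})^{2}/(4\sqrt{\alpha\beta})$ one checks that $B(1)=0$, so the pole cancels. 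Expanding $\zeta(\lambda)=(\lambda-1)^{-1}+\gamma+O(\lambda-1)$ then gives the finite value $T\,B'(1)$, and one verifies this equals the middle term of \eqref{exoticeqn}. It is worth noting that the constant $\gamma$ does \emph{not} come from the Laurent expansion of $\zeta$ (whose $\gamma$ multiplies the vanishing $B(1)$); it arises from $\psi(2)=1-\gamma$ produced by differentiating $1/\Gamma(\lambda+1)$, while $\log2,\log\pi$ and $\log\sqrt{\alpha\beta}$ come from differentiating $2^{3\lambda-2},\pi^{\lambda-1}$ and the powers of $\alpha,\beta$, and the term with $\bigl((\sqrt{\alpha}+\sqrt{\beta})/(\sqrt{\alpha}-\sqrt{\beta})\bigr)^{2}\log\bigl(4\sqrt{\alpha\beta}/(\sqrt{\alpha}+\sqrt{\beta})^{2}\bigr)$ comes from differentiating the hypergeometric function.

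The main obstacle is therefore the evaluation of $B'(1)$, and within it the parameter-derivative of $\pFq{2}{1}{2\lambda,\lambda}{\lambda+1}{T^{2}}$ at $\lambda=1$, where all three parameters vary simultaneously. I would handle this through the Euler-type representation $\,{}_2F_1(2\lambda,\lambda;\lambda+1;x)=\lambda\int_{0}^{1}t^{\lambda-1}(1-xt)^{-2\lambda}\,dt$, which turns the differentiation into an elementary integral and produces the $\log(1-T^{2})$ that surfaces in the answer. Apart from this computation, the only remaining care is the two interchanges of limit with summation (on the left-hand side and in the series term), both of which follow from the uniform decay estimates already available.
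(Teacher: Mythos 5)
Your proposal is correct, and it reaches \eqref{exoticeqn} by a genuinely different route than the paper. Writing $T=(\sqrt{\a}-\sqrt{\b})/(\sqrt{\a}+\sqrt{\b})$, the paper keeps $\l$ free (with $\textup{Re}(\l)>1$ or $\l=1$) and lets $s\to-1$ in Theorem \ref{anathmbdkz}: the two terms carrying $\zeta(-s)$ are merged into the limit $L_1(\a,\b,\l)$ of \eqref{lalim}, computed by expanding $\zeta(-s)$, $b^s$, $\G(\l+s+1)$ and $\pFq21{\l-s,-s}{\l+1}{T^2}$ in powers of $s+1$, the hypergeometric derivative being taken termwise via $\frac{d}{d\xi}(\xi)_k=(\xi)_k\left(\psi(\xi+k)-\psi(\xi)\right)$ as in \eqref{derev}, so that $\g$ enters as $-\psi(\l)$ at $\l=1$; this produces the intermediate identity \eqref{exotics-1} for general $\l$, specialized to $\l=1$ only at the very end. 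You instead move along the diagonal $s=-\l$ and let $\l\to1$, so the $h$-term of \eqref{hsl} is active along the whole path (arguably more transparent than the paper's handling, where the nonzero $h$ appears only at the endpoint $s=-\l$), and the entire singular part is the single function $\zeta(\l)T^{\l}B(\l)$; the pole cancellation becomes the clean statement $B(1)=0$, the middle term is $T\,B'(1)$, and the derivative of $\pFq21{2\l,\l}{\l+1}{T^2}$ — all three parameters now moving — is dispatched by the Euler integral $\l\int_0^1t^{\l-1}(1-T^2t)^{-2\l}\,dt$, with $\g$ arising from $\psi(2)=1-\g$. I checked your two key claims: $B(1)=0$ does hold, and the Euler-integral evaluation of $T\,B'(1)$ reproduces exactly the middle line of \eqref{exoticeqn}, the logarithms closing up because $1-T^2=4\sqrt{\a\b}/(\sqrt{\a}+\sqrt{\b})^2$; likewise your reductions of the $h$-term (via $\zeta(2)=\pi^2/6$) and of $\mathcal{A}(-1,1,\sqrt{\a},\sqrt{\b},t)$ agree with the paper's. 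The trade-off: the paper's route buys the more general transformation \eqref{exotics-1}, valid for $\textup{Re}(\l)>1$ and reusable for other specializations, and only ever differentiates in the $s$-slots; your route is shorter and self-contained for this corollary, but needs the Euler-integral device and a (routine, and only sketched by you) justification that $\l\to1$ may be taken inside both infinite series, since on your path $\sigma_{-\l}(n)$ and the Bessel functions vary simultaneously.
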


\noindent
That the special case $\l=1$ of \eqref{dixmol} results from the above corollary after dividing both sides by $\sqrt{\a}-\sqrt{\b}$ and then letting $\a\to\b$ can then be seen without much effort.

This paper is organized as follows. In Section \ref{ex}, we record the well-known asymptotic expansions of Bessel functions as their argument $\xi\to0$ or as $|\xi|\to\infty$. Assuming that the parameters involved in \eqref{grn693} and \eqref{koshcertain} are complex, we find the exact conditions on them so that these equations are valid. This is crucial in deriving Theorem \ref{tfockathm}. Sections \ref{proof1} and \ref{proof2} are devoted to the proofs of Theorems \ref{tfockathm} and \ref{anathmbdkz} respectively. Corollaries \ref{dixmolcor}, \ref{dixmolcorgen} and \ref{exotic} are proved in Section \ref{cor1}. Theorem \ref{anathmbdkz} admits analytic continuation to Re$(\nu)>-1$. This analytic continuation is obtained in Theorem \ref{anathmbdkzac} of Section \ref{ac}. The interesting corollaries that follow from this theorem, namely, Corollaries \ref{ramanujan-type} - \ref{soninetrans} are then derived. We end the paper with Section \ref{cr} consisting of concluding remarks and directions for further work.

\section{Extending the validity of certain integral evaluations}\label{ex}
Here we extend the validity of certain integral evaluations needed in the sequel by relaxing the conditions on the parameters involved. This is extremely crucial, for, most of the times we do require the widest possible conditions under which these formulas are valid, to prove our results. To do that, however, we need the following asymptotic formulas of the Bessel functions $J_{s}(\xi), Y_{s}(\xi)$, and $K_{s}(\xi)$, as $|\xi|\to\infty$, given by 
\cite[p.~199 and p.~202]{watson-1966a}
\begin{align}
J_{s}(\xi)&\sim \left(\frac{2}{\pi \xi}\right)^{\tf12}\bigg(\cos w\sum_{n=0}^{\infty}\frac{(-1)^n(s, 2n)}{(2\xi)^{2n}} -\sin w\sum_{n=0}^{\infty}\frac{(-1)^n(s, 2n+1)}{(2\xi)^{2n+1}}\bigg),\label{asymbess}\\
Y_{s}(\xi)&\sim \left(\frac{2}{\pi \xi}\right)^{\tf12}\bigg(\sin w\sum_{n=0}^{\infty}\frac{(-1)^n(s, 2n)}{(2\xi)^{2n}}+\cos w\sum_{n=0}^{\infty}\frac{(-1)^n(s, 2n+1)}{(2\xi)^{2n+1}}\bigg),\label{asymbess1}\\
K_{s}(\xi)&\sim \left(\frac{\pi}{2 \xi}\right)^{\tf12}e^{-\xi}\sum_{n=0}^{\infty}\frac{(s, n)}{(2\xi)^{n}},\label{asymbess2}
\end{align}
for $|\arg \xi|<\pi$. Here $w=\xi-\tfrac{1}{2}\pi s-\tfrac{1}{4}\pi$ and $(s,n)=\frac{\Gamma(s+n+1/2)}{\Gamma(n+1)\Gamma(s-n+1/2)}$.

Also from \cite[Equation (2.11)]{koshkernel},
\begin{equation}\label{fasy}
F_s(\rho t)<\!\!<_{s}\begin{cases}
\textup{max}(1, |\log(\rho t)|), &\mbox{ if } \quad s=0, 0 < \rho t \le 1, \\
(\rho t)^{-|\textup{Re}(s)|}, &\mbox{ if } \quad s\neq 0,  0 < \rho t\le 1, \\
				(\rho t)^{-1/2}, &\mbox{ if } \quad \rho t \ge 1.			
\end{cases}
\end{equation}
We begin with identity \eqref{grn693}. In \cite[p.~693, formula 6.596.7]{grn} as well as in \cite[p.~416, Equation (2)]{watson-1966a}, this result is stated to be valid for $a>0, \rho>0$, Re$(s)>-1$, and $|\arg\xi|<\pi/2$. However, in a footnote on page 416 in \cite{watson-1966a}, Watson says that with certain limitations, one can take $a$ and $\rho$ to be complex. Since the precise conditions on $a$ and $\rho$ are not given there and since we require the one for $a$ in order to prove our results (see \eqref{grn6931} and \eqref{grn6932} below), we give them here. 

Note that from \eqref{asymbess}, as $t\to\infty$,
\begin{equation*}
J_{s}(\rho t)=O\left(\sqrt{\frac{2}{\pi |\rho| t}}\left|\cos\left(\rho t-\frac{\pi s}{2}-\frac{\pi}{4}\right)\right|\right)=O_{s, \rho}\left(e^{|\textup{Im}(\rho)|t}\right),
\end{equation*}
where $|\arg(\rho)|=|\arg(\rho t)|<\pi$. Also, from \eqref{asymbess2},
\begin{equation*}
K_{\nu}\left(a\sqrt{t^2+\xi^2}\right)=O_{\nu, \xi}\left(e^{-\textup{Re}(a)t}\right)
\end{equation*}
as $t\to\infty$. Thus, to secure the convergence at the upper limit of integration, we require Re$(a)>|$\textup{Im}$(\rho)|$.

Now for complex values of $\rho, b, s$ and $\xi$, the precise conditions for the validity of \eqref{koshcertain} can be found to be Re$(\rho)>0$, $|\textup{Im}(\rho)|<$Re$(b\epsilon)$ and $|\textup{Im}(\rho)|<$Re$(b\overline{\epsilon})$, Re$(s)>-1$ and $|\arg(\xi)|<\frac{\pi}{4}$. However, since each of the variables $\rho, b$ and $\xi$ are positive in Theorem \ref{tfockathm}, we only show the validity of \eqref{koshcertain} for Re$(s)>-1$, which is required, for example, in obtaining \eqref{grn69312f}.

The condition Re$(s)>-1$ is needed to secure the convergence of the integral in \eqref{koshcertain} at the lower limit $0$. To see this, note that as $t\to 0$, we have \cite[p.~375, equations (9.6.9), (9.6.8)]{stab}
\begin{equation}\label{k0}
K_{s}(\rho t)\sim\begin{cases}
-\log(\rho t),\hspace{8mm}\text{if}\hspace{1mm}s=0,\\
\frac{1}{2}\G(s)\left(\frac{\rho t}{2}\right)^{-s},\hspace{1mm}\text{if}\hspace{1.5mm}\textup{Re}(s)>0,
\end{cases}
\end{equation}
and \cite[p.~360, equations (9.1.8), (9.1.9)]{stab}
\begin{equation}\label{y0}
Y_{s}(\rho t)\sim\begin{cases}
\frac{2}{\pi}\log(\rho t),\hspace{11mm}\text{if}\hspace{1mm}s=0,\\
-\frac{1}{\pi}\G(s)\left(\frac{\rho t}{2}\right)^{-s},\hspace{1mm}\text{if}\hspace{1.5mm}\textup{Re}(s)>0.
\end{cases}
\end{equation}
First let Re$(s)>0$. Note that
\begin{equation*}
\frac{K_{\nu}\left(b \sqrt{\xi^2+it^2}\right)}{(\xi^2+it^2)^{\mu/2}}+\frac{K_{\nu}\left(b \sqrt{\xi^2-it^2}\right)}{(\xi^2-it^2)^{\mu/2}}
\end{equation*}
approaches a constant depending on $b$, $\xi$ and $\mu$ as $t\to 0$, where $\xi\neq0$ if Re$(\nu)>0$. Also, from \eqref{k0} and \eqref{y0}, we deduce that
\begin{equation*}
M_{s}(\rho t)\sim\frac{2}{\pi}\G(s)\left(\frac{\rho t}{2}\right)^{-s}.
\end{equation*}
Since the integrand on the left-hand side of \eqref{koshcertain} contains $t^{s+1}$, convergence is clearly secured for Re$(s)>0$. In a similar way, it can be seen using \cite[Equation (2.14)]{dunster} and the corresponding asymptotic formulas for $J$ and $Y$-Bessel functions of purely imaginary order that the integral converges when Re$(s)=0$ too.

Now let $-1<$ Re$(s)<0$. Since the $K$-Bessel function is an even function of its order, we have from \eqref{k0},
\begin{equation}\label{kbigo}
\frac{2}{\pi}K_{s}(\rho t)=\frac{2}{\pi}K_{-s}(\rho t)\sim\frac{1}{\pi}\G(-s)\left(\frac{\rho t}{2}\right)^{s}.
\end{equation}
Unfortunately, the relation between $Y_{-s}(\rho t)$ and $Y_{s}(\rho t)$ is not as straightforward as in the case of $K_{-s}(\rho t)$ and $K_{s}(\rho t)$, but using the standard formulas in the theory of Bessel functions \cite[Formulas 8.482, 8.483, 8.484]{grn} it can be seen that
\begin{align}\label{ynu-nu}
Y_{s}(\rho t)=Y_{-s}(\rho t)\sec(\pi s)-J_{s}(\rho t)\tan(\pi s).
\end{align}
Now from \eqref{y0},
\begin{align}\label{y0-}
Y_{-s}(\rho t)\sim-\frac{1}{\pi}\G(-s)\left(\frac{\rho t}{2}\right)^{-s},
\end{align}
whereas from \cite[p.~360, Formula 9.1.7]{stab}, for $s\neq-1, -2, -3, \cdots$,
\begin{align}\label{j0}
J_{s}(\rho t)\sim\frac{\left(\rho t/2\right)^{s}}{\G(s+1)}
\end{align}
as $t\to 0$. Hence \eqref{ynu-nu}, \eqref{y0-} and \eqref{j0} give
\begin{equation}\label{ybigo}
Y_{s}(\rho t)=O_{s, \rho}\left(t^{\textup{Re}(s)}\right).
\end{equation}
Thus \eqref{kbigo} and \eqref{ybigo} give
\begin{equation*}
t^{s+1}M_{s}(\rho t)=O_{s, \rho}\left(t^{2\textup{Re}(s)+1}\right),
\end{equation*}
which implies that the convergence of the integral at $0$ is secured as long as Re$(s)>-1$. By \cite[p.~30, Theorem 2.3]{temme}, the integral in \eqref{koshcertain} represents a holomorphic function of $s$ for Re$(s)>-1$. Since the right-hand side is obviously analytic for Re$(s)>-1$, we conclude, by analytic continuation, that \eqref{koshcertain} is valid as long as Re$(s)>-1$.
%
%

\section{Proof of Theorem \ref{tfockathm}}\label{proof1}

We first prove the result for $z>w>0$ and later extend it by analytic continuation to complex $z$ and $w$ such that $|\arg z|<\pi/4, |\arg w|<\pi/4$ and \textup{Re}$(z)\geq$ \textup{Re}$(w)>0$. Also from the hypotheses of the theorem, we have $\rho>0$.

Letting $\xi=we^{-i\pi/4}$, and $a=y e^{i\pi/4}$ with $y>0$ in \eqref{grn693}, we see that for $\rho>0$ and Re$(s)>-1$, we have
\begin{align}\label{grn6931}
\int_{0}^{\infty}t^{s+1}J_{s}(\rho t)\frac{K_{\nu}\left(y\sqrt{w^2+it^2}\right)}{(w^2+it^2)^{\nu/2}}\, dt=\frac{\overline{\epsilon}^{(1+\nu+s)}\rho^{s}w^{1+s-\nu}}{y^{\nu}}\frac{K_{1+s-\nu}\left(w\overline{\epsilon}\sqrt{\rho^2+iy^2}\right)}{(\rho^2+ iy^2)^{\frac{1+s-\nu}{2}}}.
\end{align}
Similarly letting $\xi=we^{i\pi/4}$, and $a=y e^{-i\pi/4}$ with $y>0$, in \eqref{grn693}, we see that for $\rho>0$ and Re$(s)>-1$, we have
\begin{align}\label{grn6932}
\int_{0}^{\infty}t^{s+1}J_{s}(\rho t)\frac{K_{\nu}\left(y\sqrt{w^2-it^2}\right)}{(w^2-it^2)^{\nu/2}}\, dt=\frac{\epsilon^{1+\nu+s}\rho^{s}w^{1+s-\nu}}{y^{\nu}}\frac{K_{1+s-\nu}\left(w\epsilon\sqrt{\rho^2-iy^2}\right)}{(\rho^2- iy^2)^{\frac{1+s-\nu}{2}}}.
\end{align}
Adding \eqref{grn6931} and \eqref{grn6932}, we deduce that for $y>0$, $\rho>0$ and Re$(s)>-1$,
\begin{align}\label{grn69312}
&\int_{0}^{\infty}t^{s+1}J_{s}(\rho t)\left\{\frac{K_{\nu}\left(y\sqrt{w^2+it^2}\right)}{(w^2+it^2)^{\nu/2}}+\frac{K_{\nu}\left(y\sqrt{w^2-it^2}\right)}{(w^2-it^2)^{\nu/2}}\right\}\, dt\nonumber\\
&=\frac{w^{1+s-\nu}\rho^{s}}{y^{\nu}}\left\{\overline{\epsilon}^{\left(\nu+s+1\right)}\frac{K_{1+s-\nu}(w\overline{\epsilon} \sqrt{\rho^2+iy^2})}{(\rho^2+iy^2)^{\frac{1+s-\nu}{2}}}+\epsilon^{\nu+s+1}\frac{K_{1+s-\nu}(w\epsilon \sqrt{\rho^2-iy^2})}{(\rho^2-iy^2)^{\frac{1+s-\nu}{2}}}\right\}.
\end{align}
From \eqref{kk}, \eqref{grn69312}, and \eqref{koshcertain} with $\xi$ and $b$ replaced by $w$ and $y$ respectively, we see that for $y>0$, $\rho>0$ and Re$(s)>-1$,
\begin{align}\label{grn69312f}
&\int_{0}^{\infty}t^{s+1}F_{s}(\rho t)\left\{\frac{K_{\nu}\left(y\sqrt{w^2+it^2}\right)}{(w^2+it^2)^{\nu/2}}+\frac{K_{\nu}\left(y\sqrt{w^2-it^2}\right)}{(w^2-it^2)^{\nu/2}}\right\}\, dt\nonumber\\
&=\frac{w^{1+s-\nu}\rho^{s}}{y^{\nu}}\left\{\frac{K_{1+s-\nu}(w\sqrt{y^2-i\rho^2})}{(y^2-i\rho^2)^{\frac{1+s-\nu}{2}}}+\frac{K_{1+s-\nu}(w\sqrt{y^2+i\rho^2})}{(y^2+i\rho^2)^{\frac{1+s-\nu}{2}}}\right\}.
\end{align}
Now let $\nu=s+1/2$ in the above equation. Then, along with the fact \cite[p.~925, Formula \textbf{8.469.3}]{grn} that 
\begin{equation}\label{khalf}
K_{\pm\frac{1}{2}}(x)=\sqrt{\frac{\pi}{2x}}e^{-x},
\end{equation}
we deduce that for $y>0$, $\rho>0$ and Re$(s)>-1$,
\begin{align*}
&y^{s+\frac{1}{2}}\int_{0}^{\infty}t^{s+1}F_{s}(\rho t)\left\{\frac{K_{s+\frac{1}{2}}\left(y\sqrt{w^2+it^2}\right)}{(w^2+it^2)^{\frac{s}{2}+\frac{1}{4}}}+\frac{K_{s+\frac{1}{2}}\left(y\sqrt{w^2-it^2}\right)}{(w^2-it^2)^{\frac{s}{2}+\frac{1}{4}}}\right\}\, dt\nonumber\\
&=\sqrt{\frac{\pi}{2}}\rho^{s}\left\{\frac{\textup{exp}\left(-w\sqrt{y^2+(\rho\epsilon)^2}\right)}{\sqrt{y^2+(\rho\epsilon)^2}}+\frac{\textup{exp}\left(-w\sqrt{y^2+(\rho\overline{\epsilon})^2}\right)}{\sqrt{y^2+(\rho\overline{\epsilon})^2}}\right\}.
\end{align*}
Now multiply both sides of the above equation by $J_{2\l}\left(y\sqrt{z^2-w^2}\right)$, where Re$(\l)>-1/2$, integrate both sides of the resulting equation with respect to $y$ from $0$ to $\infty$. The double integral on the left is absolutely convergent as can be seen from \eqref{asymbess}, \eqref{asymbess2}, \eqref{fasy}, \eqref{k0} and \eqref{j0}. Hence interchanging the order of integration we see that for $\rho>0$, Re$(s)>-1$ and Re$(\l)>-1/2$,
\begin{align}\label{tfocka}
&\int_{0}^{\infty}t^{s+1}F_{s}(\rho t)\int_{0}^{\infty}y^{s+\frac{1}{2}}J_{2\l}\left(y\sqrt{z^2-w^2}\right)\left\{\frac{K_{s+\frac{1}{2}}\left(y\sqrt{w^2+it^2}\right)}{(w^2+it^2)^{\frac{s}{2}+\frac{1}{4}}}+\frac{K_{s+\frac{1}{2}}\left(y\sqrt{w^2-it^2}\right)}{(w^2-it^2)^{\frac{s}{2}+\frac{1}{4}}}\right\}\, dy\, dt\nonumber\\
&=\sqrt{\frac{\pi}{2}}\rho^{s}\int_{0}^{\infty}J_{2\l}\left(y\sqrt{z^2-w^2}\right)\left\{\frac{\textup{exp}\left(-w\sqrt{y^2+(\rho\epsilon)^2}\right)}{\sqrt{y^2+(\rho\epsilon)^2}}+\frac{\textup{exp}\left(-w\sqrt{y^2+(\rho\overline{\epsilon})^2}\right)}{\sqrt{y^2+(\rho\overline{\epsilon})^2}}\right\}\, dy.
\end{align}
Our next task is to evaluate the integral on the right-hand side of the above equation. To that end, note that from \cite[p.~708, formula (6.637.1)]{grn}, we have for Re $\a>0$, Re $\b>0, \g>0$, and Re $\nu>-1$,
\begin{equation}\label{grn708-1}
\int_{0}^{\infty}J_{\nu}(\g x)\frac{\textup{exp}\left(-\a\sqrt{x^2+\b^2}\right)}{\sqrt{x^2+\b^2}}\, dx=I_{\frac{\nu}{2}}\left(\frac{\b}{2}\left(\sqrt{\a^2+\g^2}-\a\right)\right)K_{\frac{\nu}{2}}\left(\frac{\b}{2}\left(\sqrt{\a^2+\g^2}+\a\right)\right).
\end{equation}
Now let $\a=w, \g=\sqrt{z^2-w^2}$ and $\nu=2\l$ in \eqref{grn708-1}. Then replace $\b$ first by $\rho\epsilon$ and then by $\rho\overline{\epsilon}$ and add the resulting equations to see that for $\rho>0$ and Re$(\l)>-1/2$,
\begin{align}\label{p11}
&\int_{0}^{\infty}J_{2\l}\left(y\sqrt{z^2-w^2}\right)\left(\frac{\textup{exp}\left(-w\sqrt{(\rho\overline{\epsilon})^2+y^2}\right)}{\sqrt{(\rho\overline{\epsilon})^2+y^2}}+\frac{\textup{exp}\left(-w\sqrt{(\rho\epsilon)^2+y^2}\right)}{\sqrt{(\rho\epsilon)^2+y^2}}\right)\, dy\nonumber\\
&=I_{\l}\left(\frac{\rho\overline{\epsilon}(z-w)}{2}\right)K_{\l}\left(\frac{\rho\overline{\epsilon}(z+w)}{2}\right)+I_{\l}\left(\frac{\rho\epsilon(z-w)}{2}\right)K_{\l}\left(\frac{\rho\epsilon(z+w)}{2}\right).
\end{align}
From \eqref{tfocka} and \eqref{p11}, we have for $\rho>0$, Re$(s)>-1$ and Re$(\l)>-1/2$,
\begin{align}\label{tfocka1}
&\int_{0}^{\infty}t^{s+1}F_{s}(\rho t)\int_{0}^{\infty}y^{s+\frac{1}{2}}J_{2\l}\left(y\sqrt{z^2-w^2}\right)\left\{\frac{K_{s+\frac{1}{2}}\left(y\sqrt{w^2+it^2}\right)}{(w^2+it^2)^{\frac{s}{2}+\frac{1}{4}}}+\frac{K_{s+\frac{1}{2}}\left(y\sqrt{w^2-it^2}\right)}{(w^2-it^2)^{\frac{s}{2}+\frac{1}{4}}}\right\}\, dy\, dt\nonumber\\
&=\sqrt{\frac{\pi}{2}}\rho^{s}\left\{I_{\l}\left(\frac{\rho\overline{\epsilon}(z-w)}{2}\right)K_{\l}\left(\frac{\rho\overline{\epsilon}(z+w)}{2}\right)+I_{\l}\left(\frac{\rho\epsilon(z-w)}{2}\right)K_{\l}\left(\frac{\rho\epsilon(z+w)}{2}\right)\right\}.
\end{align}
To evaluate the inner integral on the left side of \eqref{tfocka1}, we apply the following formula from \cite[p.~365, formula \textbf{2.16.21.1}]{pbm}, valid for Re$(c)>|$Im$(b)|$ and Re$(\a+u)>|$Re$(v)|$:
\begin{align*}
\int_{0}^{\infty}x^{\a-1}J_{u}(bx)K_{v}(cx)\, dx=2^{\a-2}b^{u}c^{-\a-u}\frac{\G\left(\frac{\a+u+v}{2}\right)\G\left(\frac{\a+u-v}{2}\right)}{\G(u+1)}\pFq21{\frac{\a+u+v}{2}, \frac{\a+u-v}{2}}{u+1}{-\frac{b^2}{c^2}}.
\end{align*}
Now replace $\a$ by $s+3/2$, $u$ by $2\l$, $v$ by $s+1/2$, $b$ by $\sqrt{z^2-w^2}$ and $c$ by $\sqrt{w^2+it^2}$ in the above equation so that for Re$(\l)>-1/2$ and Re$(s+\l)>-1$, we have
{\allowdisplaybreaks\begin{align}\label{prudev1}
&\int_{0}^{\infty}y^{s+\frac{1}{2}}J_{2\l}\left(y\sqrt{z^2-w^2}\right)\frac{K_{s+\frac{1}{2}}\left(y\sqrt{w^2+it^2}\right)}{(w^2+it^2)^{\frac{s}{2}+\frac{1}{4}}}\, dy\nonumber\\
&=\frac{1}{(w^2+it^2)^{\frac{s}{2}+\frac{1}{4}}}\bigg\{2^{s-\frac{1}{2}}(z^2-w^2)^{\l}(w^2+it^2)^{-\frac{1}{2}\left(s+\frac{3}{2}+2\l\right)}\frac{\G(s+\l+1)\G\left(\l+\frac{1}{2}\right)}{\G(2\l+1)}\nonumber\\
&\quad\quad\quad\quad\quad\quad\quad\quad\times\pFq21{s+\l+1, \l+\frac{1}{2}}{2\l+1}{\frac{w^2-z^2}{w^2+it^2}}\bigg\}.
\end{align}}
We first appropriately transform the ${}_2F_{1}$ in the above equation. By Pfaff's transformation \cite[p.~110, Equation (5.5)]{temme}, for $|\arg(1-\xi)|<\pi$, we have
\begin{equation*}
_2F_1\left(\begin{matrix} a,b\\c\end{matrix}\,\Bigr|\, \xi\right)=(1-\xi)^{-b}{}_2F_1\left(\begin{matrix} c-a,b\\c\end{matrix}\,\Bigr|\, \frac{\xi}{\xi-1}\right).
\end{equation*}
Hence
\begin{equation}\label{2f1pfaff}
\pFq21{s+\l+1, \l+\frac{1}{2}}{2\l+1}{\frac{w^2-z^2}{w^2+it^2}}=\left(\frac{z^2+it^2}{w^2+it^2}\right)^{-\l-\frac{1}{2}}\pFq21{\l-s, \l+\frac{1}{2}}{2\l+1}{\frac{z^2-w^2}{z^2+it^2}}.
\end{equation}
Now for $|\xi|<1$, we have \cite[p.~130, Problem 5.7, Equation (3)]{temme},
\begin{equation}\label{temmehyp}
{}_2F_{1}\left(\begin{matrix} a,b\\2b\end{matrix}\,\Bigr|\, \xi\right)=\left(\frac{1+\sqrt{1-\xi}}{2}\right)^{-2a}{}_2F_{1}\left(\begin{matrix} a,a-b+\frac{1}{2}\\b+\frac{1}{2}\end{matrix}\,\Bigr|\, \left(\frac{1-\sqrt{1-\xi}}{1+\sqrt{1-\xi}}\right)^2\right).
\end{equation}
Since $z>w>0$, we have $|z^2-w^2|<|z^2+it^2|$ for all $t>0$. Hence from \eqref{temmehyp},
\begin{align}\label{2f1qt}
\pFq21{\l-s, \l+\frac{1}{2}}{2\l+1}{\frac{z^2-w^2}{z^2+it^2}}&=\frac{2^{2\l-2s}(z^2+it^2)^{\l-s}}{\left(\sqrt{z^2+it^2}+\sqrt{w^2+it^2}\right)^{2\l-2s}}\nonumber\\
&\quad\times\pFq21{\l-s, -s}{\l+1}{\left(\frac{\sqrt{z^2+it^2}-\sqrt{w^2+it^2}}{\sqrt{z^2+it^2}+\sqrt{w^2+it^2}}\right)^2}.
\end{align}
Substitute \eqref{2f1qt} in \eqref{2f1pfaff} to get
\begin{align}\label{2f1pfaffqt}
\pFq21{s+\l+1, \l+\frac{1}{2}}{2\l+1}{\frac{w^2-z^2}{w^2+it^2}}&=\frac{2^{2\l-2s}}{\left(\sqrt{z^2+it^2}+\sqrt{w^2+it^2}\right)^{2\l-2s}}\frac{(z^2+it^2)^{-s-\frac{1}{2}}}{(w^2+it^2)^{-\l-\frac{1}{2}}}\nonumber\\
&\quad\times\pFq21{\l-s, -s}{\l+1}{\left(\frac{\sqrt{z^2+it^2}-\sqrt{w^2+it^2}}{\sqrt{z^2+it^2}+\sqrt{w^2+it^2}}\right)^2}.
\end{align}
Next, substituting \eqref{2f1pfaffqt} in \eqref{prudev1} gives for Re$(\l)>-1/2$ and Re$(s+\l)>-1$,
\begin{align}\label{prudev1s}
\int_{0}^{\infty}y^{s+\frac{1}{2}}J_{2\l}\left(y\sqrt{z^2-w^2}\right)\frac{K_{s+\frac{1}{2}}\left(y\sqrt{w^2+it^2}\right)}{(w^2+it^2)^{\frac{s}{2}+\frac{1}{4}}}\, dy
=\frac{2^{-s-\frac{1}{2}}\sqrt{\pi}\G(s+\l+1)}{\G(\l+1)}\mathcal{A}(t^2),
\end{align}
upon simplification, where $\mathcal{A}(t)$ is defined in \eqref{funa}.

Similarly for Re$(\l)>-1/2$ and Re$(s+\l)>-1$, we have
\begin{align}\label{prudev2s}
\int_{0}^{\infty}y^{s+\frac{1}{2}}J_{2\l}\left(y\sqrt{z^2-w^2}\right)\frac{K_{s+\frac{1}{2}}\left(y\sqrt{w^2-it^2}\right)}{(w^2-it^2)^{\frac{s}{2}+\frac{1}{4}}}\, dy=\frac{2^{-s-\frac{1}{2}}\sqrt{\pi}\G(s+\l+1)}{\G(\l+1)}\mathcal{A}(-t^2).
\end{align}
Add the corresponding sides of \eqref{prudev1s} and \eqref{prudev2s} and substitute the right-hand side of the resulting equation for the inner integral in \eqref{tfocka1} so as to deduce \eqref{tfockaeqn} for $z>w>0$ after simplification.

Now consider $z$ and $w$ to be complex numbers. The integrand of the left-hand side of \eqref{tfockaeqn} is well-defined except for the singularities on the lines $\arg z=\pm\frac{\pi}{4}$ and $\arg w=\pm\frac{\pi}{4}$. The right-hand side is well-defined for all complex $z, w$ such that $z\neq\pm w$. Therefore, \eqref{tfockaeqn} can be analytically continued in the region $|\arg z|<\pi/4, |\arg w|<\pi/4$ and \textup{Re}$(z)>$ \textup{Re}$(w)>0$.

\section{Proof of Theorem \ref{anathmbdkz}}\label{proof2}
We first prove the result for $\a>\b>0$ and later extend it by analytic continuation to the region \textup{Re}$(\sqrt{\a})\geq$ \textup{Re}$(\sqrt{\b})>0$. Note that $s, \l, \a$ and $\b$ are fixed throughout the proof.
In Theorem \ref{tfockathm}, let $\rho=4\pi\sqrt{x}$, $x>0$, replace $z, w$ and $t$ by $\sqrt{\a}, \sqrt{\b}$ and $\sqrt{t}$ respectively. Define 
\begin{equation}\label{f}
f(t):=\frac{1}{2\pi}t^{\frac{s}{2}}\left(\mathcal{A}(t)+\mathcal{A}(-t))\right),
\end{equation}
where $\mathcal{A}(t)$ is the same as defined in \eqref{funa} except that we replace  $z$ and $w$ by $\sqrt{\a}$ and $\sqrt{\b}$ respectively.

Then from Theorem \ref{tfockathm}, for $x>0$, Re$(s)>-1$, Re$(\l)>-1/2$ and Re$(s+\l)>-1$,
\begin{align}\label{g0}
2\pi \int_{0}^{\infty} f(t) F_s(4\pi\sqrt{xt}) \,dt&=\frac{2\G(\l+1)(8\pi\sqrt{x})^{s}}{\G(s+\l+1)}\bigg(I_{\l}\left(2\pi\overline{\epsilon}\sqrt{x}(\sqrt{\a}-\sqrt{\b})\right)K_{\l}\left(2\pi\overline{\epsilon}\sqrt{x}(\sqrt{\a}+\sqrt{\b})\right)\nonumber\\
&\quad+I_{\l}\left(2\pi\epsilon\sqrt{x}(\sqrt{\a}-\sqrt{\b})\right)K_{\l}\left(2\pi\epsilon\sqrt{x}(\sqrt{\a}+\sqrt{\b})\right)\bigg).
\end{align}
The function $f$ defined in \eqref{f} satisfies the conditions given before \eqref{recpairgui} so that Guinand's result \eqref{guitra} could be applied. To see this, first note $\a>\b>0$ and Re$(\l)>0$. Since $f$ is infinitely differentiable, $f$ and $f'$ are integrals. Also, as $|\xi|\to \infty$, one has \cite[Equation (4.4)]{bdkz}
\begin{align}\label{asssy}
\mathcal{A}(-i\xi)\sim \frac{4^{s-\l}(\alpha-\beta)^{\l}}{\xi^{\l+s+1}},
\end{align}
since the hypergeometric function tends to $1$. By the hypotheses of Theorem \ref{anathmbdkz}, we have Re$(\l)>0$, Re$(s+\l)>0$ or $s=-\l$. Hence we conclude that $f$ tends to zero as $t\to\infty$. Further, doing the analysis similar to that done in \cite[p.~322]{bdkz} for $r_k(n)$, one can prove that $f(t)$, $tf'(t)$ and $t^2f''(t)$ belong to $L^{2}(0,\infty)$. Thus the hypotheses for applying Guinand's result \eqref{guitra} are satisfied.

Note that \eqref{recpairgui} and \eqref{g0} imply
\begin{align}\label{g}
g(x)&=\frac{2\G(\l+1)(8\pi\sqrt{x})^{s}}{\G(s+\l+1)}\bigg\{I_{\l}\left(2\pi\overline{\epsilon}\sqrt{x}(\sqrt{\a}-\sqrt{\b})\right)K_{\l}\left(2\pi\overline{\epsilon}\sqrt{x}(\sqrt{\a}+\sqrt{\b})\right)\nonumber\\
&\quad+I_{\l}\left(2\pi\epsilon\sqrt{x}(\sqrt{\a}-\sqrt{\b})\right)K_{\l}\left(2\pi\epsilon\sqrt{x}(\sqrt{\a}+\sqrt{\b})\right)\bigg\}.
\end{align}
Next, we need to evaluate the integrals
\begin{equation*}
\int_{0}^{\infty} y^{\pm\frac{s}{2}} f(y) \,dy\hspace{5mm}\text{and}\hspace{5mm}\int_{0}^{\infty} y^{\pm\frac{s}{2}} g(y) \,dy.
\end{equation*}
To that end, let $y\geq 0$. 
Then for Re$(\mu+\l+1)>0$, we have \cite[Equation (4.13)]{bdkz},
 \begin{align}\label{itransform}
\int_{0}^{\infty}t^{\mu}I_{\lambda}(t)e^{-\frac{(2y+\alpha+\beta)t}{\alpha-\beta}}\,dt=&\frac{2^{-3\mu-1}}{\sqrt{y+\alpha}\sqrt{y+\beta}}\frac{(\alpha-\beta)^{\lambda+\mu+1}\Gamma(\lambda+\mu+1)}{(\sqrt{y+\alpha}+\sqrt{y+\beta})^{2\lambda}\Gamma(\lambda+1)}\left(\frac{1}{\sqrt{y+\alpha}}+\frac{1}{\sqrt{y+\beta}}\right)^{2\mu}\nonumber\\
                               &  \times    \pFq21{\l-\mu, -\mu}{\l+1}{\left(\frac{\sqrt{y+\alpha}-\sqrt{y+\beta}}{\sqrt{y+\alpha}+\sqrt{y+\beta}}\right)^2}.
\end{align}
Let $\nu$ be a complex number such that Re$(\nu)>-1$. Then from \eqref{itransform}, we see that for Re$(s+\l+1)>0$,
{\allowdisplaybreaks\begin{align}\label{doublei}
\int_{0}^{\infty}y^{\nu}\mathcal{A}(-iy)\, dy&=\frac{2^{3s+1}(\alpha-\beta)^{-s-1}\Gamma(\lambda+1)}{\Gamma(\l+s+1)}\int_0^{\infty}\int_0^{\infty}y^{\nu}t^se^{-\frac{(2y+\alpha+\beta)t}{\alpha-\beta}}I_{\l}(t)\,dt\,dy\nonumber\\
&=\frac{2^{3s+1}(\alpha-\beta)^{-s-1}\Gamma(\lambda+1)}{\Gamma(\l+s+1)}\int_0^{\infty}t^sI_{\l}(t)e^{-\frac{(\alpha+\beta)t}{\alpha-\beta}}\left(\int_0^{\infty}y^{\nu}e^{-\frac{2yt}{\alpha-\beta}}\,dy\right)\,dt,
\end{align}}%
where the interchange of the order of integration is justified since the double integral in the first line is absolutely convergent, for, as $t\to\infty$, we have \cite[p.~240, Equation (9.54)]{temme}
\begin{align}\label{iasy}
I_{\l}(t)\sim \frac{e^t}{\sqrt{2\pi t}},
\end{align}
where Re$(\l)>0$ and $|\l|\to\infty$. Since Re$(\nu)>-1$, expressing the inner integral on the extreme right of \eqref{doublei} in the form of a gamma function, we have for Re$(\l+s-\nu)>0$,
\begin{align}\label{doubleis}
\int_{0}^{\infty}y^{\nu}\mathcal{A}(-iy)\, dy&=\frac{2^{3s-\nu}(\alpha-\beta)^{\nu-s}\Gamma(\lambda+1)\Gamma(\nu+1)}{\Gamma(\l+s+1)}\int_0^{\infty}t^{s-\nu-1}I_{\l}(t)e^{-\frac{(\alpha+\beta)t}{\alpha-\beta}}\,dt\nonumber\\
&=\frac{4^{\nu+1}\Gamma(\lambda+s-\nu)\Gamma(\nu+1)}{\Gamma(\l+s+1)}\left(\frac{\sqrt{\alpha}-\sqrt{\beta}}{\sqrt{\alpha}+\sqrt{\beta}}\right)^{\l}\frac{1}{\sqrt{\alpha\beta}}\left(\frac{1}{\sqrt{\alpha}}+\frac{1}{\sqrt{\beta}}\right)^{2(s-\nu-1)}\\\nonumber
	&\quad\times\pFq21{\l-s+\nu+1, -s+\nu+1}{\l+1}{\left(\frac{\sqrt{\alpha}-\sqrt{\beta}}{\sqrt{\alpha}+\sqrt{\beta}}\right)^2},
\end{align}
where in the last step, we again employed \eqref{itransform}, this time with $y=0$ and $\mu=s-\nu-1$. 

Let $C(\theta):=C(R, \theta):=\big\{\xi=Re^{i\theta}: 0\leq \theta\leq \frac{\pi}{2}, R>0\big\}$ be a circular arc in the first quadrant. Then $C(-\theta)$ is a circular arc in the fourth quadrant. From \eqref{asssy}, clearly, 
\begin{align*}
\int_{C(\theta)}\xi^{\nu}\mathcal{A}(-i\xi)\, d\xi,\quad \int_{C(-\theta)}\xi^{\nu}\mathcal{A}(-i\xi) d\xi\ll R^{-\text{Re }(s+\l-\nu+1)},
\end{align*} 
and the latter tends to zero as $R\to \infty$ since Re$(\l+s-\nu)>0$. Note that $\xi^{\nu}\mathcal{A}(\xi)$ is an analytic function for $\text{Re}(\xi)\geq 0$. Thus invoking Cauchy's residue theorem twice, once along the closed contour consisting of $[0, R]$, $C(\theta)$ and $[iR, 0]$, and then, along its reflection along the real axis, we find that
\begin{align*}
\int_0^{i\infty}\xi^{\nu}\mathcal{A}(-i\xi)\, d\xi=\int_0^{\infty}\xi^{\nu}\mathcal{A}(-i\xi)\, d\xi=-\int_{-i\infty}^0\xi^{\nu}\mathcal{A}(-i\xi)\, d\xi.
\end{align*}
Therefore, 
\begin{align}\label{hintfirst}
\int_0^{\infty}y^{\nu}\mathcal{A}(y)\, dy=i^{-\nu-1}\int_0^{\infty}y^{\nu}\mathcal{A}(-iy)\, dy
\end{align}
and
\begin{align}\label{hintfourth}
\int_0^{\infty}y^{\nu}\mathcal{A}(-y)\, dy=i^{\nu+1}\int_0^{\infty}y^{\nu}\mathcal{A}(-iy)\, dy,
\end{align}
so that from \eqref{doubleis}, \eqref{hintfirst}, and \eqref{hintfourth}, we have for $\text{Re}(\nu)>-1$ and $\text{Re}(s+\l-\nu)>0$, 
\begin{align}\label{nuinte}
\int_{0}^{\infty}y^{\nu}(\mathcal{A}(y)+\mathcal{A}(-y))\, dy&=-\frac{2^{2\nu+3}\sin\left(\frac{\pi\nu}{2}\right)\Gamma(\lambda+s-\nu)\Gamma(\nu+1)}{\sqrt{\alpha\beta}\Gamma(\l+s+1)}\left(\frac{1}{\sqrt{\alpha}}+\frac{1}{\sqrt{\beta}}\right)^{2(s-\nu-1)}\nonumber\\
&\quad\times\left(\frac{\sqrt{\alpha}-\sqrt{\beta}}{\sqrt{\alpha}+\sqrt{\beta}}\right)^{\l}\pFq21{\l-s+\nu+1, -s+\nu+1}{\l+1}{\left(\frac{\sqrt{\alpha}-\sqrt{\beta}}{\sqrt{\alpha}+\sqrt{\beta}}\right)^2}.
\end{align}
We are now ready to evaluate $\int_{0}^{\infty} y^{\pm\frac{s}{2}} f(y) \,dy$. To that end, let $\nu=s$ in \eqref{nuinte} so that for Re$(s)>-1$ and Re$(\l)>0$, we have using \eqref{f},
\begin{align}\label{f1}
\int_{0}^{\infty} y^{\frac{s}{2}} f(y) \,dy=-\frac{2^{2s}\Gamma(\l)\Gamma(s+1)\sin\left(\frac{\pi s}{2}\right)}{\pi\Gamma(\l+s+1)}\left(\frac{\sqrt{\alpha}-\sqrt{\beta}}{\sqrt{\alpha}+\sqrt{\beta}}\right)^{\l}.
\end{align}
Now first let $\nu=0$ in \eqref{nuinte} so that for Re$(s+\l)>0$,
\begin{align}\label{f2}
\int_{0}^{\infty} y^{-\frac{s}{2}} f(y) \,dy=0.
\end{align}
We now evaluate the integral in \eqref{f2} when $s=-\l$. Note that in this case, \eqref{nuinte} implies that for $-1<$ Re$(\nu)<0$,
\begin{align}\label{nuinte1}
\int_{0}^{\infty}y^{\nu}(\mathcal{A}(y)+\mathcal{A}(-y))\, dy&=\frac{\pi 2^{2\nu+2}(\a\b)^{\l+\nu+1/2}(\sqrt{\a}-\sqrt{\b})^{\l}}{\cos\left(\frac{\pi\nu}{2}\right)(\sqrt{\a}+\sqrt{\b})^{3\l+2\nu+2}}\nonumber\\
&\quad\times\pFq21{2\l+\nu+1, \l+\nu+1}{\l+1}{\left(\frac{\sqrt{\alpha}-\sqrt{\beta}}{\sqrt{\alpha}+\sqrt{\beta}}\right)^2},
\end{align}
where we used a variant of the reflection formula for the gamma function \cite[p.~74]{temme}, namely,
\begin{equation*}
\G\left(\frac{1}{2}+\theta\right)\G\left(\frac{1}{2}-\theta\right)=\frac{\pi}{\cos(\pi\theta)},\hspace{1.5mm}\text{where}\hspace{1.5mm}\theta-\frac{1}{2}\notin\mathbb{Z}.
\end{equation*}
Note that both sides of \eqref{nuinte1} are analytic at $\nu=0$, hence letting $\nu=0$ in \eqref{nuinte1} gives
\begin{align}\label{f21}
\int_{0}^{\infty} y^{-\frac{s}{2}} f(y) \,dy&=\frac{2(\a\b)^{\l+\frac{1}{2}}(\sqrt{\a}-\sqrt{\b})^{\l}}{(\sqrt{\a}+\sqrt{\b})^{3\l+2}}\pFq10{2\l+1}{-}{\left(\frac{\sqrt{\alpha}-\sqrt{\beta}}{\sqrt{\alpha}+\sqrt{\beta}}\right)^2}\nonumber\\
&=\frac{2(\a\b)^{\l+\frac{1}{2}}(\sqrt{\a}-\sqrt{\b})^{\l}}{(\sqrt{\a}+\sqrt{\b})^{3\l+2}}\sum_{n=0}^{\infty}\frac{(2\l+1)_n}{n!}\left(\frac{\sqrt{\alpha}-\sqrt{\beta}}{\sqrt{\alpha}+\sqrt{\beta}}\right)^{2n}\nonumber\\
&=\frac{2(\a\b)^{\l+\frac{1}{2}}(\sqrt{\a}-\sqrt{\b})^{\l}}{(\sqrt{\a}+\sqrt{\b})^{3\l+2}}\frac{(4\sqrt{\a\b})^{-2\l-1}}{(\sqrt{\a}+\sqrt{\b})^{-4\l-2}}\nonumber\\
&=\frac{(\a-\b)^{\l}}{2^{4\l+1}},
\end{align}%
where in the third step, we employed the generalized binomial theorem \cite[p.~108, Equation (5.2)]{temme}, namely, for $|\xi|<1$,
\begin{equation*}
(1-\xi)^{-a}=\sum_{n=0}^{\infty}\frac{(a)_n}{n!}\xi^{n}.
\end{equation*}
Combining \eqref{f2} and \eqref{f21} and using the definition of $h(s, \l)$ in \eqref{hsl}, we can write
\begin{equation}\label{fboth}
\int_{0}^{\infty} y^{-\frac{s}{2}} f(y) \,dy=h(s,\l).
\end{equation}
To evaluate the remaining two integrals, namely, $\int_{0}^{\infty} y^{\pm\frac{s}{2}} g(y) \,dy$, we first note from \cite[Equation (24)]{dixfer4}\footnote{As mentioned in \cite{dixfer4}, the first equality in \eqref{xik} is an obvious modification of \cite[p.~410, Equation (13.45)]{watson-1966a}.} (see also \cite[p.~380-381, Equation \textbf{2.16.28.1}]{pbm}) that for $|\textup{Re}(b)|<$Re$(c)$ and $|\textup{Re}(\nu)|<$ Re$(a+\nu)$,
\begin{align}\label{xik}
\int_0^{\infty}y^{a-1}I_{\nu}(by)K_{\nu}(cy)\, dy
&=2^{a-2}c^{-a-\nu}b^{\nu}\frac{\G\left(a/2\right)\G\left(\nu+a/2\right)}{\G(\nu+1)}{}_2F_{1}\left(\begin{matrix} \nu+\frac{a}{2},\frac{a}{2}\\\nu+1\end{matrix}\,\Bigr|\, \frac{b^2}{c^2}\right)\nonumber\\
&=\frac{(2c)^{a-2}(b/c)^{\nu}\Gamma(a/2)\Gamma(\nu+a/2)}{(c^2-b^2)^{a-1}\Gamma(\nu+1)}\pFq21{1+\nu-a/2, 1-a/2}{1+\nu}{\frac{b^2}{c^2}},
\end{align}
where the second equality follows from Euler's transformation \cite[p.~110, Equation (5.5)]{temme}
\begin{equation*}
_2F_1\left(\begin{matrix} a,b\\c\end{matrix}\,\Bigr|\, \xi\right)=(1-\xi)^{c-a-b}{}_2F_1\left(\begin{matrix} c-a,c-b\\c\end{matrix}\,\Bigr|\, \xi\right).
\end{equation*}
In \eqref{xik}, replace $y$ by $\sqrt{y}$ and let $a=2(s+1)$, $b=2\pi\bar{\epsilon}(\sqrt{\alpha}-\sqrt{\beta})$, $c=2\pi\bar{\epsilon}(\sqrt{\alpha}+\sqrt{\beta})$ and $\nu=\lambda$ so that for $\text{Re}(s)>-1$ and $\text{Re}(s+\l)>-1$,
\begin{align}\label{xik1}
&\int_0^{\infty}y^{s}I_{\l}(2\pi\bar{\epsilon}\sqrt{y(\alpha-\beta)})K_{\l}(2\pi\bar{\epsilon}\sqrt{y(\alpha+\beta)})\, dy\\\nonumber
&=\frac{i^{s+1}\Gamma(\lambda+s+1)\Gamma(s+1)}{\sqrt{\alpha\beta}(2\pi)^{2s+2}2^{2s+1}\Gamma(\l+1)}\left(\frac{\sqrt{\alpha}-\sqrt{\beta}}{\sqrt{\alpha}+\sqrt{\beta}}\right)^{\l}\left(\frac{1}{\sqrt{\alpha}}+\frac{1}{\sqrt{\beta}}\right)^{2s}\pFq21{\l-s, -s}{\l+1}{\left(\frac{\sqrt{\alpha}-\sqrt{\beta}}{\sqrt{\alpha}+\sqrt{\beta}}\right)^2}.
\end{align}
Similarly for $\text{Re}(s)>-1$ and $\text{Re}(s+\l)>-1$,
\begin{align}\label{xik2}
&\int_0^{\infty}y^{s}I_{\l}(2\pi{\epsilon}\sqrt{y(\alpha-\beta)})K_{\l}(2\pi{\epsilon}\sqrt{y(\alpha+\beta)})\, dy\\\nonumber
&=\frac{(-i)^{s+1}\Gamma(\lambda+s+1)\Gamma(s+1)}{\sqrt{\alpha\beta}(2\pi)^{2s+2}2^{2s+1}\Gamma(\l+1)}\left(\frac{\sqrt{\alpha}-\sqrt{\beta}}{\sqrt{\alpha}+\sqrt{\beta}}\right)^{\l}\left(\frac{1}{\sqrt{\alpha}}+\frac{1}{\sqrt{\beta}}\right)^{2s}\pFq21{\l-s, -s}{\l+1}{\left(\frac{\sqrt{\alpha}-\sqrt{\beta}}{\sqrt{\alpha}+\sqrt{\beta}}\right)^2}.
\end{align}
From \eqref{g}, \eqref{xik1} and \eqref{xik2}, we deduce that for $\text{Re}(s)>-1$ and $\text{Re}(s+\l)>-1$,
\begin{align}\label{xik3}
\int_0^{\infty}y^{\frac{s}{2}}g(y)\, dy&=-\frac{(2\pi)^{-s-1}}{\pi\sqrt{\a\b}}\G(s+1)\sin\left(\frac{\pi s}{2}\right)\left(\frac{\sqrt{\alpha}-\sqrt{\beta}}{\sqrt{\alpha}+\sqrt{\beta}}\right)^{\l}\left(\frac{1}{\sqrt{\alpha}}+\frac{1}{\sqrt{\beta}}\right)^{2s}\nonumber\\
&\quad\times\pFq21{\l-s, -s}{\l+1}{\left(\frac{\sqrt{\alpha}-\sqrt{\beta}}{\sqrt{\alpha}+\sqrt{\beta}}\right)^2}.
\end{align}
Similarly from \eqref{xik1} and \eqref{xik2}, we find that for Re$(\l)>-1$,
\begin{align}\label{xik4}
&\int_0^{\infty}y^{-\frac{s}{2}}g(y)\, dy=0.
\end{align}
Finally, from \eqref{guitra}, \eqref{hsl}, \eqref{f}, \eqref{g}, \eqref{f1}, \eqref{fboth}, \eqref{xik3} and \eqref{xik4}, we deduce that for $-\frac{1}{2}<$ Re$(s)<\frac{1}{2}$, Re$(\l)>0$ and either Re$(s+\l)>0$ or $s=-\l$,
{\allowdisplaybreaks\begin{align*}
&\frac{1}{2\pi}\sum_{n=1}^{\infty}\sigma_s(n)(\mathcal{A}(n)+\mathcal{A}(-n))+\frac{2^{2s}\Gamma(\l)\Gamma(s+1)\sin\left(\frac{\pi s}{2}\right)}{\pi\Gamma(\l+s+1)}\zeta(s+1)\left(\frac{\sqrt{\alpha}-\sqrt{\beta}}{\sqrt{\alpha}+\sqrt{\beta}}\right)^{\l}-\zeta(1-s)h(s, \l)\nonumber\\
&=\frac{2\G(\l+1)(8\pi)^{s}}{\G(s+\l+1)}\sum_{n=1}^{\infty}\sigma_s(n)\bigg\{I_{\l}\left(2\pi\overline{\epsilon}\sqrt{n}(\sqrt{\a}-\sqrt{\b})\right)K_{\l}\left(2\pi\overline{\epsilon}\sqrt{n}(\sqrt{\a}+\sqrt{\b})\right)\nonumber\\
&\quad+I_{\l}\left(2\pi\epsilon\sqrt{n}(\sqrt{\a}-\sqrt{\b})\right)K_{\l}\left(2\pi\epsilon\sqrt{n}(\sqrt{\a}+\sqrt{\b})\right)\bigg\}\nonumber\\
&\quad+\frac{(2\pi)^{-s-1}}{\pi\sqrt{\a\b}}\G(s+1)\sin\left(\frac{\pi s}{2}\right)\zeta(s+1)\left(\frac{\sqrt{\alpha}-\sqrt{\beta}}{\sqrt{\alpha}+\sqrt{\beta}}\right)^{\l}\left(\frac{1}{\sqrt{\alpha}}+\frac{1}{\sqrt{\beta}}\right)^{2s}\nonumber\\
&\quad\times\pFq21{\l-s, -s}{\l+1}{\left(\frac{\sqrt{\alpha}-\sqrt{\beta}}{\sqrt{\alpha}+\sqrt{\beta}}\right)^2}.
\end{align*}}
Now multiply both sides of the above equation by $\frac{\G(s+\l+1)}{2\G(\l+1)(8\pi)^{s}}$, use the asymmetric form of the functional equation of $\zeta(-s)$ \cite[p.~25]{titch}, namely,
\begin{equation}\label{asyfe}
\zeta(-s)=-2^{-s}\pi^{-s-1}\G(1+s)\zeta(1+s)\sin\left(\frac{\pi s}{2}\right)
\end{equation}
to simplify the resulting equation and use \eqref{funa} to arrive at \eqref{anathmbdkzeqn} for $\a>\b>0$, $-\frac{1}{2}<$ Re$(s)<\frac{1}{2}$, Re$(\l)>0$ and either Re$(s+\l)>0$ or $s=-\l$.

Now it can be seen that both sides of \eqref{anathmbdkzeqn} are analytic as long as Re$(\sqrt{\a})>$Re$(\sqrt{\b})>0$, $|\arg\a|<\pi/2$, $|\arg\b|<\pi/2$, Re$(\l)>0$, and $s\in\mathbb{C}$ such that either Re$(s+\l)>0$ or $s=-\l$. To see why this is true even for $s=-1$, note that the power series expansions
\begin{align}\label{pp1}
-\frac{\zeta(-s)}{2\l}\left(\frac{\sqrt{\a}-\sqrt{\b}}{\sqrt{\a}+\sqrt{\b}}\right)^{\l}=\frac{1}{2\l(s+1)}\left(\frac{\sqrt{\a}-\sqrt{\b}}{\sqrt{\a}+\sqrt{\b}}\right)^{\l}-\frac{\g}{2\l}\left(\frac{\sqrt{\a}-\sqrt{\b}}{\sqrt{\a}+\sqrt{\b}}\right)^{\l}+O(|s+1|),
\end{align}
and
\begin{align}\label{pp2}
&\frac{\zeta(-s)}{2^{3s+2}\pi^{s+1}\sqrt{\a\b}}\frac{\G(\l+s+1)}{\G(\l+1)}\left(\frac{\sqrt{\a}-\sqrt{\b}}{\sqrt{\a}+\sqrt{\b}}\right)^{\l}\left(\frac{1}{\sqrt{\a}}+\frac{1}{\sqrt{\b}}\right)^{2s}\pFq21{\l-s, -s}{\l+1}{\left(\frac{\sqrt{\a}-\sqrt{\b}}{\sqrt{\a}+\sqrt{\b}}\right)^2}\nonumber\\
&=-\frac{1}{2\l(s+1)}\left(\frac{\sqrt{\a}-\sqrt{\b}}{\sqrt{\a}+\sqrt{\b}}\right)^{\l}+c_{\a, \b, \l}+O(|s+1|),
\end{align}
where $c_{\a, \b, \l}$ is some constant independent of $s$, clearly show that the principal parts of the left-hand sides of \eqref{pp1} and \eqref{pp2} completely cancel out. 

Hence by analytic continuation, \eqref{anathmbdkzeqn} is valid for Re$(\sqrt{\a})>$Re$(\sqrt{\b})>0$, $|\arg\a|<\pi/2$, $|\arg\b|<\pi/2$, Re$(\l)>0$, and for all complex $s$ such that either Re$(s+\l)>0$ or $s=-\l$.

\section{Proofs of Corollaries of Theorem \ref{anathmbdkz}}\label{cor1}
\begin{proof}[Corollary \textup{\ref{dixmolcor}}][]
First let Re$(\l)>0$. Let $s=-\l$ in Theorem \ref{anathmbdkz}. Then divide both sides by $(\sqrt{\a}-\sqrt{\b})^{\l}$ and let $\a\to\b$. This gives
\begin{align}\label{l0}
&\sum_{n=1}^{\infty}\sigma_{-\l}(n)\lim_{\a\to\b}\frac{1}{(\a-\b)
^{\l}}\bigg\{I_{\l}\left(2\pi\overline{\epsilon}(\sqrt{n\a}-\sqrt{n\b})\right)K_{\l}\left(2\pi\overline{\epsilon}(\sqrt{n\a}+\sqrt{n\b})\right)\nonumber\\
&\qquad\qquad\qquad\qquad\qquad\quad\quad+I_{\l}\left(2\pi\epsilon(\sqrt{n\a}-\sqrt{n\b})\right)K_{\l}\left(2\pi\epsilon(\sqrt{n\a}+\sqrt{n\b})\right)\bigg\}\nonumber\\
&=-\frac{\pi^{\l}\b^{\l/2}\zeta(\l+1)}{4\G(\l+1)}-\frac{\zeta(\l)}{\l 2^{\l+1}\b^{\l/2}}+\frac{\pi^{\l-1}\b^{-1+\l/2}\zeta(\l)}{4\G(\l+1)}\nonumber\\
&\quad+\frac{1}{2^{2-3\l}\pi^{1-\l}\G(\l+1)}\sum_{n=1}^{\infty}\sigma_{-\l}(n)\left\{\frac{2^{\l}\b^{\l/2}(\b+in)^{2\l-1}}{2^{4\l}(\b+in)^{2\l}}+\frac{2^{\l}\b^{\l/2}(\b-in)^{2\l-1}}{2^{4\l}(\b-in)^{2\l}}\right\}.
\end{align}
Let $L:=L(n, \l, \b)$ denote the limit in the above equation. We show that
\begin{equation}\label{l}
L=\frac{(\pi\sqrt{n})^{\l}}{\G(\l+1)}\left(e^{\frac{i\pi\l}{4}}K_{\l}\left(4\pi e^{\frac{i\pi}{4}}\sqrt{n\b}\right)+e^{-\frac{i\pi\l}{4}}K_{\l}\left(4\pi e^{-\frac{i\pi}{4}}\sqrt{n\b}\right)\right).
\end{equation}
To that end, note that Re$(\sqrt{\a})>$Re$(\sqrt{\b})$ implies $-\frac{\pi}{4}<\arg(2\pi\e\sqrt{n}(\sqrt{\a}-\sqrt{\b}))<\frac{3\pi}{4}$. First, consider  $-\frac{\pi}{4}<\arg(2\pi\e\sqrt{n}(\sqrt{\a}-\sqrt{\b}))\leq\frac{\pi}{2}$. Then using \eqref{besseli} and \eqref{sumbesselj}, we see that
\begin{align}\label{l1}
&\lim_{\a\to\b}\frac{I_{\l}\left(2\pi\epsilon(\sqrt{n\a}-\sqrt{n\b})\right)K_{\l}\left(2\pi\epsilon(\sqrt{n\a}+\sqrt{n\b})\right)}{(\sqrt{\a}-\sqrt{\b})
^{\l}}\nonumber\\
&=e^{\frac{-\pi i\l}{2}}K_{\l}(4\pi\e\sqrt{n\b})\lim_{\a\to\b}\frac{J_{\l}\left(2\pi e^{\frac{3\pi i}{4}}\sqrt{n}(\sqrt{\a}-\sqrt{\b})\right)}{(\sqrt{\a}-\sqrt{\b})^{\l}}\nonumber\\
&=e^{\frac{-\pi i\l}{2}}K_{\l}(4\pi\e\sqrt{n\b})\nonumber\\
&\quad\times\lim_{\a\to\b}\frac{1}{(\sqrt{\a}-\sqrt{\b})^{\l}}\left(\frac{e^{\frac{3\pi i\l}{4}}(\pi\sqrt{n})^{\l}(\a-\b)^{\l}}{\G(\l+1)}+\sum_{m=1}^{\infty}\frac{(-1)^m\left(\pi e^{\frac{3\pi i}{4}}(\sqrt{\a}-\sqrt{\b})\right)^{2m+\l}}{m!\G(m+1+\l)}\right)\nonumber\\
&=\frac{(\pi\e\sqrt{n})^{\l}}{\G(\l+1)}K_{\l}(4\pi\e\sqrt{n\b}).
\end{align}
Similarly, it can be shown that we get the same evaluation as above when $\frac{\pi}{2}<\arg(2\pi\e\sqrt{n}(\sqrt{\a}-\sqrt{\b}))<\frac{3\pi}{4}$. 
Also, one can show in the same way that
\begin{equation}\label{l2}
\lim_{\a\to\b}\frac{I_{\l}\left(2\pi\overline{\epsilon}(\sqrt{n\a}-\sqrt{n\b})\right)K_{\l}\left(2\pi\overline{\epsilon}(\sqrt{n\a}+\sqrt{n\b})\right)}{(\sqrt{\a}-\sqrt{\b})^{\l}}=\frac{(\pi\overline{\e}\sqrt{n})^{\l}}{\G(\l+1)}K_{\l}(4\pi\overline{\e}\sqrt{n\b}).
\end{equation}
Thus \eqref{l1} and \eqref{l2} establish \eqref{l}. Now substitute \eqref{l} in \eqref{l0}, multiply both sides of the resulting equation by $\frac{2\G(\l+1)}{\pi^{\l}}$ and simplify so as to obtain \eqref{dixmol} for Re$(\l)>0$. The result extends to Re$(\l)>-1$ by analytic continuation.
\end{proof}
\begin{proof}[Corollary \textup{\ref{dixmolcorgen}}][]
This proof begins by first obtaining a result complementary to \eqref{dixmol}. To be more specific, let $s\neq-\l$ and Re$(s+\l)>0$ in \eqref{anathmbdkzeqn}. Divide both sides of \eqref{anathmbdkzeqn} by by $(\sqrt{\a}-\sqrt{\b})^{\l}$, then let $\a\to\b$ and then simplify so as to deduce
\begin{align*}
&\sum_{n=1}^{\infty} \sigma_{s}(n) n^{\frac{\l}{2}} 
\left( e^{\frac{\pi i \l}{4}} K_{\l}( 4 \pi \e \sqrt{n\b} ) +
 e^{-\frac{\pi i \l}{4}} K_{\l}( 4 \pi \overline{\e} \sqrt{n\b} ) \right)\nonumber\\
&=-\frac{\zeta(-s)\G(\l)}{2^{\l+1}\pi^{\l}\b^{\l/2}}+\frac{\zeta(-s)\G(\l+s+1)}{2(2\pi)^{s+\l+1}\b^{s+\frac{\l}{2}+1}}\nonumber\\
&\quad+\frac{\b^{\frac{\l}{2}}\G(\l+s+1)}{2(2\pi)^{s+\l+1}}\sum_{n=1}^{\infty}\sigma_s(n)\left\{\frac{1}{(\b+in)^{s+\l+1}}+\frac{1}{(\b-in)^{s+\l+1}}\right\}.
\end{align*}
Now \eqref{dixmolcorgeneqn} follows from letting $\l=1/2$, using \eqref{khalf}, and simplifying.
\end{proof}
\begin{proof}[Corollary \textup{\ref{exotic}}][]
Let $s\to-1$ in Theorem \ref{anathmbdkz}. This gives for Re$(\l)>1$ or $\l=1$,
{\allowdisplaybreaks\begin{align}\label{exotics-1}
&\sum_{n=1}^{\infty}\sigma_{-1}(n)\bigg\{I_{\l}\left(2\pi\overline{\epsilon}(\sqrt{n\a}-\sqrt{n\b})\right)K_{\l}\left(2\pi\overline{\epsilon}(\sqrt{n\a}+\sqrt{n\b})\right)\nonumber\\
&\qquad\qquad+I_{\l}\left(2\pi\epsilon(\sqrt{n\a}-\sqrt{n\b})\right)K_{\l}\left(2\pi\epsilon(\sqrt{n\a}+\sqrt{n\b})\right)\bigg\}\nonumber\\
&=-\frac{2\pi^3}{3\l}h(-1, \l)+\left(\frac{\sqrt{\a}-\sqrt{\b}}{\sqrt{\a}+\sqrt{\b}}\right)^{\l}L_{1}(\a, \b, \l)\nonumber\\
&\quad+\frac{2}{\l}\sum_{n=1}^{\infty}\sigma_{-1}(n)\left(\mathcal{A}\left(-1, \l, \sqrt{\a}, \sqrt{\b}, n\right)+\mathcal{A}\left(-1, \l, \sqrt{\a}, \sqrt{\b}, -n\right)\right),
\end{align}}
where
\begin{align}\label{lalim}
L_{1}(\a, \b, \l)&:=\lim_{s\to-1}\bigg\{-\frac{\zeta(-s)}{2\l}+\frac{\zeta(-s)}{2^{3s+2}\pi^{s+1}\sqrt{\a\b}}\frac{\G(\l+s+1)}{\G(\l+1)}\nonumber\\
&\quad\times\left(\frac{1}{\sqrt{\a}}+\frac{1}{\sqrt{\b}}\right)^{2s}\pFq21{\l-s, -s}{\l+1}{\left(\frac{\sqrt{\a}-\sqrt{\b}}{\sqrt{\a}+\sqrt{\b}}\right)^2}\bigg\}.
\end{align}
The above limit is now evaluated. Note that the Laurent series expansion of $\zeta(s)$ around $s=1$ gives
\begin{equation}\label{laurent}
\zeta(s)=\frac{1}{s-1}+\g+O(|s-1|).
\end{equation}
Hence
\begin{align}\label{one}
-\frac{\zeta(-s)}{2\l}=\frac{1}{2\l(s+1)}-\frac{\g}{2\l}+O(|s+1|).
\end{align}
as $s\to-1$. Also,
\begin{align}
b^{s}&=\frac{1}{b}+\frac{\log b}{b}(s+1)+O(|s+1|^2),\label{bs}\\
\G(\l+s+1)&=\G(\l)+\G'(\l)(s+1)+O(|s+1|^2),\label{gls}
\end{align} 	
and
\begin{align}\label{hype}
&\pFq21{\l-s, -s}{\l+1}{\left(\frac{\sqrt{\a}-\sqrt{\b}}{\sqrt{\a}+\sqrt{\b}}\right)^2}\nonumber\\
&=\frac{1}{1-\left(\frac{\sqrt{\a}-\sqrt{\b}}{\sqrt{\a}+\sqrt{\b}}\right)^2}+(s+1)\lim_{s\to-1}\left[\frac{d}{ds}\pFq21{\l-s, -s}{\l+1}{\left(\frac{\sqrt{\a}-\sqrt{\b}}{\sqrt{\a}+\sqrt{\b}}\right)^2}\right]+O(|s+1|^2).
\end{align}
Next,
\begin{align}
&\lim_{s\to-1}\left[\frac{d}{ds}\pFq21{\l-s, -s}{\l+1}{\left(\frac{\sqrt{\a}-\sqrt{\b}}{\sqrt{\a}+\sqrt{\b}}\right)^2}\right]\nonumber\\
&=\sum_{k=1}^{\infty}\frac{1}{k!(1+\l)_k}\left(\frac{\sqrt{\a}-\sqrt{\b}}{\sqrt{\a}+\sqrt{\b}}\right)^{2k}\lim_{s\to-1}\left[(-s)_k\frac{d}{ds}(\l-s)_k+(\l-s)_k\frac{d}{ds}(-s)_k\right]
\end{align}
Using the fact
\begin{align*}
\frac{d}{d\xi}(\xi)_k=\frac{d}{d\xi}\frac{\G(\xi+k)}{\G(\xi)}=\frac{\G(\xi+k)}{\G(\xi)}\left(\psi(\xi+k)-\psi(\xi)\right),
\end{align*}
we find, after simplification, that
\begin{align}\label{derev}
&\lim_{s\to-1}\left[\frac{d}{ds}\pFq21{\l-s, -s}{\l+1}{\left(\frac{\sqrt{\a}-\sqrt{\b}}{\sqrt{\a}+\sqrt{\b}}\right)^2}\right]\nonumber\\
&=-\frac{(\sqrt{\a}+\sqrt{\b})^2}{4\sqrt{\a\b}}\left[\sum_{k=0}^{\infty}\frac{1}{1+\l+k}\left(\frac{\sqrt{\a}-\sqrt{\b}}{\sqrt{\a}+\sqrt{\b}}\right)^{2k+2}-\log\left(\frac{4\sqrt{\a\b}}{(\sqrt{\a}+\sqrt{\b})^2}\right)\right].
\end{align}
Now substitute \eqref{derev} in \eqref{hype} and use the resultant along with \eqref{bs} with $b=\frac{1}{8\pi}\left(\frac{1}{\sqrt{\a}}+\frac{1}{\sqrt{\b}}\right)^2$, and \eqref{gls} to obtain
\begin{align}\label{two}
&\frac{\zeta(-s)}{2^{3s+2}\pi^{s+1}\sqrt{\a\b}}\frac{\G(\l+s+1)}{\G(\l+1)}\left(\frac{1}{\sqrt{\a}}+\frac{1}{\sqrt{\b}}\right)^{2s}\pFq21{\l-s, -s}{\l+1}{\left(\frac{\sqrt{\a}-\sqrt{\b}}{\sqrt{\a}+\sqrt{\b}}\right)^2}\nonumber\\
&=\frac{1}{4\pi\sqrt{\a\b}\G(\l+1)}\left(\frac{1}{8\pi}\left(\frac{1}{\sqrt{\a}}+\frac{1}{\sqrt{\b}}\right)^2\right)^s\G(\l+s+1)\zeta(-s)\pFq21{\l-s, -s}{\l+1}{\left(\frac{\sqrt{\a}-\sqrt{\b}}{\sqrt{\a}+\sqrt{\b}}\right)^2}\nonumber\\
&=-\frac{1}{2\l(s+1)}+\frac{\g}{2\l}+\frac{1}{2\l}\log(2\pi\sqrt{\a\b})-\frac{1}{2\l}\psi(\l)+\frac{1}{2\l}\sum_{k=0}^{\infty}\frac{1}{1+\l+k}\left(\frac{\sqrt{\a}-\sqrt{\b}}{\sqrt{\a}+\sqrt{\b}}\right)^{2k+2}\nonumber\\
&\quad+O(|s+1|)
\end{align}
as $s\to-1$. Now add the corresponding sides of \eqref{one} and \eqref{two} so as to deduce from \eqref{lalim} that
\begin{align*}
L_{1}(\a, \b, \l)=\frac{1}{2\l}\left\{\sum_{k=0}^{\infty}\frac{1}{1+\l+k}\left(\frac{\sqrt{\a}-\sqrt{\b}}{\sqrt{\a}+\sqrt{\b}}\right)^{2k+2}+\log(2\pi\sqrt{\a\b})-\psi(\l)\right\}.
\end{align*}
Finally let $\l=1$ in \eqref{exotics-1}, simplify the above limit, use the facts $h(-1,1)=(\a-\b)/32$, where $h(s,\l)$ is defined in \eqref{hsl}, $\sum_{k=0}^{\infty}\xi^k/(k+2)=-1-\frac{\log(1-\xi)}{\xi}$ and $\mathcal{A}\left(-1, 1, \sqrt{\a}, \sqrt{\b}, n\right)=\frac{1}{4}\left(\frac{\sqrt{\a+in}-\sqrt{\b+in}}{\sqrt{\a+in}+\sqrt{\b+in}}\right)$
to arrive at \eqref{exoticeqn} after simplification.
\end{proof}
\section{Analytic continuation of Theorem \ref{anathmbdkz}}\label{ac}
Here we analytically continue Theorem \ref{anathmbdkz} to complex values of $\l$ such that Re$(\l)>-1$. The special case $\l=-1/2$ of the resulting identity is then shown to have a form surprisingly similar to Ramanujan's \eqref{ramin}.
\begin{theorem}\label{anathmbdkzac}
Let $\mathcal{A}(s, \l, z, w, t)$ be defined in \eqref{funa}. Let $|\arg \a|<\pi/2, |\arg \b|<\pi/2$ with \textup{Re}$(\sqrt{\a})>$ \textup{Re}$(\sqrt{\b})$. Let \textup{Re}$(\l)>-1$. 
Then for all complex $s$ such that either \textup{Re}$(s+\l)>0$ or $s=-\l$,
\begin{align}\label{anathmbdkzeqnac}
&\sum_{n=1}^{\infty}\sigma_s(n)\bigg\{I_{\l}\left(2\pi\overline{\epsilon}(\sqrt{n\a}-\sqrt{n\b})\right)K_{\l}\left(2\pi\overline{\epsilon}(\sqrt{n\a}+\sqrt{n\b})\right)\nonumber\\
&\qquad\qquad+I_{\l}\left(2\pi\epsilon(\sqrt{n\a}-\sqrt{n\b})\right)K_{\l}\left(2\pi\epsilon(\sqrt{n\a}+\sqrt{n\b})\right)\bigg\}\nonumber\\
&=-\frac{\zeta(-s)}{2\l}\left(\frac{\sqrt{\a}-\sqrt{\b}}{\sqrt{\a}+\sqrt{\b}}\right)^{\l}+\frac{\zeta(-s)}{2^{3s+2}\pi^{s+1}\sqrt{\a\b}}\frac{\G(\l+s+1)}{\G(\l+1)}\nonumber\\
&\quad\times\left(\frac{\sqrt{\a}-\sqrt{\b}}{\sqrt{\a}+\sqrt{\b}}\right)^{\l}\left(\frac{1}{\sqrt{\a}}+\frac{1}{\sqrt{\b}}\right)^{2s}\pFq21{\l-s, -s}{\l+1}{\left(\frac{\sqrt{\a}-\sqrt{\b}}{\sqrt{\a}+\sqrt{\b}}\right)^2}\nonumber\\
&\quad+\frac{\pi^{\l}(\a-\b)^{\l}\zeta(\l+1)\zeta(-s-\l)}{2^{\l+1}\G(\l+1)}+\frac{\G(\l+s+1)}{2^{3s+2}\pi^{s+1}\G(\l+1)}\sum_{n=1}^{\infty}\sigma_s(n)\bigg[\mathcal{A}\left(s, \l, \sqrt{\a}, \sqrt{\b}, n\right)\nonumber\\
&\quad+\mathcal{A}\left(s, \l, \sqrt{\a}, \sqrt{\b}, -n\right)+\frac{2^{2s-2\l+1}(\a-\b)^{\l}}{n^{\l+s+1}}\sin\left(\tfrac{\pi}{2}(\l+s)\right)\bigg].
\end{align}
\end{theorem}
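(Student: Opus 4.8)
The plan is to deduce Theorem \ref{anathmbdkzac} from Theorem \ref{anathmbdkz} by a regularization followed by analytic continuation in $\l$, the new explicit $\zeta(-s-\l)$ term being precisely the ``divergent part'' that must be peeled off from the series before the range of $\l$ can be enlarged past $\textup{Re}(\l)=0$. I would first work in the region where Theorem \ref{anathmbdkz} is already available, namely $\textup{Re}(\l)>0$ together with $\textup{Re}(s+\l)>0$, and rewrite the last series in \eqref{anathmbdkzeqn} by adding and subtracting its leading-order term. Sharpening \eqref{asssy} to a two-term expansion yields, as $n\to\infty$,
\begin{equation*}
\mathcal{A}(n)+\mathcal{A}(-n)=-\frac{2^{2s-2\l+1}(\a-\b)^{\l}}{n^{s+\l+1}}\sin\left(\tfrac{\pi}{2}(\l+s)\right)+O\left(n^{-\textup{Re}(s+\l)-2}\right),
\end{equation*}
so that adding $2^{2s-2\l+1}(\a-\b)^{\l}n^{-(s+\l+1)}\sin(\tfrac{\pi}{2}(\l+s))$ to each summand reproduces exactly the bracketed expression in \eqref{anathmbdkzeqnac}, while leaving a faster-decaying remainder.

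Next I would evaluate the series that has been subtracted off. Since $\textup{Re}(\l)>0$ and $\textup{Re}(s+\l)>0$ guarantee absolute convergence, the standard identity $\sum_{n=1}^{\infty}\sigma_s(n)n^{-w}=\zeta(w)\zeta(w-s)$ with $w=s+\l+1$ gives
\begin{equation*}
\sum_{n=1}^{\infty}\frac{\sigma_s(n)}{n^{s+\l+1}}=\zeta(s+\l+1)\zeta(\l+1).
\end{equation*}
Multiplying the subtracted contribution by the prefactor $\G(\l+s+1)/(2^{3s+2}\pi^{s+1}\G(\l+1))$ and invoking the functional equation \eqref{asyfe} with argument $s+\l$, that is,
\begin{equation*}
\G(s+\l+1)\zeta(s+\l+1)\sin\left(\tfrac{\pi}{2}(s+\l)\right)=-2^{s+\l}\pi^{s+\l+1}\zeta(-s-\l),
\end{equation*}
all powers of $2$ and $\pi$ collapse and the net contribution becomes $+\pi^{\l}(\a-\b)^{\l}\zeta(\l+1)\zeta(-s-\l)/(2^{\l+1}\G(\l+1))$, which is the new term in \eqref{anathmbdkzeqnac}. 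A short check at $s=-\l$, where this term is finite because $\zeta(0)=-\tfrac12$, shows that it reproduces the $h(s,\l)$ contribution of \eqref{anathmbdkzeqn} defined in \eqref{hsl}; thus the two formulations agree throughout $\textup{Re}(\l)>0$.

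Finally I would extend the resulting identity to $\textup{Re}(\l)>-1$ by analytic continuation. The left-hand side is analytic in $\l$ on the whole region, since its summand carries the exponential factor $e^{-2\pi\sqrt{2n\b}}$ that dominates any polynomial $\l$-dependence and renders the convergence locally uniform; on the right, every explicit term is meromorphic, and the regularized series now converges for $\textup{Re}(\l)>-1$ by the $O(n^{-\textup{Re}(s+\l)-2})$ bound above combined with $\sigma_s(n)=O(n^{\max(0,\textup{Re}(s))+\e})$. Fixing $s$ and moving $\l$ along a segment from a point with $\textup{Re}(\l)>0$ down to the target value, all of whose points keep $\textup{Re}(s+\l)>0$ by convexity of that half-plane, the identity persists; the exceptional line $s=-\l$ is covered by the direct verification just noted. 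The main obstacle is the very first step: one must upgrade the leading asymptotic \eqref{asssy} to a genuine two-term expansion of $\mathcal{A}(\pm n)$, which forces expanding $\sqrt{\a\pm in}$ and $\sqrt{\b\pm in}$, the quotient in \eqref{funa}, and the ${}_2F_1$ each to one further order in $1/n$. It is this computation that certifies both the precise coefficient $2^{2s-2\l+1}\sin(\tfrac{\pi}{2}(\l+s))$ of the subtracted term and the remainder estimate that drives convergence for $\textup{Re}(\l)>-1$.
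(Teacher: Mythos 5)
Your route is essentially the paper's own: the same regularization of the series in \eqref{anathmbdkzeqn} by adding and subtracting $2^{2s-2\l+1}(\a-\b)^{\l}n^{-(s+\l+1)}\sin\left(\tfrac{\pi}{2}(\l+s)\right)$ (the paper's expansion \eqref{ae}), the same evaluation $\sum_{n=1}^{\infty}\sigma_s(n)n^{-(s+\l+1)}=\zeta(s+\l+1)\zeta(\l+1)$, the same use of \eqref{asyfe} with $s$ replaced by $s+\l$ to produce the $\zeta(-s-\l)$ term, the same check via $\zeta(0)=-\tfrac12$ that this term reduces to the $h(s,\l)$ term of \eqref{anathmbdkzeqn} when $s=-\l$, and the same analytic continuation in $\l$. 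There is, however, one genuine gap: the exceptional line $s=-\l$ is \emph{not} covered for $-1<\textup{Re}(\l)\le 0$ by what you have written. Your direct verification establishes \eqref{anathmbdkzeqnac} on that line only for $\textup{Re}(\l)>0$ (where Theorem \ref{anathmbdkz} applies), while your continuation argument fixes $s$ and moves $\l$ strictly inside the open half-plane $\textup{Re}(s+\l)>0$, so it never reaches the line $s=-\l$. The missing case is not vacuous: Corollary \ref{ramanujan-type} is deduced precisely from $\l=-\tfrac12$, $s=\tfrac12=-\l$. The paper closes it with a separate continuation along the line $s=-\l$: there $\sin\left(\tfrac{\pi}{2}(\l+s)\right)\equiv 0$, so by \eqref{ae} the summands $\mathcal{A}(n)+\mathcal{A}(-n)$ are already $O(n^{-2})$, the unregularized series in \eqref{anathmbdkzeqn} converges and is analytic in $\l$ for $\textup{Re}(\l)>-1$, and both sides of \eqref{anathmbdkzeqn} with $s=-\l$ continue to that region. (Alternatively one can let $s\to-\l$ from within your continued identity, using locally uniform convergence of the regularized series; either way an explicit argument is required.)

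A second, smaller omission: the range $\textup{Re}(\l)>-1$ contains $\l=0$ (used in Corollary \ref{7.4}), where the two terms $-\frac{\zeta(-s)}{2\l}\left(\frac{\sqrt{\a}-\sqrt{\b}}{\sqrt{\a}+\sqrt{\b}}\right)^{\l}$ and $\frac{\pi^{\l}(\a-\b)^{\l}\zeta(\l+1)\zeta(-s-\l)}{2^{\l+1}\G(\l+1)}$ each have a simple pole, the latter coming from $\zeta(\l+1)$. Saying that every explicit term on the right is meromorphic does not by itself justify asserting the identity at $\l=0$; one must note, as the paper does, that these two poles cancel (equivalently, that the right-hand side, agreeing with the analytic left-hand side on an open set, has only a removable singularity there), so that \eqref{anathmbdkzeqnac} holds at $\l=0$ in the limiting sense.
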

\begin{proof}
First let Re$(\l)>0$ and Re$(s+\l)>0$. Rewrite \eqref{anathmbdkzeqn} in the form
\begin{align}\label{anathmbdkzeqno}
&\sum_{n=1}^{\infty}\sigma_s(n)\bigg\{I_{\l}\left(2\pi\overline{\epsilon}(\sqrt{n\a}-\sqrt{n\b})\right)K_{\l}\left(2\pi\overline{\epsilon}(\sqrt{n\a}+\sqrt{n\b})\right)\nonumber\\
&\qquad\qquad+I_{\l}\left(2\pi\epsilon(\sqrt{n\a}-\sqrt{n\b})\right)K_{\l}\left(2\pi\epsilon(\sqrt{n\a}+\sqrt{n\b})\right)\bigg\}\nonumber\\
&=-\frac{\zeta(-s)}{2\l}\left(\frac{\sqrt{\a}-\sqrt{\b}}{\sqrt{\a}+\sqrt{\b}}\right)^{\l}+\frac{\zeta(-s)}{2^{3s+2}\pi^{s+1}\sqrt{\a\b}}\frac{\G(\l+s+1)}{\G(\l+1)}\left(\frac{\sqrt{\a}-\sqrt{\b}}{\sqrt{\a}+\sqrt{\b}}\right)^{\l}\nonumber\\
&\quad\times\left(\frac{1}{\sqrt{\a}}+\frac{1}{\sqrt{\b}}\right)^{2s}\pFq21{\l-s, -s}{\l+1}{\left(\frac{\sqrt{\a}-\sqrt{\b}}{\sqrt{\a}+\sqrt{\b}}\right)^2}+\frac{\G(\l+s+1)}{2^{3s+2}\pi^{s+1}\G(\l+1)}\nonumber\\
&\quad\times\sum_{n=1}^{\infty}\sigma_s(n)\bigg[\mathcal{A}\left(s, \l, \sqrt{\a}, \sqrt{\b}, n\right)+\mathcal{A}\left(s, \l, \sqrt{\a}, \sqrt{\b}, -n\right)+\tfrac{2^{2s-2\l+1}(\a-\b)^{\l}}{n^{\l+s+1}}\sin\left(\tfrac{\pi}{2}(\l+s)\right)\bigg]\nonumber\\
&\quad-\frac{2^{-s-2\l-1}(\a-\b)^{\l}\G(\l+s+1)}{\pi^{s+1}\G(\l+1)}\zeta(\l+s+1)\zeta(\l+1)\sin\left(\frac{\pi}{2}(\l+s)\right),
\end{align}
where in the last step, we used \cite[p.~8, Equation (1.3.1)]{titch}
\begin{equation*}
\sum_{n=1}^{\infty}\frac{\sigma_s(n)}{n^{\l+s+1}}=\zeta(\l+s+1)\zeta(\l+1),
\end{equation*}
which is valid for Re$(\l)>0$ and Re$(s+\l)>0$.

Now it can be easily seen that
\begin{align}\label{ae}
\mathcal{A}\left(s, \l, \sqrt{\a}, \sqrt{\b}, n\right)+\mathcal{A}\left(s, \l, \sqrt{\a}, \sqrt{\b}, -n\right)&=-\frac{2^{2s-2\l+1}(\a-\b)^{\l}}{n^{\l+s+1}}\sin\left(\frac{\pi}{2}(\l+s)\right)\nonumber\\
&\quad+O_{\a, \b, \l, s}\left(n^{-\textup{Re}(\l)-\textup{Re}(s)-2}\right),
\end{align}
which implies that the series on the right-hand side of \eqref{anathmbdkzeqno} is analytic for Re$(\l)>-1$. Also
\begin{equation*}
-\frac{2^{-s-2\l-1}(\a-\b)^{\l}\G(\l+s+1)}{\pi^{s+1}\G(\l+1)}\zeta(\l+s+1)\zeta(\l+1)\sin\left(\frac{\pi}{2}(\l+s)\right)
\end{equation*}
is analytic for Re$(s+\l)>0$ and Re$(\l)>-1$ except for a simple pole at $\l=0$. However, the contribution of this pole is nullified by that of the simple pole of $-\frac{\zeta(-s)}{2\l}\left(\frac{\sqrt{\a}-\sqrt{\b}}{\sqrt{\a}+\sqrt{\b}}\right)^{\l}$ at $\l=0$. Thus by analytic continuation, we see that \eqref{anathmbdkzeqno} holds for Re$(\l)>-1$. Now use \eqref{asyfe} with $s$ replaced by $s+\l$ and simplify the right-hand side of \eqref{anathmbdkzeqno}. This proves \eqref{anathmbdkzeqnac} for Re$(\l)>-1$ when Re$(s+\l)>0$. 

Now if $s=-\l$, note that the leading term in the asymptotic expansion in \eqref{ae} vanishes and so instead of \eqref{anathmbdkzeqno}, we work with \eqref{anathmbdkzeqn} only. As in the previous case, it can be shown by analytic continuation that for $s=-\l$, \eqref{anathmbdkzeqn} is actually valid for Re$(\l)>-1$. It is now pleasing to note that the term containing $h(s, \l)$ in \eqref{anathmbdkzeqn} for $s=-\l$ is exactly the same as the term 
\begin{equation*}
\frac{\pi^{\l}(\a-\b)^{\l}\zeta(\l+1)\zeta(-s-\l)}{2^{\l+1}\G(\l+1)}
\end{equation*}
occurring in \eqref{anathmbdkzeqnac}, as can be seen from the fact that $\zeta(0)=-1/2$. Hence we conclude that \eqref{anathmbdkzeqnac} holds for Re$(\l)>-1$ even when $s=-\l$.

 %

\end{proof}

A few interesting corollaries of the above theorem are now given.
\begin{corollary}\label{ramanujan-type}
Let $\textup{Re}(\b)>0$. For \textup{Re}$(s)>-1/2$,
\begin{align}\label{ramanujan-typecor}
&\G\left(s+\frac{1}{2}\right)\bigg\{\frac{\zeta(-s)}{2\b^{s+\frac{1}{2}}}+\sum_{n=1}^{\infty}\frac{\sigma_s(n)}{2}\bigg[(\b-in)^{-s-\frac{1}{2}}+(\b+in)^{-s-\frac{1}{2}}\nonumber\\
&\qquad\qquad\quad+\frac{2}{n^{s+\frac{1}{2}}}\sin\bigg(\frac{\pi}{2}\bigg(s-\frac{1}{2}\bigg)\bigg)\bigg]\bigg\}\nonumber\\
&=(2\pi)^s\bigg\{-2\pi\sqrt{\pi \b}\zeta(-s)-\sqrt{\frac{\pi}{2}}\zeta\left(\frac{1}{2}\right)\zeta\left(\frac{1}{2}-s\right)\nonumber\\
&\qquad\qquad+\sqrt{\pi}\sum_{n=1}^{\infty}\frac{\sigma_s(n)}{\sqrt{n}}e^{-2\pi\sqrt{2n\b}}
\sin\left(\frac{\pi}{4}-2\pi\sqrt{2n\b}\right)\bigg\}.
\end{align}
\end{corollary}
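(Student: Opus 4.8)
The plan is to specialize Theorem \ref{anathmbdkzac} at $\l=-\tfrac12$, divide by $(\sqrt{\a}-\sqrt{\b})^{\l}$ and let $\a\to\b$, exactly as in the proofs of Corollaries \ref{dixmolcor} and \ref{dixmolcorgen}, and then read off the Ramanujan-type shape. I would begin with \eqref{anathmbdkzeqnac}, valid for $\textup{Re}(\l)>-1$ together with $\textup{Re}(s+\l)>0$; taking $\l=-\tfrac12$ forces $\textup{Re}(s)>\tfrac12$ at this stage, and the full range $\textup{Re}(s)>-\tfrac12$ will be recovered by analytic continuation at the very end.

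For the left-hand side I would not recompute the limit from scratch: dividing the $I_{\l}K_{\l}$ series by $(\sqrt{\a}-\sqrt{\b})^{\l}$ and letting $\a\to\b$ produces, term by term, exactly the quantity $L$ already evaluated in \eqref{l}, namely $\frac{(\pi\sqrt n)^{\l}}{\G(\l+1)}\big(e^{i\pi\l/4}K_{\l}(4\pi\e\sqrt{n\b})+e^{-i\pi\l/4}K_{\l}(4\pi\overline{\e}\sqrt{n\b})\big)$. Setting $\l=-\tfrac12$ and invoking \eqref{khalf} to write $K_{-1/2}(x)=\sqrt{\pi/(2x)}\,e^{-x}$, the two conjugate exponentials combine, and the identity $\cos(\tfrac\pi4+x)=\sin(\tfrac\pi4-x)$ turns $L$ into $\frac{e^{-2\pi\sqrt{2n\b}}}{\pi\sqrt2\,\sqrt n\,\b^{1/4}}\sin\!\big(\tfrac\pi4-2\pi\sqrt{2n\b}\big)$, which is the exponential-sine summand on the right of \eqref{ramanujan-typecor} up to an overall constant.

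On the right-hand side of \eqref{anathmbdkzeqnac} I would evaluate each piece under $(\sqrt\a-\sqrt\b)^{1/2}(\,\cdot\,)$ followed by $\a\to\b$. Using $\sqrt{\a+in}-\sqrt{\b+in}=(\a-\b)/(\sqrt{\a+in}+\sqrt{\b+in})$, the factor $\big(\tfrac{\sqrt{\a+in}-\sqrt{\b+in}}{\sqrt{\a+in}+\sqrt{\b+in}}\big)^{-1/2}(\sqrt\a-\sqrt\b)^{1/2}$ tends to $(2\sqrt\b)^{1/2}$ while the hypergeometric factor tends to $1$, so $\mathcal{A}(s,-\tfrac12,\sqrt\a,\sqrt\b,\pm n)$ collapses to a constant multiple of $(\b\pm in)^{-s-1/2}$, and the correction term $\tfrac{2^{2s-2\l+1}(\a-\b)^{\l}}{n^{\l+s+1}}\sin(\tfrac\pi2(\l+s))$ collapses to the same multiple of $\tfrac{2}{n^{s+1/2}}\sin(\tfrac\pi2(s-\tfrac12))$, so that the bracket of \eqref{ramanujan-typecor} is reproduced. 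The first two non-series terms produce respectively the $\sqrt\b\,\zeta(-s)$ and $\b^{-s-1/2}\zeta(-s)$ contributions, while $\tfrac{\pi^{\l}(\a-\b)^{\l}\zeta(\l+1)\zeta(-s-\l)}{2^{\l+1}\G(\l+1)}$ yields the $\zeta(\tfrac12)\zeta(\tfrac12-s)$ contribution. Multiplying the resulting identity through by $2^{s+1/2}\pi^{s+3/2}\b^{1/4}$ and moving the $(\b\pm in)$-sum to the left aligns every coefficient with \eqref{ramanujan-typecor}.

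The main obstacle I expect is twofold. First, the constant bookkeeping across these five limits must be done carefully so that the single normalizing factor $2^{s+1/2}\pi^{s+3/2}\b^{1/4}$ simultaneously sends the exponential-sine coefficient to $(2\pi)^s\sqrt\pi$, the $(\b\pm in)$-coefficient to $\tfrac12\G(s+\tfrac12)$, and the three $\zeta$-terms to their stated values, the relative signs being the most delicate point. Second, the passage from $\textup{Re}(s)>\tfrac12$ to $\textup{Re}(s)>-\tfrac12$ requires showing both sides are analytic there: the exponential-sine series converges for all $s$ by exponential decay, whereas the bracketed series on the left converges only because its leading asymptotics cancel---indeed $(\b-in)^{-s-1/2}+(\b+in)^{-s-1/2}+\tfrac{2}{n^{s+1/2}}\sin(\tfrac\pi2(s-\tfrac12))=O\!\big(n^{-\textup{Re}(s)-3/2}\big)$, precisely as arranged in \eqref{ae}, so that against $\sigma_s(n)$ it converges for $\textup{Re}(s)>-\tfrac12$. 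With all $\zeta$-factors analytic on this half-plane, analytic continuation completes the proof.
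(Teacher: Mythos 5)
Your proposal is correct and follows essentially the same route as the paper: specialize Theorem \ref{anathmbdkzac} at $\l=-\tfrac12$, normalize by the appropriate power of $\sqrt{\a}-\sqrt{\b}$, let $\a\to\b$, and pass from \textup{Re}$(s)>\tfrac12$ (or $s=\tfrac12$) to \textup{Re}$(s)>-\tfrac12$ by analytic continuation, the convergence of the bracketed series resting on exactly the cancellation recorded in \eqref{ae}. The only difference in execution is that the paper first rewrites both sides in closed form for $\a\neq\b$ --- using $I_{-1/2}(\xi)=\sqrt{2/(\pi\xi)}\cosh\xi$, \eqref{khalf}, and the quadratic evaluation $\pFq21{-\frac{1}{2}-s, -s}{\frac{1}{2}}{\xi}=\frac{1}{2}\left\{\left(1-\sqrt{\xi}\right)^{2s+1}+\left(1+\sqrt{\xi}\right)^{2s+1}\right\}$, under which each $\mathcal{A}$-term splits exactly into $(\a\pm in)^{-s-\frac12}+(\b\pm in)^{-s-\frac12}$ (see \eqref{lhs}, \eqref{rhs}) --- and only then takes $\a\to\b$, whereas you take the confluent limit term by term with the ${}_2F_1$ tending to $1$ and reuse the limit \eqref{l}; both are legitimate and land on \eqref{rem3}. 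One bookkeeping slip worth fixing: the factor $\left(\frac{\sqrt{\a+in}-\sqrt{\b+in}}{\sqrt{\a+in}+\sqrt{\b+in}}\right)^{-1/2}(\sqrt{\a}-\sqrt{\b})^{1/2}$ tends to $2\sqrt{\b+in}\,/(2\sqrt{\b})^{1/2}$, not to $(2\sqrt{\b})^{1/2}$; with the value you state, the $\mathcal{A}$-terms would collapse to multiples of $(\b\pm in)^{-s-1}$ rather than $(\b\pm in)^{-s-\frac12}$, so the corrected limit is what actually delivers the conclusion you (correctly) draw, and the same $n$-independent constant then multiplies the $n^{-s-\frac12}\sin\left(\frac{\pi}{2}\left(s-\frac12\right)\right)$ correction term, as needed.
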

\begin{proof}
Let $\l=-1/2$ in Theorem \ref{anathmbdkzac}. Using \eqref{khalf} and \cite[p.~925, formulas \textbf{8.467}, \textbf{8.469.3}]{grn}
\begin{equation*}
I_{-\frac{1}{2}}(\xi)=\sqrt{\frac{2}{\pi\xi}}\cosh(\xi),
\end{equation*}
we see that the left-hand side of \eqref{anathmbdkzeqnac} simplifies to
\begin{align}\label{lhs}
&\frac{1}{2\pi\sqrt{\a-\b}}\sum_{n=1}^{\infty}\frac{\sigma_s(n)}{\sqrt{n}}\bigg\{\e \exp{\left(-2\pi\overline{\e}\sqrt{n}(\sqrt{\a}+\sqrt{\b})\right)}\cosh\left(2\pi\overline{\e}\sqrt{n}(\sqrt{\a}-\sqrt{\b})\right)\nonumber\\
&\qquad\qquad\qquad\qquad\quad+\overline{\e} \exp{\left(-2\pi\e\sqrt{n}(\sqrt{\a}+\sqrt{\b})\right)}\cosh\left(2\pi\e\sqrt{n}(\sqrt{\a}-\sqrt{\b})\right)\bigg\},
\end{align}
where as the right-hand side, upon using the identity \cite[p.~389, Formula \textbf{7.3.1.106}]{pbm}
\begin{equation*}
\pFq21{-\frac{1}{2}-s, -s}{\frac{1}{2}}{\xi}=\frac{1}{2}\left\{\left(1-\sqrt{\xi}\right)^{2s+1}+\left(1+\sqrt{\xi}\right)^{2s+1}\right\},
\end{equation*}
results in 
\begin{align}\label{rhs}
&\frac{\zeta(-s)(\sqrt{\a}+\sqrt{\b})}{\sqrt{\a-\b}}+\frac{\G\left(s+\frac{1}{2}\right)\zeta(-s)}{2^{s+2}\pi^{s+\frac{3}{2}}\sqrt{\a-\b}}\left(\frac{1}{\a^{s+\frac{1}{2}}}+\frac{1}{\b^{s+\frac{1}{2}}}\right)+\frac{\zeta\left(\frac{1}{2}\right)\zeta\left(\frac{1}{2}-s\right)}{\pi\sqrt{2}\sqrt{\a-\b}}\nonumber\\
&+\frac{\G\left(s+\frac{1}{2}\right)}{2^{s+2}\pi^{s+\frac{3}{2}}\sqrt{\a-\b}}\sum_{n=1}^{\infty}\sigma_{s}(n)\bigg[\frac{1}{(\a+in)^{s+\frac{1}{2}}}+\frac{1}{(\b+in)^{s+\frac{1}{2}}}\nonumber\\
&\quad+\frac{1}{(\a-in)^{s+\frac{1}{2}}}+\frac{1}{(\b-in)^{s+\frac{1}{2}}}+\frac{4}{n^{s+\frac{1}{2}}}\sin\left(\frac{\pi}{2}\left(s-\frac{1}{2}\right)\right)\bigg].
\end{align}
Now multiply the expressions in \eqref{lhs} and \eqref{rhs} by $\pi\sqrt{\a-\b}$ and let $\a\to\b$ in the resulting equality so as to get upon simplification
\begin{align}\label{rem3}
&\sum_{n=1}^{\infty}\frac{\sigma_s(n)}{\sqrt{n}}e^{-2\pi\sqrt{2n\b}}\sin\left(\frac{\pi}{4}-2\pi\sqrt{2n\b}\right)\nonumber\\
&=2\pi\sqrt{\b}\zeta(-s)+\frac{\G\left(s+\frac{1}{2}\right)\zeta(-s)}{\sqrt{2}(2\pi\b)^{s+\frac{1}{2}}}+\frac{1}{\sqrt{2}}\zeta\left(\frac{1}{2}\right)\zeta\left(\frac{1}{2}-s\right)\nonumber\\
&\quad+\frac{\G\left(s+\frac{1}{2}\right)}{\sqrt{2}(2\pi)^{s+\frac{1}{2}}}\sum_{n=1}^{\infty}\sigma_s(n)\left[\frac{1}{(\b+in)^{s+\frac{1}{2}}}+\frac{1}{(\b-in)^{s+\frac{1}{2}}}+\frac{2}{n^{s+\frac{1}{2}}}\sin\left(\frac{\pi}{2}\left(s-\frac{1}{2}\right)\right)\right].
\end{align}
Finally multiply both sides by $2^s\pi^{s+\frac{1}{2}}$ and rearrange so as to arrive at \eqref{ramanujan-typecor} for Re$(s)>1/2$ or $s=1/2$. By analytic continuation, the result holds for Re$(s)>-1/2$.
\end{proof}
\begin{remark}
Corollary \ref{ramanujan-type} bears striking resemblance to Ramanujan's incorrect identity \eqref{ramin}. Even though the summands of the corresponding infinite series in \eqref{ramanujan-typecor} and \eqref{ramin} have wrong signs, the infinite series in \eqref{ramanujan-typecor} remarkably have the same forms compared to those in \eqref{ramin}. Same is the case with two of the residual terms in \eqref{ramanujan-typecor}, which, except for the absence of the factor $\tan\left(\frac{1}{2}\pi s\right)$ in them, are exactly the same as the corresponding ones in \eqref{ramin}. We thus claim that our Corollary \ref{ramanujan-type} can be considered as the corrected version of Ramanujan's identity \eqref{ramin}.
\end{remark}
\begin{remark}
It can be shown without much effort that Theorem \textup{1.3} from \cite{bdrz} is actually equivalent to Corollary \ref{ramanujan-type}. However, because the former was left in an unsimplified form in general in \cite{bdrz}, its closeness to Ramanujan's incorrect identity \eqref{ramin} that we have demonstrated above got unnoticed. The form was simplified only for $s=2m+1/2, m\in\mathbb{N}\cup\{0\}$, for example, see \cite[Theorem 5.2]{bdrz}. In light of this, we would like to emphasize here that Theorem \textup{1.3} from \cite{bdrz} is but a special case of our Theorem \textup{\ref{anathmbdkzac}}.
\end{remark}
\begin{remark}
The case $\l=\frac{1}{2}\neq-s$ of Theorem \ref{anathmbdkz} results in an identity which can be obtained by merely replacing $\b$ by $\a$ in \eqref{rem3} and then substracting \eqref{rem3} as it is from the resulting former identity. 
\end{remark}

\noindent
Corollary \ref{ramanujan-type}, in turn, gives the following new formula for $\zeta^{2}(1/2)$.
\begin{corollary}\label{ramanujan-type0}
For \textup{Re}$(\b)>0$,
\begin{align*}
&\sum_{n=1}^{\infty}\frac{d(n)}{\sqrt{n}}e^{-2\pi\sqrt{2n\b}}\sin\left(\frac{\pi}{4}-2\pi\sqrt{2n\b}\right)\nonumber\\
&=\frac{1}{\sqrt{2}}\zeta^{2}\left(\frac{1}{2}\right)-\pi\sqrt{\b}-\frac{1}{4\sqrt{\b}}+\frac{1}{2}\sum_{n=1}^{\infty}d(n)\left(\frac{1}{\sqrt{\b+in}}+\frac{1}{\sqrt{\b-in}}-\frac{\sqrt{2}}{\sqrt{n}}\right).
\end{align*}
\end{corollary}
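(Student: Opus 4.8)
The plan is to obtain this identity as the special case $s=0$ of Corollary \ref{ramanujan-type}. Since $0>-\tfrac12$, the hypothesis \textup{Re}$(s)>-\tfrac12$ of that corollary is satisfied, so no further analytic continuation is needed; the whole argument reduces to substituting $s=0$ into \eqref{ramanujan-typecor} and simplifying the resulting special values. First I would record the evaluations $\sigma_0(n)=d(n)$, $\G\!\left(\tfrac12\right)=\sqrt{\pi}$, $\zeta(0)=-\tfrac12$, and $\sin\!\left(\tfrac{\pi}{2}\!\left(0-\tfrac12\right)\right)=\sin\!\left(-\tfrac{\pi}{4}\right)=-\tfrac{1}{\sqrt 2}$.

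Substituting these into the left-hand side of \eqref{ramanujan-typecor} collapses the bracketed expression: the term $\G\!\left(\tfrac12\right)\zeta(0)/(2\sqrt{\b})$ becomes $-\tfrac{\sqrt{\pi}}{4\sqrt{\b}}$, while the regularizing factor $\tfrac{2}{n^{s+1/2}}\sin\!\left(\tfrac{\pi}{2}(s-\tfrac12)\right)$ specializes to $-\tfrac{\sqrt 2}{\sqrt n}$, so the left-hand side reduces to $\sqrt{\pi}\left\{-\tfrac{1}{4\sqrt{\b}}+\tfrac12\sum_{n=1}^{\infty}d(n)\left(\tfrac{1}{\sqrt{\b+in}}+\tfrac{1}{\sqrt{\b-in}}-\tfrac{\sqrt 2}{\sqrt n}\right)\right\}$. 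On the right-hand side, the prefactor $(2\pi)^s$ becomes $1$, the term $-2\pi\sqrt{\pi\b}\,\zeta(0)$ becomes $+\pi\sqrt{\pi\b}$, and $\zeta\!\left(\tfrac12-s\right)$ specializes to $\zeta\!\left(\tfrac12\right)$, so that $\zeta\!\left(\tfrac12\right)\zeta\!\left(\tfrac12-s\right)$ turns into $\zeta^2\!\left(\tfrac12\right)$.

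Finally I would divide both sides by $\sqrt{\pi}$ and solve for the series $\sum_{n=1}^{\infty}\tfrac{d(n)}{\sqrt n}e^{-2\pi\sqrt{2n\b}}\sin\!\left(\tfrac{\pi}{4}-2\pi\sqrt{2n\b}\right)$, transferring the residual terms $\pi\sqrt{\b}$, $-\tfrac{1}{\sqrt 2}\zeta^2\!\left(\tfrac12\right)$, and the finite divisor sum to the opposite side. This produces exactly the stated formula, with the constant terms assembling into $\tfrac{1}{\sqrt 2}\zeta^2\!\left(\tfrac12\right)-\pi\sqrt{\b}-\tfrac{1}{4\sqrt{\b}}$. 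There is no genuine obstacle here: the only point requiring care is the sign bookkeeping coming from $\zeta(0)=-\tfrac12$ and $\sin\!\left(-\tfrac{\pi}{4}\right)<0$, together with verifying that the two appearances of the $-\tfrac{\sqrt 2}{\sqrt n}$ regularizing term (one inside the divisor sum of Corollary \ref{ramanujan-type} at $s=0$, the other forced by the rearrangement) combine consistently into the single convergent series $\tfrac12\sum_{n=1}^{\infty}d(n)\left(\tfrac{1}{\sqrt{\b+in}}+\tfrac{1}{\sqrt{\b-in}}-\tfrac{\sqrt 2}{\sqrt n}\right)$ displayed in the statement.
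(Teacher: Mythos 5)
Your proposal is correct and is exactly the paper's own proof: the paper simply says to set $s=0$ in Corollary \ref{ramanujan-type} (equivalently in \eqref{rem3}) and simplify, which is precisely your substitution using $\sigma_0(n)=d(n)$, $\Gamma(\tfrac12)=\sqrt{\pi}$, $\zeta(0)=-\tfrac12$, and $\sin(-\tfrac{\pi}{4})=-\tfrac{1}{\sqrt{2}}$. All of your sign bookkeeping checks out, and the regularizing term $-\tfrac{\sqrt{2}}{\sqrt{n}}$ carries through the sum unchanged (there is only its single appearance, not two), yielding the stated identity.
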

\begin{proof}
Set $s=0$ in Corollary \ref{ramanujan-type}, or equivalently, in \eqref{rem3}, and simplify.
\end{proof}
Another special case of Theorem \ref{anathmbdkzac} is
\begin{corollary}\label{7.4}
Let $|\arg \a|<\pi/2, |\arg \b|<\pi/2$ with \textup{Re}$(\sqrt{\a})>$\textup{Re}$(\sqrt{\b})$. Then
\begin{align}\label{anathmbdkzeqnac00}
&\sum_{n=1}^{\infty}d(n)\bigg\{I_{0}\left(2\pi\overline{\epsilon}(\sqrt{n\a}-\sqrt{n\b})\right)K_{0}\left(2\pi\overline{\epsilon}(\sqrt{n\a}+\sqrt{n\b})\right)\nonumber\\
&\qquad\qquad+I_{0}\left(2\pi\epsilon(\sqrt{n\a}-\sqrt{n\b})\right)K_{0}\left(2\pi\epsilon(\sqrt{n\a}+\sqrt{n\b})\right)\bigg\}\nonumber\\
&=-\frac{\g}{2}+\frac{1}{2}\log\left(\tfrac{2}{\sqrt{\a}+\sqrt{\b}}\right)-\frac{1}{8\pi\sqrt{\a\b}}+\frac{1}{4\pi}\sum_{n=1}^{\infty}d(n)\left\{\frac{1}{\sqrt{\a+in}\sqrt{\b+in}}+\frac{1}{\sqrt{\a-in}\sqrt{\b-in}}\right\}.
\end{align}
\end{corollary}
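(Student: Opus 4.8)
The plan is to obtain Corollary~\ref{7.4} as the limiting case $s=0$, $\l\to 0^{+}$ of Theorem~\ref{anathmbdkzac}. For $s=0$ the hypothesis $\textup{Re}(s+\l)>0$ becomes $\textup{Re}(\l)>0$, so \eqref{anathmbdkzeqnac} is available for every $\l$ in a punctured right neighbourhood of $0$, and I would pass to the limit $\l\to 0^{+}$ on both sides. On the left, $I_{\l}\to I_{0}$ and $K_{\l}\to K_{0}$, and the series converges uniformly for small $\l$ because its summand decays exponentially (by the asymptotics of products $I_{\l}(\xi)K_{\l}(\xi)$ discussed in the introduction, via \eqref{asymbess2} and \eqref{iasy}); hence the left-hand side tends to the series in \eqref{anathmbdkzeqnac00}.

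On the right I would first record the simplifications forced by $s=0$: one has $\zeta(-s)=\zeta(0)=-\tfrac12$, the upper parameter $-s=0$ collapses every $\pFq21{\l-s,-s}{\l+1}{\cdot}$ to $1$, and substituting $z=\sqrt{\a}$, $w=\sqrt{\b}$ in \eqref{funa} reduces $\mathcal{A}(0,\l,\sqrt{\a},\sqrt{\b},t)$ to $\frac{1}{\sqrt{\a+it}\sqrt{\b+it}}\left(\frac{\sqrt{\a+it}-\sqrt{\b+it}}{\sqrt{\a+it}+\sqrt{\b+it}}\right)^{\l}$, while the prefactor $\frac{\G(\l+s+1)}{2^{3s+2}\pi^{s+1}\G(\l+1)}$ becomes $\frac{1}{4\pi}$. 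The Bessel series on the right is then $\frac{1}{4\pi}\sum_{n\ge1}d(n)\big[\mathcal{A}(0,\l,\sqrt{\a},\sqrt{\b},n)+\mathcal{A}(0,\l,\sqrt{\a},\sqrt{\b},-n)+2^{1-2\l}(\a-\b)^{\l}n^{-\l-1}\sin(\tfrac{\pi\l}{2})\big]$. As $\l\to0^{+}$ the factor $\sin(\tfrac{\pi\l}{2})\to0$, and at $\l=0$ the two $\mathcal{A}$-terms combine to $\frac{1}{\sqrt{\a+in}\sqrt{\b+in}}+\frac{1}{\sqrt{\a-in}\sqrt{\b-in}}=O(n^{-2})$ by an elementary large-$n$ expansion; together with \eqref{ae} this dominates the summand by $C\,d(n)n^{-2}$ uniformly for small $\l$, so dominated convergence gives the limiting series $\frac{1}{4\pi}\sum_{n\ge1}d(n)\big\{\frac{1}{\sqrt{\a+in}\sqrt{\b+in}}+\frac{1}{\sqrt{\a-in}\sqrt{\b-in}}\big\}$, the last term of \eqref{anathmbdkzeqnac00}.

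The heart of the argument is the three non-series terms on the right of \eqref{anathmbdkzeqnac}, which at $s=0$ read
\begin{equation*}
\frac{1}{4\l}\left(\frac{\sqrt{\a}-\sqrt{\b}}{\sqrt{\a}+\sqrt{\b}}\right)^{\l}-\frac{1}{8\pi\sqrt{\a\b}}\left(\frac{\sqrt{\a}-\sqrt{\b}}{\sqrt{\a}+\sqrt{\b}}\right)^{\l}+\frac{\pi^{\l}(\a-\b)^{\l}\zeta(\l+1)\zeta(-\l)}{2^{\l+1}\G(\l+1)}.
\end{equation*}
The first and third each carry a simple pole at $\l=0$ (from $1/\l$ and from $\zeta(\l+1)=\tfrac1\l+\g+O(\l)$ respectively), and the plan is to show the poles cancel and to read off the finite part. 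Using $\zeta(-\l)=-\tfrac12+\tfrac{\l}{2}\log(2\pi)+O(\l^{2})$ (that is, $\zeta'(0)=-\tfrac12\log 2\pi$), together with $x^{\l}=1+\l\log x+O(\l^{2})$ for each base and $1/\G(\l+1)=1+\g\l+O(\l^{2})$, the first term expands as $\tfrac1{4\l}+\tfrac14\log\tfrac{\sqrt{\a}-\sqrt{\b}}{\sqrt{\a}+\sqrt{\b}}+O(\l)$ and the third as $-\tfrac1{4\l}-\tfrac\g2+\tfrac14\log\tfrac{4}{\a-\b}+O(\l)$, so the $\tfrac1{4\l}$ singularities cancel. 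Adding the middle term, which tends to $-\tfrac1{8\pi\sqrt{\a\b}}$, and combining the logarithms through $\a-\b=(\sqrt{\a}-\sqrt{\b})(\sqrt{\a}+\sqrt{\b})$ collapses the finite part to $-\tfrac\g2+\tfrac12\log\tfrac{2}{\sqrt{\a}+\sqrt{\b}}-\tfrac1{8\pi\sqrt{\a\b}}$, exactly the non-series part of \eqref{anathmbdkzeqnac00}.

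The main obstacle is precisely this last step: because two of the residual terms blow up individually as $\l\to0$, one must expand each to order $O(1)$ and verify both the exact cancellation of the $\tfrac1{4\l}$ poles and the precise value of the surviving constant, the latter resting on $\zeta'(0)=-\tfrac12\log 2\pi$ and on the logarithmic identity $\tfrac{\sqrt{\a}-\sqrt{\b}}{\sqrt{\a}+\sqrt{\b}}\cdot\tfrac{4}{\a-\b}=\tfrac{4}{(\sqrt{\a}+\sqrt{\b})^{2}}$. A secondary technical point is justifying the interchange of limit and summation in the Bessel series, which follows from the uniform $C\,d(n)n^{-2}$ domination described above; everything else is routine algebra.
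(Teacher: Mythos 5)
Your proposal is correct and follows essentially the same route as the paper: specialize Theorem \ref{anathmbdkzac}, observe that the hypergeometric factors trivialize, and evaluate the $\l\to0$ limit in which the $\tfrac{1}{4\l}$ pole of $-\zeta(-s)(2\l)^{-1}\left(\frac{\sqrt{\a}-\sqrt{\b}}{\sqrt{\a}+\sqrt{\b}}\right)^{\l}$ cancels against that of the $\zeta(\l+1)\zeta(-s-\l)$ term, using $\zeta(0)=-\tfrac12$ and $\zeta'(0)=-\tfrac12\log 2\pi$ to extract the finite part. The only (inessential) difference is order of specialization: you set $s=0$ before letting $\l\to0$, whereas the paper computes the limit $L_2(s,\a,\b)$ for general $s$ first and then puts $s=0$.
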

\begin{proof}
Let $\l=0$ in Theorem \ref{anathmbdkzeqnac}. This gives for Re$(s)>0$ or $s=0$,
{\allowdisplaybreaks\begin{align}\label{anathmbdkzeqnac001}
&\sum_{n=1}^{\infty}\sigma_s(n)\bigg\{I_{0}\left(2\pi\overline{\epsilon}(\sqrt{n\a}-\sqrt{n\b})\right)K_{0}\left(2\pi\overline{\epsilon}(\sqrt{n\a}+\sqrt{n\b})\right)\nonumber\\
&\qquad\qquad+I_{0}\left(2\pi\epsilon(\sqrt{n\a}-\sqrt{n\b})\right)K_{0}\left(2\pi\epsilon(\sqrt{n\a}+\sqrt{n\b})\right)\bigg\}\nonumber\\
&=L_2(s, \a, \b)+\frac{\zeta(-s)\G(s+1)}{2^{3s+2}\pi^{s+1}\sqrt{\a\b}}\left(\frac{1}{\sqrt{\a}}+\frac{1}{\sqrt{\b}}\right)^{2s}\pFq21{-s, -s}{1}{\left(\frac{\sqrt{\a}-\sqrt{\b}}{\sqrt{\a}+\sqrt{\b}}\right)^2}\nonumber\\
&\quad+\frac{\G(s+1)}{2^{3s+2}\pi^{s+1}}\sum_{n=1}^{\infty}\sigma_s(n)\left\{\mathcal{A}\left(s,0,\sqrt{\a}, \sqrt{\b}, n\right)+\mathcal{A}\left(s,0,\sqrt{\a}, \sqrt{\b}, -n\right)+\frac{2^{2s+1}}{n^{s+1}}\sin\left(\frac{\pi s}{2}\right)\right\},
\end{align}}
where
\begin{align}\label{ell1sab}
L_2(s, \a, \b):=\lim_{\l\to 0}\left(-\frac{\zeta(-s)}{2\l}\left(\frac{\sqrt{\a}-\sqrt{\b}}{\sqrt{\a}+\sqrt{\b}}\right)^{\l}+\frac{\pi^{\l}(\a-\b)^{\l}\zeta(\l+1)\zeta(-s-\l)}{2^{\l+1}\G(\l+1)}\right).
\end{align}
We only need to evaluate the above limit. Now
\begin{align}\label{l11}
-\frac{\zeta(-s)}{2\l}\left(\frac{\sqrt{\a}-\sqrt{\b}}{\sqrt{\a}+\sqrt{\b}}\right)^{\l}=-\frac{\zeta(-s)}{2\l}-\frac{1}{2}\zeta(-s)\log\left(\frac{\sqrt{\a}-\sqrt{\b}}{\sqrt{\a}+\sqrt{\b}}\right)+O_{\a, \b, s}(|\l|)
\end{align}
as $\l\to0$. The power series expansion of $1/\G(\xi)$ about $\xi=0$ \cite[Equation (22)]{wrench} implies
\begin{equation}\label{wrench}
\frac{1}{\G(\xi)}=\xi+\g\xi^2+O(|\xi|^3).
\end{equation}
Now \eqref{laurent}, \eqref{wrench}, along with the standard Taylor expansions of $\left(\frac{\pi}{2}(\a-\b)\right)^{\l}$ and $\zeta(-s-\l)$ about $\l=0$, together give
\begin{align}\label{l12}
\frac{\pi^{\l}(\a-\b)^{\l}\zeta(\l+1)\zeta(-s-\l)}{2^{\l+1}\G(\l+1)}&=\frac{1}{2}\left\{1+\l\log\left(\tfrac{\pi}{2}(\a-\b)\right)+O_{\a,\b}(|\l|^2)\right\}\left\{\tfrac{1}{\l}+\g+O(|\l|)\right\}\nonumber\\
&\quad\times\left\{\zeta(-s)-\l\zeta'(-s)+O_{s}(|\l|^2)\right\}\left\{1+\g\l+O(|\l|^2)\right\}\nonumber\\
&=\frac{\zeta(-s)}{2\l}+\frac{1}{2}\left(2\g+\log\left(\frac{\pi}{2}(\a-\b)\right)\right)\zeta(-s)-\frac{1}{2}\zeta'(-s)\nonumber\\
&\quad+O_{s, \a, \b}(|\l|).
\end{align}
From \eqref{l11} and \eqref{l12},
\begin{align}\label{l13}
L_2(s, \a, \b)=\left(\g+\frac{1}{2}\log\left(\frac{\pi}{2}\left(\sqrt{\a}+\sqrt{\b}\right)^2\right)\right)\zeta(-s)-\frac{1}{2}\zeta'(-s).
\end{align}
Finally substitute \eqref{l13} in \eqref{anathmbdkzeqnac001}, then let $s=0$ and simplify to derive \eqref{anathmbdkzeqnac00}.
\end{proof}
\begin{remark}
If we let $\a\to\b$ in \eqref{anathmbdkzeqnac00}, use the fact that $I_{0}(0)=1$ and simplify, we obtain \eqref{koshvor}. Thus, \eqref{anathmbdkzeqnac00} is a new generalization of \eqref{koshvor}, different from \eqref{dixmol}.
\end{remark}
The corollary of Theorem \ref{anathmbdkzac} stated below gives a transformation for the series which is almost the same as the left-hand side of \eqref{dixmol} except that $\sigma_{-\l}(n)$ is replaced by $\sigma_{\l}(n)$.
\begin{corollary}\label{soninetrans}
Let \textup{Re}$(\beta)>0$. For \textup{Re}$(\l)>0$ or $\l=0$,
\begin{align*}
&2 \sum_{n=1}^{\infty} \sigma_{\l}(n) n^{\frac{\l}{2}} 
\left( e^{\frac{\pi i \l}{4}} K_{\l}( 4 \pi e^{\frac{\pi i}{4}} \sqrt{n\b} ) +
 e^{-\frac{\pi i \l}{4}} K_{\l}( 4 \pi e^{-\frac{\pi i}{4}} \sqrt{n\b} ) \right)\nonumber\\
&=-\frac{\G(\l)\zeta(-\l)}{(2\pi\sqrt{\b})^{\l}}+\frac{\G(2\l+1)\zeta(-\l)}{(2\pi)^{2\l+1}\b^{\frac{3\l}{2}+1}}+\b^{\frac{\l}{2}}\zeta(\l+1)\zeta(-2\l)\nonumber\\
&\quad+\frac{\b^{\frac{\l}{2}}\G(2\l+1)}{(2\pi)^{2\l+1}}\sum_{n=1}^{\infty}\sigma_{\l}(n)\left\{\frac{1}{(\b+in)^{2\l+1}}+\frac{1}{(\b-in)^{2\l+1}}+\frac{2\sin(\pi\l)}{n^{2\l+1}}\right\}.
\end{align*}
\end{corollary}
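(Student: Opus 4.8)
The plan is to specialise Theorem \ref{anathmbdkzac} to $s=\l$ and then run the same $\a\to\b$ limiting argument that produced Corollary \ref{dixmolcor}, the only structural change being that now the Dirichlet coefficient is $\sigma_{s}(n)=\sigma_{\l}(n)$ rather than $\sigma_{-\l}(n)$. Since Theorem \ref{anathmbdkzac} demands either \textup{Re}$(s+\l)>0$ or $s=-\l$, the substitution $s=\l$ is legitimate precisely for \textup{Re}$(\l)>0$, and the endpoint $\l=0$ will be recovered at the very end. The decisive simplification is that with $s=\l$ the Gauss function collapses, $\pFq21{\l-s,-s}{\l+1}{\xi}=\pFq21{0,-\l}{\l+1}{\xi}=1$, so the hypergeometric factor disappears both from the second (residual) term of \eqref{anathmbdkzeqnac} and from each $\mathcal{A}(\l,\l,\sqrt{\a},\sqrt{\b},\cdot)$ coming from \eqref{funa}.

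Next I would divide \eqref{anathmbdkzeqnac} (with $s=\l$) by $(\sqrt{\a}-\sqrt{\b})^{\l}$ and let $\a\to\b$. On the left this is exactly the limit already recorded in \eqref{l}, which converts the Bessel-product series into $\G(\l+1)^{-1}\sum_{n}\sigma_{\l}(n)(\pi\sqrt{n})^{\l}\big(e^{i\pi\l/4}K_{\l}(4\pi\e\sqrt{n\b})+e^{-i\pi\l/4}K_{\l}(4\pi\overline{\e}\sqrt{n\b})\big)$. On the right I would use the two elementary limits $(\sqrt{\a}-\sqrt{\b})^{-\l}\big((\sqrt{\a}-\sqrt{\b})/(\sqrt{\a}+\sqrt{\b})\big)^{\l}\to(2\sqrt{\b})^{-\l}$ and $(\sqrt{\a}-\sqrt{\b})^{-\l}(\a-\b)^{\l}=(\sqrt{\a}+\sqrt{\b})^{\l}\to(2\sqrt{\b})^{\l}$, together with $\sqrt{\a\b}\to\b$ and $(\a^{-1/2}+\b^{-1/2})^{2\l}\to 2^{2\l}\b^{-\l}$. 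These turn the first term, the $\zeta(-s)$-hypergeometric term, and the $\zeta(\l+1)\zeta(-s-\l)$ term (with $\zeta(-s-\l)=\zeta(-2\l)$) into the stated $\G(\l)\zeta(-\l)$, $\G(2\l+1)\zeta(-\l)$ and $\b^{\l/2}\zeta(\l+1)\zeta(-2\l)$ contributions.

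The one genuinely new computation is the limit of the $\mathcal{A}$-series. Using $\sqrt{\a+it}-\sqrt{\b+it}=(\a-\b)/(\sqrt{\a+it}+\sqrt{\b+it})$ and $\a-\b=(\sqrt{\a}-\sqrt{\b})(\sqrt{\a}+\sqrt{\b})$, a short calculation gives $\lim_{\a\to\b}(\sqrt{\a}-\sqrt{\b})^{-\l}\mathcal{A}(\l,\l,\sqrt{\a},\sqrt{\b},t)=2^{\l}\b^{\l/2}(\b+it)^{-(2\l+1)}$; taking $t=\pm n$ supplies the $(\b\pm in)^{-(2\l+1)}$ summands, while the built-in correction $2^{2s-2\l+1}(\a-\b)^{\l}n^{-(\l+s+1)}\sin(\tfrac{\pi}{2}(\l+s))$ becomes, at $s=\l$ and after the same division and limit, the $2\sin(\pi\l)/n^{2\l+1}$ term that secures convergence via \eqref{ae}. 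Multiplying the resulting identity by $2\G(\l+1)/\pi^{\l}$ (exactly as in the proof of Corollary \ref{dixmolcor}) produces the leading factor $2$ and the $n^{\l/2}$ on the left, and, using $\G(\l+1)=\l\G(\l)$ to absorb the $1/\l$ in the first term, recombines the constants into the compact factors $(2\pi\sqrt{\b})^{-\l}$ and $(2\pi)^{-(2\l+1)}$. This yields the asserted identity for \textup{Re}$(\l)>0$.

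I expect the main obstacle to be the passage to $\l=0$ rather than anything in the range \textup{Re}$(\l)>0$. At $\l=0$ the first term $-\G(\l)\zeta(-\l)(2\pi\sqrt{\b})^{-\l}$ and the term $\b^{\l/2}\zeta(\l+1)\zeta(-2\l)$ each carry a simple pole (from $\G(\l)$ and from $\zeta(\l+1)$ respectively), but since $\zeta(0)=-1/2$ their principal parts are $+\tfrac{1}{2\l}$ and $-\tfrac{1}{2\l}$ and therefore cancel, exactly as in the $\l\to0$ analysis performed for Corollary \ref{7.4}. Hence both sides are analytic across $\l=0$, and analytic continuation in $\l$ extends the identity from \textup{Re}$(\l)>0$ to $\l=0$, completing the proof; everything else reduces to routine bookkeeping of powers of $2$, $\pi$, $\b$ and gamma factors.
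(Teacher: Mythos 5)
Your proposal is correct and takes essentially the same route as the paper, whose entire proof is "let $s=\l$ in Theorem \ref{anathmbdkzac}, divide both sides by $(\sqrt{\a}-\sqrt{\b})^{\l}$, then let $\a\to\b$ and simplify"; your term-by-term limits (the collapse of the ${}_2F_1$ to $1$, the evaluation $\lim_{\a\to\b}(\sqrt{\a}-\sqrt{\b})^{-\l}\mathcal{A}(\l,\l,\sqrt{\a},\sqrt{\b},t)=2^{\l}\b^{\l/2}(\b+it)^{-(2\l+1)}$, and the final multiplication by $2\G(\l+1)/\pi^{\l}$ using \eqref{l}) supply exactly the bookkeeping the paper omits. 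The only cosmetic difference is at $\l=0$: the paper reaches it through the $s=-\l$ clause of Theorem \ref{anathmbdkzac} (which has the same $\G(\l)$--$\zeta(\l+1)$ pole cancellation built in), while you recover it by analytic continuation across $\l=0$; these amount to the same argument.
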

\begin{proof}
Let $s=\l$ in Theorem \ref{anathmbdkzac}, divide both sides of the resulting identity by $(\sqrt{\a}-\sqrt{\b})^{\l}$, then let $\a\to\b$ and then simplify.
\end{proof}
\begin{remark}
Letting $\l\to0$ in the above corollary and observing that
\begin{equation*}
\lim_{\l\to0}\left(-\frac{\G(\l)\zeta(-\l)}{(2\pi\sqrt{\b})^{\l}}+\b^{\frac{\l}{2}}\zeta(\l+1)\zeta(-2\l)\right)=-\frac{1}{2}\g-\frac{1}{4}\log\b,
\end{equation*}
which can be proved along similar lines as \eqref{ell1sab}, we arrive at \eqref{koshvor}. Hence we obtain a yet another new generalization of \eqref{koshvor}, different from \eqref{dixmol} and \eqref{anathmbdkzeqnac00}.
\end{remark}
\begin{remark}
If we let $\l=1/2$ in Corollary \textup{\ref{soninetrans}}, we obtain the special case $s=1/2$ of Corollary \textup{\ref{dixmolcorgen}}.
\end{remark}

\section{Concluding Remarks}\label{cr}
The integral \eqref{fock} has its origins in the work of Fock and Bursian on electromagnetism of alternating current in a circuit with two groundings. Number-theoretic applications of its generalization, due to Koshliakov, namely \eqref{koshfock}, were given in \cite{bdrz}. In particular, integrals with kernel $J_{0}(\rho t)$, or more generally $J_{s}(\rho t)$, can be used in the Vorono\"{\dotlessi} summation formula \eqref{guirknsuminfallk} for $r_k(n)$ to obtain results which are of importance in the study of Gauss circle problem. For example, one such result is \cite[Theorem 1.6]{bdkz}, which gives as a special case, a well-known result of Hardy \cite[Equation (2.12)]{hardyqjpam1915} which he used to obtain his famous omega bound for $\sum_{n\leq x}r_2(n)$, see \eqref{harob}. 

Our work was motivated by the fact that since the Vorono\"{\dotlessi} summation formula for $\sigma_s(n)$, which is instrumental in the study of the generalized Dirichlet divisor problem, involves an integral transform consisting of the kernel $F_{s}(\rho t)$ in \eqref{kk}, it may be of interest to seek an analogue of \eqref{koshfock} with $J_{s}(\rho t)$ replaced by this kernel. We highlight here that there aren't many explicit integral valuations of integrals containing $F_{s}(\rho t)$ in its integrand, see for example, \cite[p.~161]{dixfer3}. The aforementioned analogue is found in \eqref{tfockaeqn} of this paper. It would be very interesting and important to see if \eqref{tfockaeqn} has applications in physics and geology, just like its counterpart in \eqref{koshfock}, or more specifically, in \eqref{fock}.

Equation \eqref{tfockaeqn} was also successfully used in this paper to obtain a general transformation \eqref{anathmbdkzeqn}. As shown in this paper, \eqref{anathmbdkzeqn} and its analytic continuation in \eqref{anathmbdkzeqnac} are rich sources of important number-theoretic identities such as Corollary \ref{dixmolcorgen} and Corollary \ref{ramanujan-type}. Corollary \ref{exotic} is remarkable and is the only instance among identities derived in this paper where the hypergeometric function in \eqref{funa} reduces to a non-trivial closed form. 
Of course, one can generalize our results by replacing $\sigma_s(n)$ by coefficients of those Dirichlet series which satisfy functional equations with two gamma factors.

An important special case of Theorem \ref{anathmbdkzac} is Corollary \ref{ramanujan-type}. This result comes remarkably close towards obtaining a corrected version of the first identity on page $336$ of Ramanujan's Lost Notebook, that is, \eqref{ramin}. Note that Ramanujan's result has signs in the summands of the two infinite series exactly opposite to those in Corollary \ref{ramanujan-type}. For example, the infinite series on the right-hand side of \eqref{ramin} is the same as that on the right-hand side of \eqref{ramanujan-typecor} but with the $+$ sign in the summand of the latter replaced by $-$. The former would then be a special case of the series
\begin{align*}
\sum_{n=1}^{\infty}\sigma_s(n)\bigg\{&I_{\l}\left(2\pi\overline{\epsilon}\sqrt{n}(\sqrt{\a}-\sqrt{\b})\right)K_{\l}\left(2\pi\overline{\epsilon}\sqrt{n} (\sqrt{\a}+\sqrt{\b})\right)\nonumber\\
&-I_{\l}\left(2\pi\epsilon\sqrt{n}(\sqrt{\a}-\sqrt{\b})\right)K_{\l}\left(2\pi\epsilon\sqrt{n}(\sqrt{\a}+\sqrt{\b})\right)\bigg\}
\end{align*}
when $\l=-1/2$, and so it looks like an analysis similar to that carried out in this paper would lead to a corrected version of Ramanujan's formula that matches even in signs than the one we have given in \eqref{ramanujan-type}. Indeed, the natural kernel to work with while tackling this series would be $\sin\left(\frac{\pi s}{2}\right)J_{s}(t)-\cos\left(\frac{\pi s}{2}\right)L_{s}(t)$, where $L_{s}(t):=-\frac{2}{\pi}K_{s}(t)-Y_s(t)$. However, the problem is, we do not know of any summation formula like \eqref{guitra} which would involve this kernel.

It would be worthwhile to look for applications of \eqref{dixmolcorgeneqns0} (resp. \eqref{dixmolcorgeneqn}) towards the Dirichlet divisor problem (resp. the generalized Dirichlet divisor problem). This is because, the $r_2(n)$-analogue of \eqref{dixmolcorgeneqns0} was successfully used by Hardy to derive his famous omega result. Note that Hardy \cite{ddp} and subsequently many others, including Soundararajan \cite{soundararajanimrn}, base their study of the corresponding omega result for $d(n)$ on the conditionally convergent series
\begin{equation*}
\frac{\b^{1/4}}{\pi\sqrt{2}}\sum_{n=1}^{\infty}\frac{d(n)}{n^{3/4}}\cos\left(4\pi\sqrt{n\b}-\frac{\pi}{4}\right),
\end{equation*}
where as the series on the left-hand side of \eqref{dixmolcorgeneqns0} is different. To the best of our knowledge, no one has used an identity of the type \eqref{dixmolcorgeneqns0} in the study of Dirichlet divisor problem.

\begin{center}
\textbf{Acknowledgements}
\end{center}

\noindent
The authors sincerely thank Professor Anna Vishnyakova from V. N. Karazin Kharkiv National University for translating for them the last three pages of \cite{fockbursian}. They also sincerely thank Karrie Peterson, Head, MIT Libraries, for sending them a copy of \cite{schermann}, and Nico M.~Temme for informing them of the reference \cite{dunster}. The first author's research is supported by the SERB-DST grant RES/SERB/MA/P0213/1617/0021. He sincerely thanks SERB-DST for the support.

\end{document}